\documentclass[12pt, reqno]{amsart}
\usepackage{amsmath, amssymb, latexsym, amsthm, amscd, mathrsfs, stmaryrd}
\usepackage{amsfonts, mathrsfs}
\usepackage{mathtools}
\usepackage[title, titletoc, toc]{appendix}
\usepackage[all]{xy}
\usepackage{soul}
\usepackage{tikz}
\usetikzlibrary{arrows}
\usetikzlibrary{calc}
\usepackage{pgfplots}
\pgfplotsset{compat=1.17}
\usepackage{cite}
\usepackage{enumerate}
\usepackage{tikz-cd}
\usepackage{yhmath}
\usepackage{faktor}
\usepackage{xcolor}

\newtheoremstyle{noparens}%
{}{}%
{\itshape}{}%
{\bfseries}{.}%
{ }%
{\thmname{#1}\thmnumber{ #2}\mdseries\thmnote{ #3}}
\theoremstyle{noparens}
\newtheorem{Def}[subsubsection]{Definition}
\newtheorem{rem}[subsubsection]{Remark}
\newtheorem{prop}[subsubsection]{Proposition}
\newtheorem{thm}[subsubsection]{Theorem}
\newtheorem{lem}[subsubsection]{Lemma}
\newtheorem{cor}[subsubsection]{Corollary}

\newcommand{\mbf}{\mathbf}

\newcommand{\mscr}{\mathscr}
\newcommand{\mcal}{\mathcal}
\newcommand{\mbb}{\mathbb}
\newcommand{\mrm}{\mathrm}

\newcommand{\ro}{\mrm{ro}}
\newcommand{\co}{\mrm{co}}

\newcommand{\glln}{\mathfrak{gl}_n}
\newcommand{\slln}{\mathfrak{sl}_n}

\newcommand{\uvsln}{U_{v}(\mathfrak{sl}_n)}
\newcommand{\muvgln}{MU_{v}(\mathfrak{gl}_n)}
\newcommand{\muvsln}{MU_{v}(\mathfrak{sl}_n)}
\newcommand{\mmuvgln}{M\dot{U}_{v}(\mathfrak{gl}_n)}
\newcommand{\mmuvsln}{M\dot{U}_{v}(\mathfrak{sl}_n)}
\newcommand{\Fqd}{\mbb{F}^{d}_q}
\newcommand{\Fqdp}{\mbb{F}^{d'}_q}
\newcommand{\Fqdpp}{\mbb{F}^{d''}_q}
\newcommand{\X}{\mscr{X}_{n, d}}
\newcommand{\Xp}{\mscr{X}_{n,d'}}
\newcommand{\Xpp}{\mscr{X}_{n,d''}}
\newcommand{\MS}{\mcal{MS}_{n,d}}
\newcommand{\MSp}{\mcal{MS}_{n,d'}}

\newcommand{\qMS}{{}_{\mbb Q(v)}\mcal{MS}_{n,d}}
\newcommand{\qMSp}{{}_{\mbb Q(v)}\mcal{MS}_{n,d'}}

\newcommand{\Sd}{\mcal{S}_{n,d}}

\newcommand{\Spp}{\mcal{S}_{n,d''}}
\newcommand{\qS}{{}_{\mbb Q(v)}\mcal{S}_{n,d}}

\newcommand{\qSpp}{{}_{\mbb Q(v)}\mcal{S}_{n,d''}}
\newcommand{\fa}{\mbf a}
\newcommand{\fb}{\mbf b}
\newcommand{\faa}{\bar{\mbf a}}
\newcommand{\fbb}{\bar{\mbf b}}
\newcommand{\ii}{i_{\fb, \fa, w}}
\newcommand{\ip}{i_{\fb', \fa', w'}}
\newcommand{\ipp}{i_{\fb'', \fa''}}
\newcommand{\m}{\mathfrak{m}}
\newcommand{\lam}{\bar{\lambda}}
\newcommand{\muu}{\bar{\mu}}

\usepackage{color}
\newcommand\enc[1]{
	\tikz[baseline=(X.base)] 
	\node (X) [draw, shape=circle, inner sep=0] {$ #1 $};}

\newcommand\senc[1]{
	\tikz[baseline=(X.base)] 
	\node (X) [scale=0.7, draw, shape=circle, inner sep=0] {$ #1 $};}

\allowdisplaybreaks

\title[Positivity of the Mirabolic Quantum Groups]{Positivity of the Mirabolic Quantum Groups}
\author{Zhaobing Fan}
\address{School of Mathematical Sciences\\ Harbin Engineering University, Harbin, China 150001}
\email{fanzhaobing@hrbeu.edu.cn}
\author{Haitao Ma}
\address{School of Mathematical Sciences\\ Harbin Engineering University, Harbin, China 150001}
\email{hmamath@hrbeu.edu.cn}
\author{Zhicheng Zhang}
\address{School of Mathematical Sciences\\ Harbin Engineering University, Harbin, China 150001}
\email{zhichengzhang@hrbeu.edu.cn}

\date{}
\keywords{mirabolic flag variety, mirabolic quantum groups, canonical basis}
\subjclass{17B37, 14L35, 20G43}

\begin{document}
	\begin{abstract}
	   In this paper, we construct the canonical basis for the modified mirabolic quantum $\slln$. 
      We then study the associated $\dot{U}_v(\slln)$-module and $\dot{U}_v(\slln)$-comodule structures 
      and prove the positivity of this basis with respect to these two structures.
	\end{abstract}

\maketitle
\tableofcontents

\section{Introduction}

    The construction of canonical bases, together with the positivity phenomena they encode, is one of the central themes in the representation theory of quantum groups. A fundamental result is provided by the work of Beilinson, Lusztig, and MacPherson\cite{BLM90}. Using the geometry of pairs of partial flag varieties of type $A$, they realized the $q$-Schur algebra $\Sd$ as a convolution
    algebra. The intersection cohomology complexes of orbit closures give rise to a canonical basis of $\Sd$. More importantly, the geometric interpretation of convolution implies that the corresponding structure constants belong to $\mathbb N[v,v^{-1}]$.

    The BLM realization goes further. The multiplication formulas for the $q$-Schur algebras exhibit a remarkable stability as $d$ varies, 
    and the resulting stabilization realizes the modified quantum group $\dot{U}_v(\glln)$. It also produces a distinguished bar-invariant basis, usually referred to as the
    BLM basis. This basis retains the familiar formal features of a canonical basis, including bar-invariance and unitriangularity with respect to the standard basis. 
    Nevertheless, Li and Wang \cite{LW18} showed that the structure constants of the BLM basis of $\dot{U}_v(\glln)$ need not be positive. In fact, negative structure
    constants already occur in rank two. 

    This phenomenon is in contrast with the canonical basis of $\dot{U}_v(\slln)$. 
    Positivity of the structure constants of canonical bases of modified quantum groups belongs to the general positivity conjectures
    formulated by Lusztig \cite[25.4.2]{L93}.  In type $A$, the positivity for $\dot{U}_v(\slln)$ can be deduced from the positivity of the canonical
    bases of $q$-Schur algebras together with the compatibility of transfer maps, as developed in the work of McGerty\cite{Mc12}. 

    This foundational work \cite{BLM90} has been extended in several directions including  quantum symmetric pairs. 
    Geometric Schur dualities associated with partial flag varieties of classical type lead to Schur-type algebras and modified coideal subalgebras,
    together with their canonical and BLM bases \cite{BKLW18,FL15}.  As in type $A$, the canonical bases of $\imath$Schur algebras have positivity properties.  
    On the other hand, Li and Wang \cite{LW18} showed that the BLM basis of the modified coideal algebra of $\glln$ again has negative structure constants.  
    Motivated by this failure, they constructed instead a canonical basis for the modified coideal subalgebra of quantum $\slln$. This basis has positive structure constants and enjoys further positivity properties with respect to a geometric bilinear form and its action on tensor space.
    The positivity theory was further developed by Fan and Li \cite{FL21}. They proved the positivity of the canonical basis of modified quantum $\slln$ with respect to comultiplication and established the same positivity of the $\imath$-canonical basis.  

    The affine analogues of the BLM realization were developed independently by Ginzburg and Vasserot \cite{GV93} and by Lusztig \cite{Lu99}.
    McGerty \cite{Mc12} constructed the canonical basis, which is identified with Lusztig's canonical bases, and established the positivity of the structure constants of multiplication. 
    Positivity with respect to comultiplication was further established by Fan and Li \cite{FL21}. Fu and Shoji also proved the positivity of multiplication and comultiplication in affine type $A$ \cite{FS18, Fu21}. Recently, Fang and He developed a new realization to establish the positivity of canonical basis of the modified form of quantum groups with symmetric Cartan datum\cite{FH25}.
    The corresponding theory for affine quantum symmetric pairs was developed by
    Fan, Lai, Li, Luo, and Wang \cite{FLLLW20,FLLLW23}, who constructed BLM realizations associated with affine flag varieties and
    affine Hecke algebras of classical type, together with monomial and canonical
    bases and various positivity properties. Further developments include the multiplication formulas of Fan and Li \cite{FL19} and, more recently, the affine
    type $D$ construction of Chen, Fan and Wang \cite{CFW25,CF26}, where canonical bases for idempotented $\imath$-quantum groups and their positivity properties are obtained.

	Another natural extension in the exploration of quantum groups involves venturing into the `mirabolic' setting. 
    The mirabolic setting is of independent interest, in part because of the close relationship between $\mathscr D$-modules on $GL_d(\mathbb C)\times V$ and spherical trigonometric Cherednik algebras \cite{FG10}. Rosso initiated a mirabolic version of the BLM construction in \cite{R18}. He introduced the mirabolic $q$-Schur
    algebra $\MS$ defined as the convolution algebra of $GL_d$-invariant functions on $\X\times\X\times V$. The $GL_d$-orbits in $\X\times\X\times V$ are parametrized by decorated matrices $(A,\Delta)$, and the intersection cohomology complexes of their closures give rise to a distinguished basis $\bigl\{\{A\}_{\Delta}\bigr\}$
    of $\mathcal{MS}_{n,d}$. More recently, Goyal and Rosso \cite{GR25} constructed a $U_v(\slln)$-comodule structure on the mirabolic quantum group to study the representation theory of it, while the stabilization of the multiplication formulas for $\mathcal{MS}_{n,d}$ was established in \cite{FZM25}. The latter construction produces the modified mirabolic quantum $\glln$, denoted by $\mmuvgln$.

    However, the positivity properties of canonical bases in the mirabolic setting remain largely unexplored. In the ordinary $q$-Schur algebra, the Decomposition Theorem of perverse sheaves applied to the relevant convolution product yields positivity of multiplication in the canonical basis. In the mirabolic setting, however, the convolution
    product defining the algebra multiplication does not satisfy the condition of the Decomposition Theorem (cf.\cite{R18} for more details). 
    Consequently, the existence of an intersection cohomology basis does not by itself guarantee that its multiplication coefficients belong to $\mathbb N[v,v^{-1}]$.

    This leads to a natural question. With respect to which natural algebraic structures does the resulting basis satisfy positivity?
    Our starting point is the natural embedding
    \[
   \Sd\hookrightarrow\mathcal{MS}_{n,d}.
    \]
    Through convolution, this embedding equips $\mathcal{MS}_{n,d}$ with an $\Sd$-bimodule structure. 
    Our first result shows that the coefficients of this module action are positive with respect to the canonical bases of $\Sd$ and $\mathcal{MS}_{n,d}$. 
    \begin{prop}(Proposition \ref{pos-cb-schur})
        For any canonical basis elements $b\in \Sd$ and canonical basis elements $m\in \MS$
        $$
       b\ast m=\sum_{m'}c_{b,m}^{m'}m',
        $$
        $$
         m\ast b =\sum_{m'}c_{m, b}^{m'}m',
        $$
        where $m'$ runs over all canonical basis elements of $\MS$.
        The coefficients $c_{b,m}^{m'}$ and $c_{m, b}^{m'}\in \mathbb{N}[v, v^{-1}]$.
    \end{prop}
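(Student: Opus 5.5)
The plan is to prove the positivity geometrically, applying the Decomposition Theorem to a convolution diagram that, unlike the one defining multiplication in $\MS$, turns out to be proper. I will treat $b\ast m$; the case $m\ast b$ is symmetric, obtained by reversing the roles of the two flag factors, or by the anti-involution swapping the factors of $\X\times\X$.

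First I would fix the sheaf-function dictionary over $\overline{\mbb F}_q$. The canonical basis element $b=\{B\}$ of $\Sd$ is the function attached to the IC complex $\mathrm{IC}(\overline{\mcal O_B})$ on $\X\times\X$. The element $m=\{A\}_\Delta$ is attached to the IC complex of the closure of the $G=GL_d$-orbit $\mcal O_{A,\Delta}\subset \X\times\X\times V$. Both complexes are pure of weight zero, and their Frobenius traces equal the functions up to the normalization $v^{-\dim}$.

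Next I would describe the embedding $\Sd\hookrightarrow \MS$ and the resulting action as a convolution of the form
\[
(f\ast g)(F_1,F_3,u)=\sum_{F_2\in\X} f(F_1,F_2)\,g(F_2,F_3,u),
\]
possibly after normalization. In the paper's definition the vector lies with the flags in a fixed way, and I would verify that the $\Sd$-factor carries no vector dependence, or only the trivial dependence $u\in F_1$ type, which is what makes this work. The corresponding sheaf operation is
\[
p_{13*}\bigl(p_{12}^*\mathrm{IC}_B\otimes p_{23}^*\mathrm{IC}_{A,\Delta}\bigr)
\]
on $\X\times\X\times\X\times V$. Here the projection $p_{13}$ forgets the middle flag $F_2$, and its fibres are closed subvarieties of the projective partial flag variety $\X$, so $p_{13}$ is proper.

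This properness is the key point. The failure of the Decomposition Theorem in the mirabolic product comes from the vector coordinate, which must be integrated out over a non-proper fibre, and that does not occur here. Having established properness, I would apply the Decomposition Theorem of Beilinson--Bernstein--Deligne--Gabber together with purity: the external tensor product of two pure IC complexes, pulled back smoothly, is pure. Its proper pushforward is therefore a direct sum of shifted $G$-equivariant simple perverse sheaves. These are IC complexes of $G$-orbits on $\X\times\X\times V$, and since there are finitely many orbits they are exactly the $\mathrm{IC}(\overline{\mcal O_{A',\Delta'}})$ with trivial local systems, the stabilizers being connected. The multiplicity of each summand, recorded as a Laurent polynomial in shifts, lies in $\mbb N[v,v^{-1}]$. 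Taking traces of Frobenius then gives $c_{b,m}^{m'}\in\mbb N[v,v^{-1}]$.

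The main obstacle I expect is the bookkeeping rather than the geometry. I need to match the algebraic normalization of the bimodule action, including the powers of $v$ in the embedding $\Sd\hookrightarrow\MS$, with the sheaf convolution, so that the shifts align and no signs appear. I would check this by comparing dimension shifts, using a Tate twist of weight compatible with $v=\sqrt q$. The other point to confirm is that the equivariant simple objects carry only trivial local systems, and I would verify this from the stabilizer description of the orbits in \cite{R18}.
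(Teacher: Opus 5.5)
Your proposal is correct and follows essentially the same route as the paper. Since the $\Sd$-factor is supported on the zero vector, the bimodule convolution only sums over the middle flag. The corresponding projection (the paper's $p_3$ restricted to $w=0$, resp.\ $w'=0$) is therefore proper, and the Decomposition Theorem gives coefficients in $\mathbb{N}[v,v^{-1}]$.

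The extra points you flag are exactly the details the paper leaves implicit: purity of the twisted product, connected stabilizers so that only constant local systems occur, and matching the $v$-normalizations and signs.
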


    In the BLM realization, a monomial basis of modified form arising from generating process and stabilization is used to construct the BLM basis (see, for example, \cite{BLM90}, \cite{BKLW18}). The generating process introduced in \cite{FZM25} does yield a monomial basis of $\mathcal{MS}_{n,d}$ and, after stabilization, a monomial basis of $\mmuvgln$. However, the coefficient of its leading term may be a nontrivial power of $v$. Consequently, although this construction still yields a monomial basis of $\mmuvgln$, it is not suitable for constructing a BLM basis because it fails to satisfy bar-invariance. 
    Using the multiplication formulas established in \cite{FZM25}, we introduce a new generating process for $\mathcal{MS}_{n,d}$ under which every monomial element has leading coefficient $1$. After stabilization, these monomial elements give rise to a suitably normalized monomial basis of $\mmuvgln$. An argument similar to that in \cite{L90} then produces a BLM basis of $\mmuvgln$. We further determine the relation between this BLM basis and the canonical bases of mirabolic $q$-Schur algebras.

    The above construction nevertheless exhibits the same phenomenon as in \cite{LW18}: the BLM basis of $\mmuvgln$ does not satisfy positivity with respect to the natural
    $\dot U_v(\glln)$-module structure. We therefore pass to the modified mirabolic quantum $\slln$, denoted by $\mmuvsln$. To construct its canonical basis, we define transfer
    maps between mirabolic $q$-Schur algebras and give a geometric construction of an inner product on $\mathcal{MS}_{n,d}$. We establish the compatibility of the canonical bases with both the transfer maps and the inner product. Relating $\mmuvsln$ to $\mmuvgln$, we obtain surjective homomorphisms
    \[
   \phi_d^{\m}:\mmuvsln\longrightarrow\mathcal{MS}_{n,d}.
    \]
    These maps allow us to construct a canonical basis $\mathbf{M\dot{B}}$ of $\mmuvsln$ which is compatible with the intersection cohomology bases of the mirabolic $q$-Schur algebras. 
    The positivity of canonical basis of $\MS$ described above then implies positivity of the coefficients of the $\dot U_v(\slln)$-module action on $\mmuvsln$.
    \begin{thm}(Theorem \ref{p-module})
        For any $b\in \mathbf{\dot{B}}$ and $m\in \mathbf{M\dot{B}}$, we write
        $$
        b\cdot m=\sum_{m'\in \mathbf{M\dot{B}}}P^{m'}_{b, m}m'.
        $$
        where $P^{m'}_{b, m}=0$ for all but finitely many $m'$.
        Then we have $P^{m'}_{b, m}\in \mathbb{N}[v,v^{-1}]$.
    \end{thm}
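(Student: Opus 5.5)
The plan is to reduce the positivity in $\mmuvsln$ to the positivity of the $\Sd$-bimodule action on $\MS$ from Proposition \ref{pos-cb-schur}, using the surjections $\phi_d^{\m}:\mmuvsln\to\MS$. The transfer maps make these surjections compatible with canonical bases, and we use the analogous compatibility on the $\dot U_v(\slln)$ side. Concretely, we use two facts established before this theorem. First, for each $m\in\mathbf{M\dot{B}}$ and each $d$ with $d\equiv$ the residue encoded by $\m$ and $d\gg0$, $\phi_d^{\m}(m)$ is either $0$ or a canonical basis element $\{A\}_\Delta$ of $\MS$. Second, for any finite set of elements of $\mathbf{M\dot{B}}$ there is $d$ large enough that $\phi_d^{\m}$ is injective on them and sends them to distinct canonical basis elements. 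On the quantum group side, McGerty's result says that the map $\phi_d:\dot U_v(\slln)\to\Sd$ sends $b\in\mathbf{\dot B}$ to a canonical basis element of $\Sd$ or to $0$. We also need that $\phi_d^{\m}$ intertwines the module actions:
\[
\phi_d^{\m}(b\cdot m)=\phi_d(b)\ast\phi_d^{\m}(m).
\]
This should follow from the construction of $\mmuvsln$ as a $\dot U_v(\slln)$-module via stabilization of the convolution $\Sd\times\MS\to\MS$.

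Fix $b$ and $m$, and expand $b\cdot m=\sum_{m'}P^{m'}_{b,m}m'$, a finite sum. Choose $d$ (in the right congruence class) so large that three things hold: $\phi_d(b)\neq 0$, $\phi_d^{\m}(m)\neq0$, and $\phi_d^{\m}$ maps the finitely many $m'$ with $P^{m'}_{b,m}\neq0$ injectively to distinct canonical basis elements of $\MS$. Applying $\phi_d^{\m}$ and the intertwining property gives
\[
\phi_d(b)\ast\phi_d^{\m}(m)=\sum_{m'}P^{m'}_{b,m}\,\phi_d^{\m}(m').
\]
The left-hand side is a product of a canonical basis element of $\Sd$ and one of $\MS$. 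By Proposition \ref{pos-cb-schur} its expansion in the canonical basis of $\MS$ has coefficients in $\mathbb N[v,v^{-1}]$. Since the $\phi_d^{\m}(m')$ are distinct canonical basis elements, comparing coefficients identifies each $P^{m'}_{b,m}$ with some $c^{m''}_{\phi_d(b),\phi_d^{\m}(m)}$, which lies in $\mathbb N[v,v^{-1}]$.

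The main obstacle is the compatibility input. We need that $\phi_d^{\m}(\mathbf{M\dot B})\subset\{\text{canonical basis of }\MS\}\cup\{0\}$, and that the nonzero images separate any given finite set of elements for $d$ large. This relies on the transfer maps $\MS\to\mcal{MS}_{n,d-n}$ sending canonical basis elements to canonical basis elements or $0$. I would prove that by the geometric argument of McGerty and Lusztig: realize the transfer map via a hyperbolic-localization or restriction functor, which preserves semisimple pure complexes. I would then use the inner product to characterize $\mathbf{M\dot B}$ as the bar-invariant elements that are almost orthonormal, which forces the images to be $\pm$ canonical basis elements. The sign is fixed by unitriangularity with respect to the standard basis. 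If this compatibility is already part of the construction of $\mathbf{M\dot B}$, the theorem reduces to the comparison above.
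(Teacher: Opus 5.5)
Your proposal follows the paper's proof: you identify $b$ with some $b_{\widehat{(A'',\emptyset)}}$, push the identity through the algebra homomorphism $\phi^{\m}_d$ for $d\gg0$ using Proposition~\ref{im of cb-sln}, and read off positivity from Proposition~\ref{pos-cb-schur}. One correction is needed: $\phi^{\m}_d(b_{\widehat{(A,\Delta)}})$ equals $v^{-\vartheta(\Delta)\lfloor d/n\rfloor}\{A\}_{\Delta,d}$, not a canonical basis element on the nose. Likewise the transfer map gives $v^{\vartheta(\Delta)}\{{}_{p-1}A\}_{\Delta,d-n}$, so your proposed sign-fixing argument would have to track these powers of $v$. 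As a result the comparison yields $P^{m'}_{b,m}=v^{p(\vartheta(\Delta')-\vartheta(\Delta))}c$ with $c$ a structure constant of the $\Sd$-action, and this still lies in $\mathbb N[v,v^{-1}]$.
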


    We next study the corresponding comodule structure. We first geometrically construct a comodule structure on $\mathcal{MS}_{n,d}$. 
    The positivity of its coefficients with respect to the intersection cohomology bases is established by using
    Braden's hyperbolic localization theorem \cite{B03}. 
    \begin{prop}(Proposition \ref{pos-cmod})
    For canonical basis elements $m\in \MS$, and each component $\rho$ of the comodule map, we have
    $$
    \rho(m)=\sum_{m', b}c^{ b, m'}_{m}b\otimes m',
    $$
     where $b$ and $m'$ runs over all canonical basis elements of $\Spp$ and $\MSp$, respectively.
     The coefficient $c^{ b, m'}_{m} \in \mathbb{N}[v, v^{-1}]$.   
    \end{prop}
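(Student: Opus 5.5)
We will realize each component $\rho$ of the comodule map geometrically and then read off positivity from Braden's hyperbolic localization theorem \cite{B03}, following the strategy used for comultiplication in the ordinary and affine settings \cite{FL21}.

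\textbf{Geometric model.} Fix a decomposition $d=d'+d''$ and a splitting $\mbb F^d=\mbb F^{d''}\oplus\mbb F^{d'}$, chosen so that the vector $V$-coordinate lives on the $\mbb F^{d'}$ side. This is what makes the first tensor factor lie in $\Spp$ and the second in $\MSp$. Let $\chi:\mbb G_m\to GL_d$ be the cocharacter acting by weight $1$ on $\mbb F^{d''}$ and weight $0$ on $\mbb F^{d'}$.
\begin{enumerate}
\item The fixed locus of $\chi$ on $\X\times\X\times V$ is a disjoint union of pieces of the form $\mscr X_{n,d''}\times\mscr X_{n,d''}\times \mscr X_{n,d'}\times\mscr X_{n,d'}\times\mbb F^{d'}$. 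The pieces are indexed by the dimension vectors of the intersections with $\mbb F^{d''}$.
\item The attracting set is $Z^+=\{(F,F',u): F,F' \text{ compatible with } \mbb F^{d''}\}$, with $u$ constrained appropriately. It carries a projection $\pi: Z^+\to$ (fixed locus), given by taking sub and quotient.
\end{enumerate}
We will check that the algebraic component $\rho$, defined earlier via restriction to $\iota:Z^+\hookrightarrow \X\times\X\times V$ followed by pushforward $\pi_!$ with the usual shift and twist, agrees with the function-theoretic formula at the level of the sheaf–function dictionary.

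\textbf{Step 1: the functor is hyperbolic localization.} We identify $\rho$ on sheaves with $\pi_!\iota^*$, suitably shifted, applied to $GL_d$-equivariant complexes. This is exactly Braden's hyperbolic localization functor $(-)^{!*}$ for the $\mbb G_m$-action $\chi$. The action of $\chi$ commutes with the Levi $GL_{d''}\times GL_{d'}$, so the output is equivariant for this Levi. Moreover $GL_d$-orbits are $\chi$-stable, so $IC_{(A,\Delta)}$ is $\mbb G_m$-equivariant, which is the hypothesis Braden requires.

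\textbf{Step 2: Braden's theorem.} Braden shows that hyperbolic localization preserves purity of weight zero: it sends pure complexes to pure complexes of the same weight. Over $\overline{\mbb F}_q$ this needs the $\mbb G_m$-equivariance from Step 1 and normality is not required. By the decomposition theorem, $(IC_{(A,\Delta)})^{!*}$ is then a direct sum of shifted IC complexes on the fixed locus. Those IC complexes are external products of $IC$'s of $GL_{d''}$-orbits on $\mscr X_{n,d''}^2$ with $IC$'s of $GL_{d'}$-orbits on $\mscr X_{n,d'}^2\times\mbb F^{d'}$. We use here that the fixed locus is a product of the two relevant varieties with only finitely many Levi orbits.

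\textbf{Step 3: conclusion.} Taking trace of Frobenius, the pure shifts contribute $v^{k}$ with nonnegative multiplicities. Hence each coefficient $c^{b,m'}_m$ is a sum of powers of $v$, with $b=\{B\}$ and $m'=\{A'\}_{\Delta'}$. Finally we match the normalizations, namely the shift by $\dim$ of orbits and the twist by the half-dimension of the fibers of $\pi$, with those in the definition of $\rho$ and of the canonical bases.

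\textbf{Main obstacle.} The main obstacle is Step 1. We must verify that the algebraically defined $\rho$, with its specific powers of $v$, coincides exactly with $\pi_!\iota^*$, and not with a variant such as $\pi_*\iota^!$ with different twists. We must also check that the choice of placing $V$ on the $\mbb F^{d'}$ side really produces the attracting set, so that Braden's theorem applies. We would first compute both sides on the standard basis $[A]_\Delta$, where $\pi$ restricted to an orbit is an affine-space fibration whose fiber dimension gives the twist. Only after that would we pass to IC complexes.
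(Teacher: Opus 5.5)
Your proposal is correct and follows the paper's argument: identify each component of $\tilde\rho$ with the function version of Braden's hyperbolic localization for $t\mapsto(1_{d'},t1_{d''})$, then get positivity from purity, semisimplicity and the sheaf--function dictionary. The one difference is where the localization is applied.

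The paper applies it on the slice $\X(\fa)\times V$ with the first flag fixed and $P_{\fb}$-equivariance (Lemma \ref{hyper}). This matches the fixed-$\hat f$ definition of $\tilde\rho$ and the normalization $v^{-d(A,\Delta)+r(A,\Delta)}$ exactly. On the full product $\X\times\X\times V$, your $\pi_!\iota^*$ also sums over $\hat f\in Z_{\hat f',\hat f''}$. By the $\hat f$-independence of $\tilde\rho$, it therefore equals $\sharp Z_{\hat f',\hat f''}\cdot\tilde\rho$, which differs by a power of $q$ and so does not affect positivity.

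One correction: no constraint on the vector is needed. As $t\to 0$ all of $V$ is attracted to $\mbb F^{d'}$, and only $\pi'(w)$ is recorded.
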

    This positivity also implies the positivity of the transfer map. 
    
    Passing to the modified setting, the $U_v(\slln)$-comodule structure on $\muvsln$ induces a $\dot U_v(\slln)$-comodule structure on $\mmuvsln$.
    We then prove the positivity of its coefficients with respect to the canonical bases via the maps $\phi_d^{\m}$.
    \begin{thm}(Theorem \ref{pos-cmod-sln})
    Let $m\in \mbf{M\dot{B}}$. For the $\dot U_v(\slln)$-comodule structure map $\rho$, we write
    $$
    \rho(m)=\sum_{b\in \mathbf{\dot{B}}, m'\in \mathbf{M\dot{B}}} C_{m}^{b,m'}b\otimes m'.
    $$
    Then $C_{m}^{b,m'}\in \mbb N[v, v^{-1}]$.
    \end{thm}

    The paper is organized as follows. In Section 2, we recall the construction of the mirabolic $q$-Schur algebra $\MS$ and its canonical basis.
    In Section 3, we construct the BLM basis of the modified mirabolic quantum $\glln$. In Section 4, we construct the canonical basis of the modified mirabolic quantum $\slln$. In Section 5, we prove the positivity properties of the canonical basis of $\mmuvsln$.
\subsection*{Acknowledgement} Z. Fan was partially supported by the National Natural Science Foundation of China (No.12631002). 
The authors used AI tools to polish the exposition, ensure consistency of notation, and assist with proofreading. 
The authors take full responsibility for the content of this paper.
\section{Mirabolic q-Schur algebra $\MS$}
\subsection{Canonical Basis of $\MS$}
Let $v$ be an indeterminate, $\mathcal{A}=\mbb Z[v,v^{-1}]$ and $\mathbb{F}_q$ be a finite field with $q$ elements. 
Let $V$ be a $d$-dimensional vector space over $\mbb F_q$ and $G=GL(V)$.

Whenever orbit closures and intersection cohomology complexes are considered, all varieties are implicitly base changed to the algebraic closure $\overline{\mathbb{F}}_q$ of $\mathbb{F}_q$.

For a fixed positive integer $n$, let $\mathscr{X}_{n,d}$ be the set of all $n$-step partial flags in $V$, i.e., 
$$
\mathscr{X}_{n,d}=\{f=(0=V_0\subseteq V_1\subseteq \cdots \subseteq V_{n-1}\subset V_n=V)\}.
$$

Recall the definition of decorated matrices.
\begin{Def}\label{dec mat}
  A decorated matrix is a pair $(A, \Delta)$, where $A=(a_{i j})\in Mat_{n\times n}(\mathbb{N})$ 
  and $\Delta =\{(i_1, j_1), \cdots, (i_k, j_k)\}$ is a set that satisfies
  $$
  1\leq  i_1< \cdots < i_k \leq  n , \ 1\leq j_k< \cdots< j_1 \leq n, 
  $$
  with the additional condition that for all $(i, j)\in \Delta$, the entry $a_{i j}>0$. We denote by $\Xi_{n|n}$ the set of $n\times n$ decorated matrices and 
  $\Xi_{n|n,d}$ is defined as $\{(A, \Delta)\in \Xi_{n|n} | \sum_{i j}a_{i, j}=d \}$.
\end{Def}

By \cite{MWZ}, we have the bijection between the set of $G$-orbits of $\mathscr{X} \times \mathscr{X}\times V$ and $\Xi_{n|n,d}$, i.e., 
$$
G \backslash  \mathscr{X}_{n,d}\times \mathscr{X}_{n,d}\times V \longleftrightarrow \Xi_{n|n,d}
.$$
We denote by $\mathcal{O}_{A, \Delta}$ the orbit corresponding to $(A, \Delta)$.

The mirabolic $q$-Schur algebra $\mcal{MS}_{n, d}=\mcal A_{G}(\mscr X_{n, d}\times \mscr X_{n,d}\times V)$ is a convolution algebra with the following convolution product:
For any $g,h\in \mathcal{MS}_{n, d}$,
$$
h\ast g(f,f',w)=\sum_{(f'',\mu)\in \mathscr{X}_{n,d}\times V}h(f,f'',\mu)g(f'',f',w-\mu)
.$$
The characteristic functions of the $G$-orbits form an $\mcal A$-basis of $\MS$: $\{e_{A, \Delta}\ |\ (A, \Delta)\in \Xi_{n|n,d} \}$, where $e_{A, \Delta}$ is the characteristic function of the orbit $\mathcal{O}_{A, \Delta}$. 

Take specilization $v^2=q$ and let $d(A, \Delta)=\dim \mathcal{O}_{A, \Delta}$ and $r(A, \Delta)$ denote the dimension of the image of $\mathcal{O}_{A, \Delta}$ under the first projection $pr_1: \mathscr{X}\times \mathscr{X}\times V\to \mathscr{X}$. Then we define $[A]_{\Delta}=v^{-d(A,\Delta)+r(A, \Delta)}e_{A,\Delta}$.

We define a partial order on $\Xi_{n|n, d}$ by $(A,\Delta)\preceq (A', \Delta')$ if $\mcal O_{A, \Delta}\subset \overline{\mcal O_{A', \Delta'}}$ and $(A,\Delta)\prec (A', \Delta')$ if $(A,\Delta)\preceq (A', \Delta')$ and $(A,\Delta)\neq (A', \Delta')$. 
Consider the intersection cohomology complex $IC_{A, \Delta}$ of the closure $\overline{\mcal O_{A, \Delta}}$. Then we define the polynomial in $\mathcal{A}$
$$
P_{(A, \Delta), (A', \Delta')}=\sum_{i}\mathrm{rank} H^{i}(IC_{A,\Delta}|_{\mcal{O}_{A', \Delta'}})v^{i-d(A,\Delta)+d(A',\Delta')}
$$
By definition, we can see 
\begin{itemize}
    \item $P_{(A, \Delta), (A', \Delta')}=0$ unless $(A',\Delta')\preceq (A,\Delta)$;
    \item $P_{(A,\Delta), (A,\Delta)}=1$;
    \item $P_{(A, \Delta), (A', \Delta')}\in v^{-1}\mathbb{Z}[v^{-1}]$ for any $(A', \Delta')\prec (A,\Delta)$.
\end{itemize}

Consider new elements
$$\{A\}_{\Delta}=\sum_{(A',\Delta')\preceq (A,\Delta)}P_{(A, \Delta), (A', \Delta')}[A']_{\Delta'}.$$
Then we can see that $\{\{A\}_{\Delta}\ |\ (A,\Delta)\in \Xi_{n|n, d}\}$ forms a $\mathcal{A}$-basis of $\MS$ and we call it a canonical basis of $\MS$.

Besides, the Verdier duality implies that there is a unique homomorphism of abelian groups $^-:\mathcal{MS}_{n, d}\to \MS$ such that 
$\bar{v}=v^{-1}$ and $\overline{x\{A\}_{\Delta}}=\overline{x}\{A\}_{\Delta}$ for all $(A,\Delta)\in \Xi_{n|n,d}$ and $x\in \mcal A$. 

Recall in \cite{BLM90}, the $q$-Schur algebra $\Sd$ can be realized as a convolution algebra on $\mcal A_{G}(\X\times \X)$. 
The basis of $\Sd$ can be indexed by $A\in \Theta_d$, where $\Theta_d=\{A\in Mat_{n\times n}(\mbb N)\mid \sum_{i,j}a_{ij}=d\}$.
By this realization, the algebra $\Sd$ is naturally a subalgebra of $\MS$ by regarding $A\in \Theta_d$ as $(A,\emptyset)\in \Xi_{n|n,d}$.
Therefore for any $h\in \Sd$, $g\in \MS$ and $(f, f',w)\in \X\times \X\times V$, the convolutions
\begin{align}
    h\ast g(f,f',w)&=\sum_{f''\in \mathscr{X}_{n,d}}h(f,f'')g(f'',f',w)\\
    g\ast h(f,f',w)&=\sum_{f''\in \mathscr{X}_{n,d}}g(f,f'',w)h(f'',f')
\end{align}
endow $\MS$ with a $\Sd$-bimodule structure.

By \cite{BLM90}, the algebra $\Sd$ admits the canonical basis $\{\{A\}\mid A\in \Theta_d\}$ and a bar involution defined by $\overline{x\{A\}}=\bar{x}\{A\}$. 
By the construction of the canonical basis of $\Sd$ and $\MS$, $\bar{}$ is a $\Sd$-bimodule homomorphism on $\MS$.

\section{The BLM basis of $\mmuvgln$}
\subsection{Monomial Basis of $MS$}

    We recall a partial order on $\Xi_{n|n,d}$ introduced  in \cite{FZM25}. We begin with the partial order on the $\Delta$.
    \begin{Def}
        We write $\Delta' \leq \Delta$ if, for every $(i, j)\in \Delta'$, there exists $(k,l)\in \Delta$ such that $i\leq k$ and $j\leq l$.
        We write $\Delta' < \Delta$ if $\Delta' \leq \Delta$ and $\Delta'\neq \Delta$.
    \end{Def} 

    For an $(A,\Delta)\in \Xi_{n|n,d}$, define its row-sum and column-sum vectors by
	$$
	\ro(A)=(\sum\limits_{1\leq j \leq n}a_{1j}, \cdots, \sum\limits_{1\leq j \leq n}a_{nj}),
	\ 
	\co(A)=(\sum\limits_{1\leq i \leq n}a_{i1}, \cdots, \sum\limits_{1\leq i \leq n}a_{in}).
	$$   
    Thus $\ro(A)_i=\sum\limits_{1\leq j \leq n}a_{ij}$ and $\co(A)_j=\sum\limits_{1\leq i \leq n}a_{ij}$.

    Recall the Bruhat order $\leq $ on $\Theta_d$ in \cite[3.5]{BLM90}, we define the partial order on $\Xi_{n|n, d}$.
  \begin{Def}
    Let $(A, \Delta), (A', \Delta')\in \Xi_{n|n,d}$, and suppose $\co(A)=\co(A')$, $\ro(A)=\ro(A')$. 
    We write $(A', \Delta')\leq (A,\Delta)$ if $A'< A$ or $A'=A$ and $\Delta'\leq \Delta$.
    We write $(A', \Delta')<(A, \Delta)$ if $(A', \Delta')\leq (A, \Delta)$ and $(A',\Delta') \neq  (A,\Delta)$.
  \end{Def}

  We denote by ``$[A]_{\Delta}+\text{ lower terms}$'' an element of $\MS$ 
  which equals $[A]_{\Delta}$ plus a $\mathcal{A}$-linear combination of elements $[A']_{\Delta'}$ with $(A',\Delta')\in \Xi_{n|n, d}$ and $(A',\Delta')<(A,\Delta)$.

    For a diagonal matrix $D=\operatorname{diag}(d_1,\ldots,d_n)\in \Theta_d$ and a integer $1\le h\le n$, 
    such that $(D,\{(h,h)\})\in\Xi_{n|n,d}$, we define
    \[
    L_{D,h}=
    \sum_{i=1}^{h}
    v^{-\sum_{k=i+1}^{h}d_k}
    [D]_{\{(i,i)\}}
    +
    v^{-\sum_{i=1}^{h}d_i}
    [D]_{\emptyset}.
    \]
    Here $[D]_{\{(i,i)\}}=0$ whenever $d_i=0$.


\begin{lem}\label{leading}
Assume $1\leq h< n$, $1\leq r\leq n$. Let $(A, \Delta)\in \Xi_{n|n,d}$, where $\Delta=\{(i_1, j_1), \cdots, (i_k, j_k)\}$. 
Define
\[
j_*=
\begin{cases}
j_2,& k\geq2,\\
0,& k=1,
\end{cases}
\qquad
\mathcal J
 =
 \{t\in\mathbb N\mid j_*<t<r,\ a_{h+1,t}>0\}.
\]
If $\mcal J\neq \emptyset$, we set $l=\max \mcal J$.
\begin{enumerate}[ $(a)$ ]
    \item Suppose $(A, \Delta)$ satisfies the following conditions:
    $$a_{hj}=0, \forall j\geq r;\quad a_{h+1, r}=R;\quad a_{h+1, j}=0, \forall j>r$$
    and
    $$(i_1, j_1)=(h+1, r).$$
    Let $(B,\emptyset)\in \Xi_{n|n,d}$ such that $B-RE_{h, h+1}$ is a diagonal matrix and $\co(B)=\ro (A)$. Then
    \begin{align*}
        [B]_{\emptyset}\ast [A]_{\Delta}=[M]_{\Delta'}+ \text{lower terms},
    \end{align*}
    where $M=A+R(E_{h,r}-E_{h+1,r})$ and 
    $$\Delta'=
    \begin{cases}
        (\Delta\setminus \{(h+1,r)\})\cup\{(h,r)\} & \text{ if } \mcal J=\emptyset;\\
        (\Delta\setminus \{(h+1,r)\})\cup\{(h,r), (h+1,l)\}& \text{ if } \mcal J\neq \emptyset.
    \end{cases}
   $$
     \item Suppose $(A, \emptyset)\in \Xi_{n|n,d}$, satisfies the following conditions:
    $$a_{ij}=0, \forall i\leq h, j>r, a_{hr}\neq 0.$$
    Let $(D,\{(h, h)\}), (A, \emptyset)\in \Xi_{n|n,d}$, such that $D$ is a diagonal matrix and $\co(D)=\ro (A)$. Then
    \begin{align*}
        [D]_{\{(h, h)\}}\ast [A]_{\emptyset}=[A]_{\{(h, r)\}}+ \text{lower terms};
    \end{align*}
    \item Suppose $(A, \Delta)\in \Xi_{n|n,d}$, where $\Delta=\{(i_1, j_1), \cdots, (i_k, j_k)\}$, satisfies the following conditions:
    $$a_{hj}=0, \forall j< r;\quad a_{h, r}=R;\quad a_{h+1, j}=0, \forall j\leq r$$
    and 
    $$i_k=h,\quad j_k\geq r.$$
    Let $(C,\emptyset)\in \Xi_{n|n,d}$ such that $C-RE_{h+1, h}$ is a diagonal matrix and $\co(C)=\ro (A)$. Then
    \begin{align*}
        [C]_{\emptyset}\ast [A]_{\Delta}=[M]_{\Delta'}+ \text{lower terms},
    \end{align*}
    where $M=A-R(E_{h,r}-E_{h+1,r})$ and 
    $$
    \Delta'=\begin{cases}
        \Delta\cup\{(h+1,r)\} & \text{ if } j_k>r;\\
        (\Delta\setminus \{(h,r)\})\cup\{(h+1,r)\}& \text{ if } j_k=r.
    \end{cases}
    $$
\end{enumerate}
\end{lem}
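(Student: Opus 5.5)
The plan is to read off all three leading-term statements from the multiplication formulas for $\MS$ established in \cite{FZM25}. These formulas express $[B]_\emptyset\ast[A]_\Delta$, $[C]_\emptyset\ast[A]_\Delta$ (with $B-RE_{h,h+1}$, $C-RE_{h+1,h}$ diagonal) and $[D]_{\{(h,h)\}}\ast[A]_\emptyset$ as explicit $\mathcal A$-linear combinations of standard basis elements $[A']_{\Delta'}$. For each case I would write out that formula, specialize it to the hypotheses on $(A,\Delta)$, and then show two things:
\begin{enumerate}[(i)]
\item exactly one term is maximal for the order $\leq$ on $\Xi_{n|n,d}$ defined above, and it is the asserted $(M,\Delta')$;
\item its coefficient is $1$.
\end{enumerate}

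For $(a)$ and $(c)$, the product of $[B]_\emptyset$ (resp. $[C]_\emptyset$) with $[A]_\Delta$ is a sum over ways of moving $R$ units from row $h+1$ to row $h$ (resp. from row $h$ to row $h+1$). Each choice is a tuple $t=(t_1,\dots,t_n)$ with $\sum t_j=R$ and $t_j\le a_{h+1,j}$ (resp. $t_j\le a_{hj}$), together with a choice of the new decoration.

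The hypotheses force the last nonzero entry of row $h+1$ (resp. the first nonzero entry of row $h$) to be in column $r$, with value $R$. The matrix part $A+\sum_j t_j(E_{hj}-E_{h+1,j})$ is Bruhat-maximal exactly when all of $R$ is taken from column $r$, i.e. $t=RE_r$. This is the same argument as in \cite[3.9]{BLM90}: moving entries toward the far corner increases the rank-function invariants. So only terms with matrix $M$ can be top, and those with other $t$ are strictly lower regardless of decoration.

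Among the terms with matrix $M$, I would then compare decorations via the order $\Delta'\le\Delta$. In $(a)$ the decorated entry $(h+1,r)$ is emptied into $(h,r)$, so $(h,r)$ must enter the decoration. The question is whether row $h+1$ retains a decorated position: the largest admissible one is the maximal column $l<r$ with $a_{h+1,l}>0$ that is still compatible with the staircase condition, namely $l>j_*$. This gives the two cases $\mathcal J=\emptyset$ and $\mathcal J\neq\emptyset$. In $(c)$ the new entry $(h+1,r)$ lies strictly southwest of $(h,j_k)$ if $j_k>r$, so it is simply added. If $j_k=r$, the entry $(h,r)$ is emptied and replaced by $(h+1,r)$.

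For $(b)$, left multiplication by $[D]_{\{(h,h)\}}$ amounts to adding a vector in $V_h$ of the first flag. The resulting orbits are $(A,\{(i,j)\})$ with $i\le h$ and $a_{ij}>0$, or $(A,\emptyset)$. The hypotheses make $(h,r)$ the unique maximal such position: $a_{ij}=0$ for $i\le h$, $j>r$, and $a_{hr}\ne0$.

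The step I expect to be the main obstacle is (ii), checking that the leading coefficient is exactly $1$ in the normalization $[A]_\Delta=v^{-d(A,\Delta)+r(A,\Delta)}e_{A,\Delta}$. The raw count of the top term is a power of $q$, namely the number of fibre points (flags $f''$ together with the vector splitting). I would compute it geometrically. Over a generic point of $\mathcal O_{M,\Delta'}$ the fibre of the convolution is a single point, or an affine space whose dimension equals
\[
d(B,\emptyset)-r(B,\emptyset)+d(A,\Delta)-r(A,\Delta)-d(M,\Delta')+r(M,\Delta').
\]
This dimension identity follows from the orbit-dimension formula of \cite{MWZ,R18}, i.e. the $q$-Schur dimension plus the contribution of $\Delta$. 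The resulting $v$-powers then cancel. Equivalently, one can check this identity directly against the coefficient in the formula of \cite{FZM25}. The case $\mathcal J\ne\emptyset$ in $(a)$ needs the most care, since the decoration changes in two places.
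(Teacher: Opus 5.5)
Your plan matches the paper's proof. You specialize \cite[Proposition~4.1.7(c),(d) and 4.1.8(b)]{FZM25} and use the argument of \cite[Lemma~3.8]{BLM90} to see that only the summands with $\mathbf p=R\mathbf e_r$ produce the top matrix $M$. You then compare the decorations that $M$ can carry, which in (a) means $\Delta_c<\Delta_t$ with $\Delta_t$ increasing in $t\in\mathcal J$, and finally check the leading coefficient. For that last step the paper does exactly your fallback. It uses $\kappa_A(R\mathbf e_r)=0$ (resp.\ $\kappa'_A(R\mathbf e_r)=0$), notes that all Gaussian binomials reduce to $1$, and observes that the extra exponent $\sum_{l<j\le r}(a_{h+1,j}-p_j)$ vanishes because $l=\max\mathcal J$. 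In (b) the exponent $\sum_{i\le h,\,j>r}a_{ij}$ is zero by hypothesis.

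The geometric route you put first for step (ii) is, however, wrong as stated. Write $d_X=d(X)-r(X)$ and $N=d_{(B,\emptyset)}+d_{(A,\Delta)}-d_{(M,\Delta')}$. The coefficient of $[M]_{\Delta'}$ is then $v^{-N}\cdot\sharp F$, where $F$ is the convolution fibre over a point of $\mathcal O_{M,\Delta'}$. If $F$ were an affine space of dimension $N$, you would get $v^{-N}q^{N}=v^{N}$, not $1$. What you actually need is that $F$ is a single point \emph{and} $N=0$.

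The first condition holds because the middle flag is forced. In (a) and (c) you have $\mu=0$, with $V''_h=V_h\cap(V_{h-1}+V'_{r-1})$ in (a) and $V''_h=V_h+(V_{h+1}\cap V'_r)$ in (c). In (b) you have $f''=f$ and $\mu=w$. The second condition is a separate computation with $\dim\mathcal O_{A,\Delta}=\dim\mathcal O_A+\sum_{\{(i,j)\}\le\Delta}a_{ij}$. It is precisely what singles out the top decoration: in (a) the terms $(M,\Delta_t)$ with $t\in\mathcal J$, $t<l$, also have one-point fibres, but their coefficients are $v^{-\sum_{t<j<r}a_{h+1,j}}\neq 1$.

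There is also a minor inaccuracy in (b). The decorations that can occur are staircases with all row indices $\le h$, not just singletons. Moreover, $(A,\emptyset)$ never occurs, since $w\in V_h\setminus V_{h-1}$ on the support of $[D]_{\{(h,h)\}}$. Your conclusion is unaffected, because every such $\Delta$ satisfies $\Delta\le\{(h,r)\}$ under the hypotheses $a_{ij}=0$ for $i\le h$, $j>r$.
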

\begin{proof}
Let $\mathbf e_r$ denote the $r$-th standard basis vector of
$\mathbb N^n$. The argument in \cite[Lemma 3.8]{BLM90} shows that in $(a)$ and $(c)$, every summand indexed by
$\mbf p\neq R\mbf e_r$ is strictly smaller than $M$. It remains to determine the maximal $\Delta'$ of $M$ and compute its coefficient.

For part $(a)$, apply \cite[Proposition 4.1.7 $(c)$]{FZM25}. More precisely, for $\mathbf{p}\in P_R=\{\mathbf{p}=(p_{1},\cdots, p_n)\in \mathbb{N}^n\ |\ \sum_{i}p_i=R \}$, 
let $\kappa_{A}(\mathbf{p})=\sum_{j\geq u}a_{h, j}p_u-\sum_{j>u}a_{h+1, j}p_u+\sum_{u<u'}p_{u}p_{u'}$ and $X_{\mathbf{p}}=A+\sum_{u}p_u(E_{h, u}-E_{h+1, u})$
    \begin{align*}
        & [B]_\emptyset\ast [A]_{\Delta}\notag\\
        = &\sum\limits_{\mathbf{p}\in P_R, a_{h+1, u} \geq p_u}
  v^{\kappa_{A}(\mathbf{p})}\prod^{n}_{u=1}\overline{[a_{h, u}+p_u, p_u]}_v[X_{\mathbf{p}}]_{\Delta} \\
       & + \sum\limits_{\substack{\mathbf{p}\in P_R, p_{r}>0,\\ a_{h+1, u} \geq p_u}} v^{\kappa_{A}(\mathbf{p})-\sum_{1< j \leq r}(a_{h+1, j}-p_j)}\prod^{n}_{u=1}\overline{[a_{h, u}+p_u-\delta_{u,r}, p_u-\delta_{u,r}]}_v[X_{\mathbf{p}}]_{\Delta_c}\\
       & +\sum\limits_{\substack{\mathbf{p}\in P_R, a_{h+1, u} \geq p_u,\\ p_{r}>0, j_{\ast}\leq t< r}}v^{\kappa_{A}(\mathbf{p})-\sum_{t< j \leq r}(a_{h+1, j}-p_j)}\prod^{n}_{u=1}\overline{[a_{h, u}+p_u-\delta_{u,r}, p_u-\delta_{u,r}]}_v[X_{\mathbf{p}}]_{\Delta_t},
    \end{align*}
  where
  \begin{align*}
  \Delta_c&= \{(h, r), (i_2, j_2), \cdots, (i_k, j_k)\},\\
  \Delta_t& = \{(h, r), (h+1, t), (i_{2}, j_{2}), \cdots, (i_k, j_k)\},
  \end{align*}
  and $$[N, t]_v=\prod_{i\in [1, t]}\frac{v^{2(N-i+1)}-1}{v^{2i}-1}.$$
  For $\mathbf{p}=R\mbf e_r$, the corresponding $X_{\mbf p}=M$. The term carrying the original decoration $\Delta$ is absent, because 
  $M_{h+1,r}=0$ and hence $(M,\Delta)$ is not a decorated matrix.

The second sum contributes the $\Delta_c$. The third sum contributes
\[
\Delta_t=\Delta_c\cup\{(h+1,t)\}
\]
for those $t\in\mathcal J$. We have
\[
\Delta_c<\Delta_t,
\]
and, whenever $t<t'$,
\[
\Delta_t<\Delta_{t'}.
\]
Consequently, the maximal $\Delta'$ is $\Delta_c$ if $\mathcal J=\emptyset$, and it is $\Delta_l$ if $\mcal J\neq \emptyset$. It remains to compute the coefficient. Under the assumptions of
part $(a)$,  $\overline{[a_{h, u}+p_u-\delta_{u,r}, p_u-\delta_{u,r}]}_v=1$ for each $1\leq u\leq n$ and
\[
\kappa_A(R\mathbf e_r)
 =
 R\left(
 \sum_{j\geq r}a_{h,j}
 -
 \sum_{j>r}a_{h+1,j}
 \right)
 =0.
\]
If $\mathcal J=\emptyset$, $\sum_{1< j \leq r}(a_{h+1, j}-p_j)=0$, so the coefficient of $[M]_{\Delta_c}$ is $1$.
If $\mathcal J\neq\emptyset$, $\sum_{l< j \leq r}(a_{h+1, j}-p_j)=0$, so the coefficient of $[M]_{\Delta_l}$ is $1$.

  For $(b)$, recall the formula \cite[Proposition 4.1.7 $(d)$]{FZM25}, 
  the assumptions imply that no term with $t>r$ occurs. 
  We have
   \begin{align*}
		[D]_{\{(h,h)\}}\ast [A]_{\emptyset}=v^{-\sum_{i\leq h,j>r}a_{i, j}} [A]_{\{(h,r)\}}+\text{ lower terms}.
    \end{align*}
    The exponent $\sum_{i\leq h, j>r}a_{ij}$ is zero, so the coefficient of 
    $[A]_{\{(h,r)\}}$ is $1$. This proves part $(b)$.

For part $(c)$, apply proposition \cite[Proposition 4.1.8 $(b)$]{FZM25}. 
More precisely, for any $\mathbf{p}\in P_R$, 
let $\kappa_{A}'(\mathbf{p})=\sum_{j\leq u}a_{h+1, j}p_u-\sum_{j<u}a_{h, j}p_u+\sum_{u<u'}p_{u}p_{u'}$ and $X'_{\mathbf{p}}=A+\sum_{u}p_u(E_{h+1, u}-E_{h, u})$ 
we have 
    \begin{align*}
        & [C]_\emptyset\ast [A]_{\Delta}\\
    = & \sum\limits_{\mathbf{p}\in P_R, a_{h, u} \geq p_u} v^{\kappa_{A}'(\mathbf{p})-\sum_{1<u\leq j_k}p_u}\prod^{n}_{u=1}\overline{[a_{h+1, u}+p_u, p_u]}_v[X'_{\mathbf{p}}]_{\Delta} \\
      & +\sum\limits_{\substack{\mathbf{p}\in P_R, a_{h, u} \geq p_u\\p_{j_k}>0}} v^{\kappa_{A}'(\mathbf{p})-\sum_{1\leq j\leq j_k}a_{h+1,j}} \prod^{n}_{u=1}\overline{[a_{h+1, u}+p_u-\delta_{u, j_k}, p_u-\delta_{u, j_k}]}_v[X'_{\mathbf{p}}]_{\Delta_b} \\  
      & + \sum\limits_{\substack{\mathbf{p}\in P_R, a_{h, u} \geq p_u\\ 1\leq t<j_k, p_t>0}}v^{\kappa_{A}'(\mathbf{p})-\sum_{1\leq j\leq t}a_{h+1,j}}\prod^{n}_{u=1}\overline{[a_{h+1, u}+p_u-\delta_{u, t}, p_u-\delta_{u, t}]}_v[X'_{\mathbf{p}}]_{\Delta_t},
    \end{align*}
  where
  \begin{align*}
  \Delta_b &= \{(i_1, j_1), \cdots, (i_{k-1}, j_{k-1}), (h+1, j_k)\}, \\
  \Delta_t &= \{(i_1, j_1), \cdots, (i_{k-1}, j_{k-1}), (h, j_k), (h+1, t)\}.
  \end{align*}

  For $\mathbf p=R\mathbf e_r$, we have
    \[
    X'_{\mathbf p}=M
    \]
    and
    \[
    \kappa'_A(R\mathbf e_r)
     =
     R\left(
     \sum_{j\leq r}a_{h+1,j}
     -
     \sum_{j<r}a_{h,j}
     \right)
     =0.
    \]
 The terms carrying the original $\Delta$ are lower term. 
 For the second sum, if $j_k=r$ it contributes precisely
\[
[M]_{(\Delta\setminus\{(h,r)\})\cup\{(h+1,r)\}}.
\]
If $j_k>r$,  it contributes lower term for $\mbf p=R\mbf e_r$, 
while the third sum, with $t=r$, only contributes
\[
[M]_{\Delta\cup\{(h+1,r)\}}.
\]
when $j_k>r$.
Since
\[
\Delta<\Delta\cup\{(h+1,t)\}<(\Delta\setminus \{(h, r)\})\cup\{(h+1,r)\},
\]
We have the desired leading term for both cases.
Now under the assumption of $(c)$, the coefficients are $1$ by calculations similar to those in $(a)$.
\end{proof}

For any $[A]_{\Delta}\in \MS$, where $\Delta=\{(i_1, j_1), \cdots, (i_k, j_k)\}$, 
set
\begin{align}\label{circ start position}
    \vartheta(\Delta)=\max_{(i, j)\in \Delta} \{\min \{i,j\}\},\quad \vartheta(\emptyset)=0.
\end{align}

Define two subsets of $\Delta$
    \begin{align*}
    \Delta_{+}&= \{(i,j)\in \Delta \mid i\leq \vartheta(\Delta)\leq j\}, \\
    \Delta_{-}&= \{(i,j)\in \Delta\mid j\leq \vartheta(\Delta) <i\}.
\end{align*}
Moreover, we associate these data with four subsets of $[1,n]\times [1,n]$, where $[1,n]$ means the set of integers $\{1,\cdots, n\}$:
\begin{align*}
    I^{\leq}_{+}=&\{(i, j) \mid \{(i, j)\}\leq \Delta_{+}, (i, j)\neq (i_1, j_1)\};\\
    I^{>}_{+}=&\{(i, j)\ \mid 1\leq i<j\leq n\}\setminus  (I^{\leq}_{+}\cup\{(i_1, j_1)\});\\
    I^{\leq}_{-}=&\{(i, j) \mid \{(i, j)\}\leq \Delta_{-}\}.\\
    I^{>}_{-}=&\{(i, j)\ \mid 1\leq j<i\leq n\}\setminus  I^{\leq}_{-};
\end{align*}

Define a total order on the set
$$\{(i,h,j)\mid 1\leq i, j\leq n, 1\leq h\leq n-1\}$$ as follows:
$$(i, h, j)<(i', h',j')$$ 
if and only if
$$
i>i'\text{ or } i=i', j>j' \text{ or } i=i', j=j', h<h'.
$$

We define the following monomials
\begin{align}
    &[A]_{\Delta_{-}}^{>}=\prod^{\longleftarrow}_{\substack{(i,j)\in I^{>}_{-}\\ 1\leq j\leq h<i\leq n}}[D_{i, h, j}+a_{ij}E_{h+1, h}]_{\emptyset}\label{lr};\\
    &[A]_{\Delta_{-}}^{\leq }=\prod^{\longleftarrow}_{\substack{(i,j)\in I^{\leq }_{-}\\ 1\leq j\leq h<i\leq i_k}}[D_{i, h, j}+a_{ij}E_{h+1, h}]_{\emptyset}\label{ll};\\
    &[A]_{\Delta_{+}}^{>}=\prod^{\longrightarrow}_{\substack{(i,j)\in I^{>}_{+}\\1\leq i\leq h<j\leq n}}[D_{i, h, j}+a_{ij}E_{h, h+1}]_{\emptyset}\label{ur};\\
    &[A]_{\Delta_{+}}^{\leq}=\prod^{\longrightarrow}_{\substack{(i,j)\in I^{\leq}_{+}\\ 1\leq i\leq h<j\leq n}}[D_{i, h, j}+a_{ij}E_{h, h+1}]_{\emptyset}.\label{ul}
\end{align}
All products below are taken with respect to the multiplication
$\ast$.
The symbols 
$$\prod^{\longleftarrow},\quad \prod^{\longrightarrow}$$
indicate that the factors are arranged from left to right in decreasing and increasing order with respect to the order defined on 
$\{(i,h,j)\mid 1\leq i, j\leq n, 1\leq h\leq n-1\}$, respectively.
The diagonal matrices $D_{i, h, j}\in Mat_{n\times n}(\mathbb{N})$ are uniquely determined by $\ro(A)$ and $\co(A)$.

\begin{prop}\label{mon-b}
Let $[A]_{\Delta}\in \MS$, where $\Delta=\{(i_1, j_1), \cdots, (i_k, j_k)\}$ and $\vartheta(\Delta)$ is defined in $\eqref{circ start position}$.
The following identities hold.
    \begin{enumerate}[ $(a)$ ]
        \item Suppose that $i_1\leq j_1$. Then 
          \begin{align}
        &[A]_{\Delta_{-}}^{\leq}\ast [A]_{\Delta_{+}}^{>}\ast \prod^{\longrightarrow}_{h\in [i_1, \vartheta(\Delta)-1]}[D_{i_1, h, j_1}+a_{i_1,j_1}E_{h, h+1}]_{\emptyset}\notag
        \\
        &\ast L_{D, {\vartheta(\Delta)}}\ast \prod^{\longrightarrow}_{h\in [\vartheta(\Delta), j_1-1]}[D_{i_1, h, j_1}+a_{i_1,j_1}E_{h, h+1}]_{\emptyset}\ast [A]_{\Delta_{+}}^{\leq}\ast [A]_{\Delta_{-}}^{>}\notag\\
        =& [A]_{\Delta}+\text{ lower terms}\label{mon-u};
            \end{align}
        where diagonal matrices $D_{i, h, j}\in Mat_{n\times n}(\mbb N)$ are uniquely determined, and $(D, \{(\vartheta(\Delta), \vartheta(\Delta))\})$ is a suitable element in $\Xi_{n|n,d}$.
        \item Suppose that $i_1>j_1$. Then
         \begin{align}
        &\prod^{\longrightarrow}_{1\leq i\leq h<j\leq n}[D_{i, h, j}+a_{ij}E_{h, h+1}]_{\emptyset}\ast [A]_{\Delta_{-}}^{\leq}\ast L_{D, \vartheta(\Delta)}\ast [A]_{\Delta_{-}}^{>}\notag\\
        &= [A]_{\Delta}+\text{ lower terms}\label{mon-l}. 
        \end{align}
         where diagonal matrices $D_{i, h, j}\in Mat_{n\times n}(\mbb N)$ are uniquely determined, and $(D, \{(\vartheta(\Delta), \vartheta(\Delta))\})$ is a suitable element in $\Xi_{n|n,d}$.
         \item When $\Delta=\emptyset$. Then 
        \begin{align}
        &\prod^{\longrightarrow}_{1\leq i\leq h<j\leq n}[D_{i, h, j}+a_{ij}E_{h, h+1}]_{\emptyset}\ast \prod^{\longleftarrow}_{1\leq j\leq h<i\leq n}[D_{i, h, j}+a_{ij}E_{h, h+1}]_{\emptyset}\notag\\
        &= [A]_{\Delta}+\text{ lower terms}\label{mon-kong}. 
        \end{align}
         where diagonal matrices $D_{i, h, j}\in Mat_{n\times n}(\mbb N)$ are uniquely determined, and $(D, \{(\vartheta(\Delta), \vartheta(\Delta))\})$ is a suitable element in $\Xi_{n|n,d}$.
         
    \end{enumerate}
  We denote the left-hand side of \eqref{mon-u}, \eqref{mon-l} and \eqref{mon-kong} as appropriate, by $m_{A,\Delta}$.
\end{prop}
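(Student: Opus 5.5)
The plan is to build the product from the right, one factor at a time, and use Lemma \ref{leading} together with the classical BLM leading-term lemma \cite[Lemma 3.8]{BLM90} to track the leading term after each multiplication. Since every factor is a standard basis element $[\,\cdot\,]_\emptyset$ or $L_{D,\vartheta(\Delta)}$, and left multiplication by such an element sends ``$[X]_{\Gamma}+$ lower terms'' to ``$[X']_{\Gamma'}+$ lower terms'', the leading term can be followed step by step. For this to work I need one compatibility fact: if $(Y,\Gamma)<(X,\Gamma)$ in our order, then the leading term of $[B]_\emptyset\ast[Y]_\Gamma$ is smaller than that of $[B]_\emptyset\ast[X]_\Gamma$. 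For the matrix part this is the BLM argument. For the decoration part it follows from the explicit formulas of \cite[Proposition 4.1.7, 4.1.8]{FZM25} quoted in the proof of Lemma \ref{leading}, because the new decorations $\Delta_c,\Delta_t,\Delta_b$ are obtained from $\Delta$ by monotone operations.

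Case $(c)$, $\Delta=\emptyset$, is exactly the BLM construction. The rightmost block of lower-triangular factors, applied to a diagonal matrix in the prescribed order, builds the lower part of $A$ with leading coefficient $1$. The upper-triangular block then builds the upper part, as in \cite[Proposition 3.9]{BLM90}. The diagonal matrices $D_{i,h,j}$ are forced by requiring that each column sum match the row sum of the previous product.

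Cases $(a)$ and $(b)$ follow the same pattern, with four stages.

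First, the block $[A]^{>}_{\Delta_-}$ builds the undecorated part of $A$ in the region strictly below the decorations; its factors are lower-triangular with $\Delta=\emptyset$, so BLM applies.

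Second, $L_{D,\vartheta(\Delta)}$ introduces the first decoration. Among its summands $v^{-\cdots}[D]_{\{(i,i)\}}$, only the one with $i=\vartheta(\Delta)$ has coefficient $1$; the others, and $[D]_\emptyset$, produce lower decorations, since $\{(i,\ast)\}<\{(\vartheta,\ast)\}$ for $i<\vartheta$. Lemma \ref{leading}$(b)$ then gives $[\,\cdot\,]_{\{(\vartheta,r)\}}$ with coefficient $1$. The hypothesis $a_{ij}=0$ for $i\le h$, $j>r$ holds because the upper part has not yet been built.

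Third, the remaining blocks, $[A]^{\le}_{\Delta_-}$ and $[A]^{\le}_{\Delta_+}$ together with the chains $\prod[D_{i_1,h,j_1}+a_{i_1j_1}E_{h,h+1}]$, move entries row by row using Lemma \ref{leading}$(a)$ and $(c)$. In case $(a)$, factors $E_{h,h+1}$ push the decorated entry from row $\vartheta$ up to row $i_1$. Each application of Lemma \ref{leading}$(a)$ moves the top decoration up one row, and when $\mathcal J\ne\emptyset$ it leaves behind a new decoration $(h+1,l)$. The ordering by $I^{\le}_{\pm}$ versus $I^{>}_{\pm}$ is designed so that the entries placed beforehand are exactly those that create, through $\mathcal J$, the staircase $\Delta_+$, and through Lemma \ref{leading}$(c)$, the lower staircase $\Delta_-$.

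Fourth, after the leading term is identified, I check that its coefficient is $1$. Lemma \ref{leading} guarantees this at every step, provided its vanishing hypotheses on rows $h,h+1$ hold when it is applied.

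The main obstacle is the induction invariant in the third stage: after each factor, the intermediate decorated matrix must satisfy the hypotheses of the appropriate part of Lemma \ref{leading}, and the decorations created must be exactly the elements of $\Delta$ in order. Concretely, I would state, for each stage, the explicit intermediate pair $(A^{(s)},\Delta^{(s)})$: $A^{(s)}$ is $A$ with the not-yet-built entries collapsed onto the diagonal, and $\Delta^{(s)}$ is the portion of $\Delta$ already produced. I would then verify by the total order on $(i,h,j)$ that row $h$ is zero beyond column $r$ and row $h+1$ is zero beyond $r$ at the moment the factor with index $(i,h,j)$ acts, and that $\max\mathcal J$ equals the next column of the staircase. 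I expect this bookkeeping, especially showing that no unwanted $(h+1,l)$ decoration appears from entries in $I^{>}_+$, to require the most care.
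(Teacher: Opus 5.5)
Your strategy is the paper's: read the monomial from the right and follow the leading term factor by factor, using Lemma~\ref{leading} and \cite[Lemma 3.8]{BLM90}. Your four-stage account matches case $(b)$, and the paper also uses your compatibility fact without stating it.

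\textbf{The gap is in case $(a)$.} There you have misread the order of the factors, and stage two fails as you justify it. In \eqref{mon-u} the factors to the \emph{right} of $L_{D,\vartheta(\Delta)}$, which act first, are:
\begin{enumerate}
\item $[A]^{>}_{\Delta_-}$;
\item $[A]^{\le}_{\Delta_+}$;
\item the chain $\prod_{h\in[\vartheta(\Delta),j_1-1]}$, which lifts $a_{i_1j_1}$ from $(j_1,j_1)$ to $(\vartheta(\Delta),j_1)$.
\end{enumerate}
Only three blocks act after $L$: the chain $\prod_{h\in[i_1,\vartheta(\Delta)-1]}$, then $[A]^{>}_{\Delta_+}$ (which your plan never places), then $[A]^{\le}_{\Delta_-}$.

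So your claim that ``the upper part has not yet been built'' is false in case $(a)$. Suppose only $[A]^{>}_{\Delta_-}$ had acted before $L$. Then rows $\le\vartheta(\Delta)$ would hold only diagonal and lower-triangular entries, and any decoration created by $L$ would sit in a column $\le\vartheta(\Delta)$. The column-preserving moves of Lemma~\ref{leading}$(a)$ could then never bring it to $(i_1,j_1)$ when $j_1>\vartheta(\Delta)$.

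The correct justification for Lemma~\ref{leading}$(b)$, with $h=\vartheta(\Delta)$ and $r=j_1$, has two parts.
\begin{itemize}
\item $a_{\vartheta(\Delta),j_1}\neq 0$ is supplied by the first chain.
\item Every entry already present in rows $\le\vartheta(\Delta)$ lies in a column $\le j_1$. This holds because $j_1$ is the largest column of $\Delta$, and $I^{\le}_{+}$ consists of positions weakly north-west of elements of $\Delta_+$. Unplaced entries of $I^{>}_{+}$ in columns $>j_1$ still sit on the diagonal, in rows $>\vartheta(\Delta)$.
\end{itemize}

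Once the order is read correctly, the ordering itself handles the point you call the main obstacle. Since $[A]^{>}_{\Delta_+}$ stands to the left of $\prod_{h\in[i_1,\vartheta(\Delta)-1]}$, no entry of $I^{>}_{+}$ has left the diagonal when Lemma~\ref{leading}$(a)$ is applied. So none of them can create a spurious element of $\mathcal J$.

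The entries that matter for $\mathcal J$ are those of $I^{\le}_{+}$, placed before $L$. This is exactly how the paper produces the decorations $(i_t,j_t)$, $t\ge 2$, of $\Delta_+$: when the moving entry passes row $i_t$, $j_t$ is the largest occupied column of that row to the left of $j_1$. Afterwards, $[A]^{>}_{\Delta_+}$ completes the upper part by the BLM procedure, and $[A]^{\le}_{\Delta_-}$ creates or transports the decorations of $\Delta_-$ via Lemma~\ref{leading}$(c)$.

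With the stages reassigned this way, your plan coincides with the paper's proof.
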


\begin{proof}
    In the proof of \cite[Proposition 3.9]{BLM90}, the entries below the diagonal are filled starting with the bottom rows and proceeding upward, and, within each row, from right to left. The entries above the diagonal are then filled starting with the top rows and proceeding downward, and, within each row, from left to right. At each stage, the resulting leading term has coefficient $1$.
    For $(c)$, it follows \cite[Proposition 3.9]{BLM90}.
    For $(a)$ and $(b)$, we need some modification of this procedure. 
    
    Suppose first that $i_1\leq j_1$. We begin with the rightmost product,
    \[
        [A]_{\Delta_+}^{\leq}\ast[A]_{\Delta_-}^{>}.
    \]
    The ordinary BLM triangular relation gives an undecorated leading term with coefficient $1$. The factors
    \[
   \prod^{\longrightarrow}_{h\in [\vartheta(\Delta), j_1-1]}[D_{i_1, h, j_1}+a_{i_1,j_1}E_{h, h+1}]_{\emptyset},
    \]
    then move the amount $a_{i_1,j_1}$ upward along the $j_1$-th column, from the diagonal position $(j_1,j_1)$ to the position $(\vartheta(\Delta),j_1)$.
    The remaining part follows the same procedure in \cite[Proposition]{BLM90}.

    We next multiply by $L_{D,\vartheta(\Delta)}$. By Lemma \ref{leading} $(b)$, the resulting leading term carries the decoration $(\vartheta(\Delta),j_1)$.

    The factors
        \[
        \prod^{\longrightarrow}_{h\in [i_1, \vartheta(\Delta)-1]}[D_{i_1, h, j_1}+a_{i_1,j_1}E_{h, h+1}]_{\emptyset}
        \]
    move this decorated entry further upward until it reaches $(i_1,j_1)$. 
    Whenever the moving entry passes through the row $i_t\leq \vartheta(\Delta)$, with $2\leq t$, the ordering of the
    previously filled entries ensures that $j_t$ is the largest occupied column of that row strictly to the left of $j_1$. Repeated application of Lemma~\ref{leading}\textup{(a)} therefore leaves a decoration at $(i_t,j_t)$. 

    Finally, we apply the factors in $[A]_{\Delta_+}^{>}$ and then those in $[A]_{\Delta_-}^{\leq}$. The underlying matrix is completed by the ordinary BLM procedure. At the distinguished lower-triangular steps, Lemma~\ref{leading}\textup{(c)} transports the terminal decoration or creates the next decoration in
    $\Delta_-$. Thus the final leading term is $[A]_{\Delta}$, with coefficient $1$.
    
   Now suppose that $i_1>j_1$. Then $\Delta_+=\emptyset$ and $\Delta_-=\Delta$. We first apply $[A]_{\Delta_-}^{>}$. The BLM procedure produces an undecorated leading matrix in which the amount $a_{i_1,j_1}$ is still located at the diagonal position $(j_1,j_1)$. In this case, we have $\vartheta(\Delta)=j_1$. Then a decoration at $(j_1,j_1)$ arises by multiplying $L_{D,\vartheta(\Delta)}$.

    The factors in $[A]_{\Delta_-}^{\leq}$ then move the decorated entry downward from $(j_1,j_1)$ to $(i_1,j_1)$. Repeated applications of
    Lemma~\ref{leading}\textup{(c)} produce, in the prescribed order, all the remaining decorations in $\Delta_-$. The leftmost product fills the entries above the diagonal by the ordinary BLM procedure. Hence the final leading term is again $[A]_{\Delta}$ with coefficient $1$.
    
    We will explain the proof in more detail via the following example. 
    Recall $\co(A)_k=\sum^n_{i=1}a_{i,k}$ and $\ro(A)_k=\sum^n_{j=1}a_{k,j}$ where $k\in [1,n]$.
    
    Assume $n=5$ and $\Delta=\{(2, 4), (3, 3), (4, 2), (5, 1)\}$, i.e., 
  $$
  (A, \Delta)=
  \begin{pmatrix}
    a_{11} & a_{12}& a_{13} & a_{14}& a_{15}\\
    a_{21} & a_{22} & a_{23} &\enc{a_{24}} & a_{25}\\
     a_{31} & a_{32} & \enc{a_{33}} &a_{34} & a_{35}\\
    a_{41} & \enc{a_{42}} & a_{43} & a_{44}& a_{45}\\
     \enc{a_{51}} & a_{52} & a_{53} &a_{54} & a_{55}\\
  \end{pmatrix}.
  $$
  Then $\Delta_{+}=\{(2, 4), (3,3) \}$, $\Delta_{-}=\{(4,2), (5,1)\}$ and $\vartheta(\Delta)=3$. Then we first fill the elements $a_{ij}$ that
  $$
 (i, j)\in \{(5, 4), (5,3), (5,2), (4,3), (1,2), (1, 3), (2,3), (1,4)\}.
  $$
  The corresponding elements are generated by
  \begin{flalign*}
    &(B_1, \emptyset)=
    \begin{psmallmatrix}
        \co(A)_1+a_{12}+a_{13} & a_{14} & 0 & 0 & 0\\
        0 & \co(A)_2-a_{12}-a_{52}+a_{23} & 0 & 0 & 0\\
        0 & 0 & a_{33} & 0 & 0\\
        0 & 0 & 0 & \co(A)_4-a_{54}-a_{14}+a_{43} & 0\\
        0 & 0 & 0 & 0 & \co(A)_5+a_{54}+a_{53}+a_{52}
    \end{psmallmatrix}, \\
    &(B_2, \emptyset)=
    \begin{psmallmatrix}
        \co(A)_1+a_{12}+a_{13} & 0 & 0 & 0 & 0\\
        0 & \co(A)_2-a_{12}+a_{23}-a_{52} & a_{14} & 0 & 0\\
        0 & 0 & a_{33} & 0 & 0\\
        0 & 0 & 0 & \co(A)_4-a_{54}-a_{14}+a_{43} & 0\\
        0 & 0 & 0 & 0 & \co(A)_5+a_{54}+a_{53}+a_{52}
    \end{psmallmatrix}, \\
    &(B_3, \emptyset)=
    \begin{psmallmatrix}
        \co(A)_1+a_{12}+a_{13} & 0 & 0 & 0 & 0\\
        0 & \co(A)_2-a_{12}-a_{52}+a_{23} & 0 & 0 & 0\\
        0 & 0 & a_{33} & a_{14} & 0\\
        0 & 0 & 0 & \co(A)_4-a_{54}-a_{14}+a_{43} & 0\\
        0 & 0 & 0 & 0 & \co(A)_5+a_{54}+a_{53}+a_{52}
    \end{psmallmatrix}, \\
    &(B_4, \emptyset)=
    \begin{psmallmatrix}
        \co(A)_1+a_{12}+a_{13} & 0 & 0 & 0 & 0\\
        0 & \co(A)_2-a_{12}-a_{52} & a_{23} & 0 & 0\\
        0 & 0 & a_{33} & 0 & 0\\
        0 & 0 & 0 & \co(A)_4-a_{54}+a_{43} & 0\\
        0 & 0 & 0 & 0 & \co(A)_5+a_{54}+a_{53}+a_{52}
    \end{psmallmatrix}, \\
    &(B_5, \emptyset)=
    \begin{psmallmatrix}
        \co(A)_1+a_{12} & a_{13} & 0 & 0 & 0\\
        0 & \co(A)_2-a_{12}-a_{52} & 0 & 0 & 0\\
        0 & 0 & a_{23}+a_{33} & 0 & 0\\
        0 & 0 & 0 & \co(A)_4-a_{54}+a_{43} & 0\\
        0 & 0 & 0 & 0 & \co(A)_5+a_{54}+a_{53}+a_{52}
    \end{psmallmatrix}, \\
    &(B_{6}, \emptyset)=
    \begin{psmallmatrix}
        \co(A)_1+a_{12} & 0 & 0 & 0 & 0\\
        0 & \co(A)_2-a_{12}-a_{52} & a_{13} & 0 & 0\\
        0 & 0 & a_{23}+a_{33} & 0 & 0\\
        0 & 0 & 0 & \co(A)_4-a_{54}+a_{43} & 0\\
        0 & 0 & 0 & 0 & \co(A)_5+a_{54}+a_{53}+a_{52}
    \end{psmallmatrix}, \\
     &(B_{7}, \emptyset)=
    \begin{psmallmatrix}
        \co(A)_1 & a_{12} & 0 & 0 & 0\\
        0 & \co(A)_2-a_{12}-a_{52} & 0 & 0 & 0\\
        0 & 0 & \co(A)_3-a_{53}-a_{43} & 0 & 0\\
        0 & 0 & 0 & \co(A)_4-a_{54}+a_{43} & 0\\
        0 & 0 & 0 & 0 & \co(A)_5+a_{54}+a_{53}+a_{52}
    \end{psmallmatrix}, \\
   &(B_{8}, \emptyset)=
    \begin{psmallmatrix}
        \co(A)_1 & 0 & 0 & 0 & 0\\
        0 & \co(A)_2-a_{52} & 0 & 0 & 0\\
        0 & 0 & \co(A)_3-a_{53}-a_{43} & 0 & 0\\
        0 & 0 & 0 & \co(A)_4-a_{54}+a_{43} & 0\\
        0 & 0 & 0 & a_{52} & \co(A)_5+a_{54}+a_{53}
    \end{psmallmatrix}, \\
       &(B_{9}, \emptyset)=
    \begin{psmallmatrix}
        \co(A)_1 & 0 & 0 & 0 & 0\\
        0 & \co(A)_2-a_{52} & 0 & 0 & 0\\
        0 & 0 & \co(A)_3-a_{53}-a_{43} & 0 & 0\\
        0 & 0 & a_{52} & \co(A)_4-a_{54}+a_{43} & 0\\
        0 & 0 & 0 & 0 & \co(A)_5+a_{54}+a_{53}
    \end{psmallmatrix}, \\
       &(B_{10}, \emptyset)=
    \begin{psmallmatrix}
        \co(A)_1 & 0 & 0 & 0 & 0\\
        0 & \co(A)_2-a_{52} & 0 & 0 & 0\\
        0 & a_{52} & \co(A)_3-a_{53}-a_{43} & 0 & 0\\
        0 & 0 & 0 & \co(A)_4-a_{54}+a_{43} & 0\\
        0 & 0 & 0 & 0& \co(A)_5+a_{54}+a_{53}
    \end{psmallmatrix}, \\
    &(B_{11}, \emptyset)=
    \begin{psmallmatrix}
        \co(A)_1 & 0 & 0 & 0 & 0\\
        0 & \co(A)_2 & 0 & 0 & 0\\
        0 & 0 & \co(A)_3-a_{53}-a_{43} & 0 & 0\\
        0 & 0 & a_{43} & \co(A)_4-a_{54} & 0\\
        0 & 0 & 0 & 0 & \co(A)_5+a_{54}+a_{53}
    \end{psmallmatrix}, \\
    &(B_{12}, \emptyset)=
    \begin{psmallmatrix}
        \co(A)_1 & 0 & 0 & 0 & 0\\
        0 & \co(A)_2 & 0 & 0 & 0\\
        0 & 0 & \co(A)_3-a_{53} & 0 & 0\\
        0 & 0 & 0 & \co(A)_4-a_{54} & 0\\
        0 & 0 & 0 & a_{53} & \co(A)_5+a_{54}
    \end{psmallmatrix}, \\
    &(B_{13}, \emptyset)=
    \begin{psmallmatrix}
        \co(A)_1 & 0 & 0 & 0 & 0\\
        0 & \co(A)_2 & 0 & 0 & 0\\
        0 & 0 & \co(A)_3-a_{53} & 0 & 0\\
        0 & 0 & a_{53} & \co(A)_4-a_{54} & 0\\
        0 & 0 & 0 & 0 & \co(A)_5+a_{54}
    \end{psmallmatrix}, \\
    &(B_{14}, \emptyset)=
    \begin{psmallmatrix}
        \co(A)_1 & 0 & 0 & 0 & 0\\
        0 & \co(A)_2 & 0 & 0 & 0\\
        0 & 0 & \co(A)_3 & 0 & 0\\
        0 & 0 & 0 & \co(A)_4-a_{54} & 0\\
        0 & 0 & 0 & a_{54} & \co(A)_5
    \end{psmallmatrix}.
\end{flalign*}
  Then $[A]_{\Delta_{+}}^{\leq}\ast [A]_{\Delta_{-}}^{>}=[B_{1}]_{\emptyset}\ast \cdots [B_{14}]_{\emptyset}$ and the leading term is 
  \begin{flalign*}
    \begin{psmallmatrix}
    \co(A)_1 & a_{12} & a_{13} & a_{14} & 0\\
    0 &\co(A)_2-a_{12}-a_{52} & a_{23} & 0& 0\\
    0 & 0 & a_{33}& 0 & 0\\
    0 & 0 & a_{43}& a_{24}+a_{34}+a_{44}& 0\\
     0 & a_{52} & a_{53}& a_{54}& \co(A)_5\\
  \end{psmallmatrix}.
   \end{flalign*}
  
  Then we start to fill the elements $a_{ij}$ where $(i, j)\in \Delta$, we first fill the element $a_{i_1, j_1}=a_{24}$. 
  Take $\co(D_{2, 3,4}+a_{24}E_{34})=\ro(B_{1})$ and multiply with $[D_{2, 3,4}+a_{24}E_{34}]_{\emptyset}$, we have the leading term as: 
  \begin{flalign*}
    \begin{psmallmatrix}
    \co(A)_1 & a_{12} & a_{13} & a_{14} & 0\\
    0 &\co(A)_2-a_{12}-a_{52} & a_{23} & 0& 0\\
    0 & 0 & a_{33}& a_{24} & 0\\
    0 & 0 & a_{43}& a_{34}+a_{44}& 0\\
     0 & a_{52} & a_{53}& a_{54}& \co(A)_5\\
  \end{psmallmatrix}.
  \end{flalign*}
  Now we are ready to obtain the $\Delta$, we first multiply by $L_{D, 3}$, where $\co(D)=\ro(D_{2, 3,4}+a_{24}E_{34})$, 
   due to Lemma \ref{leading} $(b)$, we have the leading term as follows with coefficient $1$:
    \begin{flalign*}
    \begin{psmallmatrix}
    \co(A)_1 & a_{12} & a_{13} & a_{14} & 0\\
    0 &\co(A)_2-a_{12}-a_{52} & a_{23} & 0& 0\\
    0 & 0 & a_{33}& \senc{a_{24}} & 0\\
    0 & 0 & a_{43}& a_{34}+a_{44}& 0\\
     0 & a_{52} & a_{53}& a_{54}& \co(A)_5\\
  \end{psmallmatrix}.
  \end{flalign*}
  Then we multiply by $[D_{2,2,4}+a_{24}E_{23}]_{\emptyset}$ and use $(a)$ in Lemma \ref{leading}, we have
    \begin{flalign*}
    \begin{psmallmatrix}
    \co(A)_1 & a_{12} & a_{13} & a_{14} & 0\\
    0 &\co(A)_2-a_{12}-a_{52} & a_{23} & \senc{a_{24}}& 0\\
    0 & 0 & \senc{a_{33}}& 0 & 0\\
    0 & 0 & a_{43}& a_{34}+a_{44}& 0\\
     0 & a_{52} & a_{53}& a_{54}& \co(A)_5\\
  \end{psmallmatrix}.
  \end{flalign*}
  Thanks to $(c)$ in Lemma \ref{leading}, the remaining `circles' in $(A, \Delta)$ will naturally 
  appear when filling the corresponding elements in the order as we have stated.
\end{proof}

\subsection{Modified Mirabolic Quantum $\glln$ and its BLM Basis}

Let $\widetilde{\Xi}=\{(A, \Delta)\ |\ A=(a_{ij})\in Mat_{n\times n}(\mathbb{Z}), a_{i j}\geq 0 \text{ for } i\neq j\}$ 
where $\Delta=\{(i_1,j_1),\ldots,(i_k,j_k)\}$ satisfies 
$$
1\leq i_1<\cdots<i_k\leq n,\quad
1\leq j_k<\cdots<j_1\leq n,\\
a_{ij}>0\text{ whenever }(i,j)\in\Delta
\text{ and }i\neq j.
$$
Thus, unlike the definition of $\Xi_{n|n}$, no positivity condition is imposed on $a_{ii}$ when $(i,i)\in\Delta$.

For $(A,\Delta)\in\widetilde{\Xi}$ and $p\in\mathbb Z$, set $(_{p}A, \Delta)=(A+pI, \Delta)$ where $I$ is the identity matrix. 
When $p$ is large enough, we have $({}_pA, \Delta)\in \Xi_{n|n}$.

Let $v'$ be another indeterminate. 
The following proposition in \cite{FZM25} established the stabilization property of the mirabolic $q$-Schur algebra.

\begin{prop}\cite[Proposition 5.1.1]{FZM25}\label{stab}
  For $(A_1, \Delta_1), (A_2, \Delta_2), \cdots, (A_k, \Delta_k)\in \widetilde{\Xi}$ satisfying $\co(A_i)=\ro(A_{i+1})$ for $i=1, \cdots, k-1$.
  There exist $(Z_1, \Delta'_1), \cdots, (Z_m, \Delta'_m)$$\in \widetilde{\Xi}$, elements $G_{\Delta'_i}(v, v')\in \mbb Q(v)[v', v'^{-1}]$
  and an integer $p_{0}\geq 0$ such that for all $p \geq p_0$
  $$
  [_{p}A_{1}]_{\Delta_1} \ast [_{p}A_{2}]_{\Delta_2} \ast \cdots \ast [_{p}A_{k}]_{\Delta_k} =
  \sum_{i=1}^{m} G_{\Delta'_i}(v, v^{-p})[_{p}Z_{i}]_{\Delta'_i}.
  $$
\end{prop}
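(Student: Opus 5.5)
The plan is to follow the strategy of \cite[\S4]{BLM90}: first prove stabilization when every factor except the last is a generator, and then reduce the general case to that one using the triangularity of Proposition~\ref{mon-b}. By associativity and induction on $k$, it is enough to treat $k=2$, provided the conclusion holds with $[{}_pA_1]_{\Delta_1}$ allowed to be any element that is itself a stable combination $\sum_i G_i(v,v^{-p})[{}_pZ_i]_{\Delta_i'}$. Linear extension of the $k=2$ case then handles this: multiplying such a combination by a further factor gives a finite sum of products of two basis elements, each of which stabilizes, and the coefficient ring $\mathbb Q(v)[v',v'^{-1}]$ is closed under products.

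\emph{Step 1: generators.} I take the generators to be $[D+RE_{h,h+1}]_\emptyset$, $[D+RE_{h+1,h}]_\emptyset$ and $L_{D,h}$ (equivalently $[D]_{\{(i,i)\}}$), where $D$ is diagonal. For each generator $X$ I will check that $[{}_pX]\ast[{}_pA]_\Delta$ is a finite sum of the required form, using the explicit multiplication formulas \cite[Propositions 4.1.7, 4.1.8]{FZM25} already displayed in the proof of Lemma~\ref{leading}.
\begin{itemize}
\item After the shift $A\mapsto A+pI$, the index set of $\mathbf p$ and the matrices $X_{\mathbf p}$ become independent of $p$ once $p$ is large, apart from the shift ${}_pX_{\mathbf p}$ itself. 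The reason is that the constraints $a_{h+1,u}\ge p_u$ only become vacuous on diagonal entries.
\item The decorations $\Delta,\Delta_c,\Delta_t,\Delta_b$ do not depend on $p$.
\item The exponents $\kappa_A(\mathbf p)$ and the correction terms are affine in the entries of $A$. Their only $p$-dependence is $\pm p$ times an integer, which produces a power of $v^{-p}$. The same holds for the normalization $v^{-d+r}$, since $d(A,\Delta)-r(A,\Delta)$ shifts affinely in $p$.
\item For a quantum binomial $\overline{[a_{hh}+p+p_h,p_h]}_v$ with fixed $p_h$, expanding the product $\prod_i(v^{-2(N-i+1)}-1)/(v^{-2i}-1)$ at $N=a+p$ shows it is a polynomial in $v^{-2p}$ with coefficients in $\mathbb Q(v)$.
\end{itemize}
Together these give $G\in\mathbb Q(v)[v',v'^{-1}]$.

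\emph{Step 2: basis elements.} I argue by induction on the partial order $\le$ on $\widetilde\Xi$, lifted from Definition 3.1.2 to the stabilized setting. Proposition~\ref{mon-b} gives $m_{{}_pA,\Delta}=[{}_pA]_\Delta+\sum c\,[{}_pA']_{\Delta'}$, where $m_{{}_pA,\Delta}$ is a product of generators whose off-diagonal data is independent of $p$ and whose diagonal parts are ${}_pD_{i,h,j}$. Applying Step 1 repeatedly, $m_{{}_pA,\Delta}$ stabilizes. In particular the lower-term coefficients $c$ are of the form $G(v,v^{-p})$. The set of lower terms is finite and independent of $p$, because the row sums, column sums and off-diagonal entries are bounded.

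Hence $[{}_pA_1]_{\Delta_1}=m_{{}_pA_1,\Delta_1}-\sum c\,[{}_pA']_{\Delta'}$. By induction each lower $[{}_pA']_{\Delta'}\ast[{}_pA_2]_{\Delta_2}$ stabilizes, and $m_{{}_pA_1,\Delta_1}\ast[{}_pA_2]_{\Delta_2}$ stabilizes by Step 1. Subtracting finishes the proof.

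\emph{Main obstacle.} I expect the hardest part to be verifying in Step 1 that the mirabolic formulas are uniform in $p$. Two things have to be checked carefully. First, a decoration sitting on a diagonal entry $(i,i)$ must not appear or disappear as $p$ varies; this is exactly why $\widetilde\Xi$ drops the positivity condition on $a_{ii}$. Second, the terms $[X_{\mathbf p}]_{\Delta}$ that are killed when the decorated entry is zero must be killed only for off-diagonal reasons, which are independent of $p$. I would handle this by listing the finitely many $p$-independent cases in \cite[4.1.7, 4.1.8]{FZM25}, and then taking $p_0$ large enough that every diagonal entry appearing in any of them is positive.
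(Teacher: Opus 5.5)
The paper gives no proof of this statement; it imports it from \cite[Proposition 5.1.1]{FZM25}, which, like the \cite[Proposition 4.2]{BLM90} argument the paper invokes elsewhere, follows the BLM stabilization pattern. Your proposal is that same argument and is correct in outline: stabilize left multiplication by $[D+RE_{h,h+1}]_\emptyset$, $[D+RE_{h+1,h}]_\emptyset$ and $[D]_{\{(i,i)\}}$, then reach an arbitrary first factor through the unitriangular monomials of Proposition~\ref{mon-b} and induction over the finite, $p$-stable set of lower terms. The real work is the uniformity check in Step 1, which needs the full general formulas of \cite[Propositions 4.1.7, 4.1.8]{FZM25}, including the one for $[D]_{\{(h,h)\}}$ acting on an arbitrary decorated $[A]_\Delta$, and not only the special cases displayed in the proof of Lemma~\ref{leading}.
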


We assign each $\Delta$ a subset $I(\Delta)$ of $[1, n]\times [1, n]$, defined as
$$
I(\Delta)=\{(r,s)\mid \exists (i, j)\in \Delta \text{ s.t. } r\leq i, s\leq j\}.
$$
and recall in \cite{BLM90}, for any element in $\{A\in Mat_{n\times n}(\mbb Z)\mid a_{ij}\geq0 \text{ for } i\neq j\}$, one has
$$
\Psi(A)= \sum_{1\leq i<j\leq n}\frac{(j-i)(j-i+1)}{2} (a_{ij}+a_{ji}).
$$
We equip $\mathbb N^2$ with the lexicographic order and define $\Phi(A,\Delta)=(\Psi(A), \sharp I(\Delta))\in \mbb N^2$.
By definition, we have the following lemma.

\begin{lem}
    For any $(A',\Delta'), (A,\Delta)\in \widetilde{\Xi}$, we have the following results.
    \begin{enumerate}[$(a)$]
        \item For any integer $p$, $\Phi(_pA,\Delta)=\Phi(A,\Delta)$
        \item if $(A',\Delta')<(A,\Delta)$ then we have $\Phi(A',\Delta')<_{\mrm {lex}} \Phi(A,\Delta)$.
    \end{enumerate}
\end{lem}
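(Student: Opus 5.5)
The plan is to check both parts directly from the definitions, using the BLM description of the Bruhat order on matrices for the first coordinate of $\Phi$ and a combinatorial look at the sets $I(\Delta)$ for the second.

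\emph{Part $(a)$.} The shift $A\mapsto {}_pA=A+pI$ changes only diagonal entries. $\Psi(A)$ involves only the off-diagonal entries $a_{ij}+a_{ji}$ with $i<j$, so $\Psi({}_pA)=\Psi(A)$. The set $I(\Delta)$ depends only on $\Delta$, so $\sharp I(\Delta)$ is also unchanged, and $\Phi({}_pA,\Delta)=\Phi(A,\Delta)$.

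\emph{Part $(b)$, first coordinate.} By definition, $(A',\Delta')<(A,\Delta)$ means either $A'<A$, or $A'=A$ and $\Delta'<\Delta$. In the first case I show $\Psi(A')<\Psi(A)$.
\begin{itemize}
\item Recall from \cite[3.5]{BLM90} that, for matrices with equal row and column sums, $A'\le A$ if and only if $\sigma_{ij}(A')\le\sigma_{ij}(A)$ for all $i\neq j$. Here
\[
\sigma_{ij}(A)=\sum_{r\le i,\,s\ge j}a_{rs}\quad (i<j),\qquad \sigma_{ij}(A)=\sum_{r\ge i,\,s\le j}a_{rs}\quad (i>j).
\]
For matrices in $\widetilde{\Xi}$ one uses this after shifting by $p$; by part $(a)$ the shift is harmless, and it does not change any $\sigma_{ij}$ with $i\neq j$, since these involve only off-diagonal entries.
\item Counting the pairs $(i,j)$ with $r\le i<j\le s$ gives, for $r<s$, the number $(s-r)(s-r+1)/2$. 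Hence
\[
\Psi(A)=\sum_{i<j}\sigma_{ij}(A)+\sum_{i>j}\sigma_{ij}(A),
\]
where the second sum is handled by the symmetric count for $r>s$.
\item Therefore $A'\le A$ gives $\Psi(A')\le\Psi(A)$. If equality held, then $\sigma_{ij}(A')=\sigma_{ij}(A)$ for all $i\neq j$. The $\sigma$'s determine all off-diagonal entries by inclusion–exclusion, and the common row sums then fix the diagonal, forcing $A'=A$.
\item So $A'<A$ gives $\Psi(A')<\Psi(A)$, and hence $\Phi(A',\Delta')<_{\mrm{lex}}\Phi(A,\Delta)$.
\end{itemize}

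\emph{Part $(b)$, second coordinate.} Now suppose $A'=A$ and $\Delta'<\Delta$. Then $\Psi$ agrees and we must show $\sharp I(\Delta')<\sharp I(\Delta)$.
\begin{itemize}
\item $I(\Delta)$ is the down-set generated by $\Delta$ in the product order on $[1,n]\times[1,n]$.
\item The condition $\Delta'\le\Delta$ says exactly that every element of $\Delta'$ lies in $I(\Delta)$. Hence $I(\Delta')\subseteq I(\Delta)$.
\item For strictness, note that the defining conditions $i_1<\cdots<i_k$ and $j_k<\cdots<j_1$ make $\Delta$ an antichain. So $\Delta$ is precisely the set of maximal elements of $I(\Delta)$, and the same holds for $\Delta'$.
\item If $I(\Delta')=I(\Delta)$, their maximal elements coincide, giving $\Delta'=\Delta$, which contradicts $\Delta'\neq\Delta$. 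Thus $\sharp I(\Delta')<\sharp I(\Delta)$.
\end{itemize}

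The only step needing care is the strict inequality $\Psi(A')<\Psi(A)$ for $A'<A$. Specifically, one must confirm that the Bruhat order on $\widetilde{\Xi}$, with possibly negative diagonal entries, is compatible with the $\sigma$-characterization; I handle this by reducing to ${}_pA$ for large $p$.
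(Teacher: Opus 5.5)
Your proposal is correct, and it is exactly the direct check the paper has in mind when it says the lemma holds ``by definition''; the paper gives no proof, and your argument supplies the omitted details. Part $(a)$ holds because $\Psi$ and $I(\Delta)$ do not involve the diagonal. Part $(b)$ follows from the identity $\Psi(A)=\sum_{i\neq j}\sigma_{ij}(A)$ with inclusion--exclusion in the case $A'<A$, and from the fact that an antichain $\Delta$ is exactly the set of maximal elements of its down-set $I(\Delta)$, which gives $I(\Delta')\subsetneq I(\Delta)$ when $A'=A$ and $\Delta'<\Delta$.
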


\begin{prop}\label{bar-inv}
    For any $(A,\Delta)\in \widetilde{\Xi}$, there exist $(T_1, \Delta_1), (T_2, \Delta_2), \cdots, (T_m, \Delta_m)\in \widetilde{\Xi}$, 
    elements $H_i(v,v^{'})\in \mathbb{Q}(v)[v', v'^{-1}]$, $1\leq i\leq m$ and an integer $p_0\geq 0$ such that
    $$
    \overline{[_{p}A]_{\Delta}}=\sum_{i=1}^{m}H_{i}(v, v^{-p})[_{p}T_i]_{\Delta_i}
    $$
    for all $p\geq p_0$.
\end{prop}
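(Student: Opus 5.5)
We argue by induction on $\Phi(A,\Delta)$ with respect to the lexicographic order on $\mathbb N^2$, following the strategy of \cite[Proposition 4.3]{BLM90}. Fix $(A,\Delta)\in\widetilde{\Xi}$ and take $p$ large enough that $({}_pA,\Delta)\in\Xi_{n|n}$. By Proposition \ref{mon-b}, applied to $({}_pA,\Delta)$, the monomial $m_{{}_pA,\Delta}$ is a product of factors $[{}_pD_{i,h,j}+a_{ij}E_{h,h\pm1}]_{\emptyset}$ and, when $\Delta\neq\emptyset$, one factor $L_{{}_pD,\vartheta(\Delta)}$. Its leading term is $[{}_pA]_\Delta$ with coefficient $1$. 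The diagonal matrices ${}_pD_{i,h,j}$ are the shifts by $p$ of fixed matrices in $\widetilde{\Xi}$, because row and column sums shift uniformly. Proposition \ref{stab} then gives, for $p\ge p_0$,
\[
m_{{}_pA,\Delta}=[{}_pA]_\Delta+\sum_{i}G_i(v,v^{-p})[{}_pZ_i]_{\Delta'_i},
\]
where each $(Z_i,\Delta'_i)<(A,\Delta)$ for $p$ large. The lower-term property is preserved under stabilization since it is a condition on $(Z,\Delta')$ that is invariant under the shift. Hence $\Phi(Z_i,\Delta'_i)<_{\mathrm{lex}}\Phi(A,\Delta)$ by the preceding lemma.

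The key point is that each factor of $m_{{}_pA,\Delta}$ is bar-invariant. The undecorated factor $[{}_pD+aE_{h,h\pm1}]_\emptyset$ is the characteristic function of a closed orbit, normalized by $v^{-d+r}$, so it equals $\{{}_pD+aE_{h,h\pm1}\}$. For $L_{{}_pD,h}$, note that the orbits $\mathcal O_{D,\{(i,i)\}}$ with $i\le h$, together with $\mathcal O_{D,\emptyset}$, form the closure of $\mathcal O_{D,\{(h,h)\}}$: this closure is the smooth variety $\{(f,f,w)\mid w\in V_h\}$, a vector bundle over the diagonal. Consequently $L_{D,h}=\{D\}_{\{(h,h)\}}$. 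Here we must check that the powers $v^{-\sum_{k=i+1}^h d_k}$ are exactly $v^{-d(D,\{(h,h)\})+d(D,\{(i,i)\})}\cdot v^{\text{normalization}}$, which amounts to a direct dimension count: $\dim V_h-\dim V_i=\sum_{k=i+1}^h d_k$. Since the bar involution on $\MS$ is an anti-? (in fact) algebra involution, coming from Verdier duality commuting with convolution, we get $\overline{m_{{}_pA,\Delta}}=m_{{}_pA,\Delta}$.

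Now apply the bar involution to the displayed identity. This gives
\[
\overline{[{}_pA]_\Delta}=m_{{}_pA,\Delta}-\sum_i G_i(v^{-1},v^{p})\overline{[{}_pZ_i]_{\Delta'_i}}.
\]
We expand $m_{{}_pA,\Delta}$ by the displayed stabilization. By induction, each $\overline{[{}_pZ_i]_{\Delta'_i}}$ has a stable expansion for $p$ large. Two issues remain. The first is that $G_i(v^{-1},v^{p})$ must be rewritten in the form $H(v,v^{-p})$, which is handled exactly as in \cite{BLM90} by working in $\mathbb Q(v)[v',v'^{-1}]$ with the involution $v\mapsto v^{-1}$, $v'\mapsto v'^{-1}$. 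The second is that the base case is $\Phi$ minimal, where there are no lower terms. Since only finitely many $(Z,\Delta')$ occur and $\Phi$ takes values in a well-ordered set, the induction terminates, and we take $p_0$ to be the maximum of the finitely many thresholds. The main obstacle is the identification $L_{D,h}=\{D\}_{\{(h,h)\}}$, i.e. the bar-invariance of $L$: it requires precise knowledge of the closure of $\mathcal O_{D,\{(h,h)\}}$ and of its smoothness. If that fails, we would instead prove bar-invariance of $L_{D,h}$ by a direct computation with the formula for $\bar{\ }$ on the diagonal part.
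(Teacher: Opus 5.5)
Your overall plan is the paper's: induction on $\Phi$, the monomial $m_{{}_pA,\Delta}$ with leading coefficient $1$, stabilization, and bar-invariance of each factor. Your justification of $L_{D,h}=\{D\}_{\{(h,h)\}}$ via the smooth closure $\{(f,f,w)\mid w\in V_h\}$ is fine, and more explicit than the paper's. The gap is the step where you conclude $\overline{m_{{}_pA,\Delta}}=m_{{}_pA,\Delta}$ on the grounds that the bar involution on $\MS$ is an algebra (anti-)involution, because Verdier duality commutes with convolution. That is false in the mirabolic setting. The convolution goes through $p_3(f,f',f'',w,w')=(f,f'',w+w')$, which is not proper since its fibres contain an affine space of vectors, so Verdier duality does not commute with it.

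Concretely, take $D$ diagonal with $d_1\geq 1$. Then $L_{D,1}=\{D\}_{\{(1,1)\}}$ is $v^{-d_1}$ times the characteristic function of $\{(f,f,w)\mid w\in V_1\}$. A direct count gives
\[
L_{D,1}\ast L_{D,1}=v^{-2d_1}q^{d_1}\cdot v^{d_1}L_{D,1}=v^{d_1}L_{D,1},
\]
which is not bar-invariant although both factors are. So the bar map on $\MS$ is neither multiplicative nor anti-multiplicative, and a product of bar-invariant elements of $\MS$ need not be bar-invariant.

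The repair, which is exactly what the paper uses, is that bar is an $\Sd$-bimodule homomorphism of $\MS$. Convolving with an undecorated element only sums over an intermediate flag and leaves the vector coordinate untouched. The relevant projection is therefore proper, and Verdier duality does commute with it. Combined with multiplicativity of bar on $\Sd$ (from BLM), this gives
\[
\overline{b_1\ast\cdots\ast b_k\ast x\ast b_{k+1}\ast\cdots\ast b_N}=\overline{b_1}\ast\cdots\ast\overline{b_k}\ast\overline{x}\ast\overline{b_{k+1}}\ast\cdots\ast\overline{b_N}
\]
for $b_i\in\Sd$ and a single $x\in\MS$. You must therefore use, explicitly, that $m_{{}_pA,\Delta}$ contains exactly one factor $L_{{}_pD,\vartheta(\Delta)}$ not lying in $\Sd$; your description of the monomial already records this. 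With that, the rest of your argument goes through as in the paper: lower terms are stable under the shift, $G_i(v^{-1},v^{p})$ is rewritten in the form $H(v,v^{-p})$, and the induction on the well-ordered values of $\Phi$ closes.
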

\begin{rem}
    It is easy to see that $H_i(v,v^{-p})$ and $G_{\Delta'_i}(v, v^{-p})\in \mathcal A$.
\end{rem}
\begin{proof}
    We prove the statement by induction on $\Phi(A,\Delta)$. For $\Phi(A,\Delta)=(0, 0)$, we have $A$ is diagonal and $\Delta=\emptyset$, so $\overline{[_pA]_{\Delta}}=[_pA]_{\Delta}$.
    
    Now assume that $(0, 0)<_{\mrm{lex}} \Phi(A,\Delta)$ and the statement has been proved for every $(A', \Delta')\in \widetilde{\Xi}$ 
    satisfying $\Psi(A',\Delta')<_{\mrm{lex}}\Phi(A,\Delta)$. And we replace $A$ with $_rA$ if necessary so that we can assume $(A,\Delta)\in \Xi_{n|n,d}$.
    
    By Proposition \ref{mon-b} and Proposition \ref{stab}, we have, for all sufficient large integer $p$ 
    $$
    [_pA]_{\Delta}=[_pB_1]_{\emptyset}\ast \cdots \ast L_{_pD, t} \ast \cdots \ast [_pB_N]_{\emptyset}-\sum_{i=2}^{m} G_{\Delta'_i}(v, v^{-p})[_{p}Z'_{i}]_{\Delta^{'}_i},
    $$
    where $(_pZ'_i, \Delta^{'}_i)\in \widetilde{\Xi}$ satisfy $\Psi(_pZ_i, \Delta^{'}_i)<_{\mrm{lex}}\Psi(_pA, \Delta)$ and $G_{\Delta'_i}(v, v^{-p})\in \mathfrak{R}$.
    
   
    Now we apply the bar-involution to both sides. Due to the induction hypothesis, we only need to check that 
    $[_pB_1]_{\emptyset}\ast \cdots \ast L_{_pD, t} \ast \cdots \ast [_pB_N]_{\emptyset}$ is invariant under the involution. 
    We can regard the elements $[{}_pB_i]_{\emptyset}$ as elements in $q$-Schur algebra, and we have seen that 
    the bar involution is the module homomorphism on the mirabolic $q$-Schur algebra as a bimodule of $q$-Schur algebra, 
    so we only need to prove
    $$
    \overline{[_pB_i]_{\emptyset}}=[_pB_i]_{\emptyset}\quad \overline{L_{_pD, t}}=L_{_pD, t}.
    $$
    The first is true because $(_pB_i, \emptyset)$ corresponds to the closed orbit and identity \eqref{bar}.
    The second one is true by the construction of canonical basis of the mirabolic $q$-Schur algebra and $L_{_pD, t}=\{{}_{p}D\}_{\{(t, t)\}}$.
\end{proof}

    Let $v'=1$, we obtain the stabilization algebra $\mathcal K$ of \cite{FZM25}. We refer this algebra as the $\mathcal{A}$-form of modified mirabolic quantum $\mathfrak{gl}_n$ and denote it by $_{\mcal{A}}\mmuvgln$ and denote by $\mmuvgln$ the algebra $\mbb{Q}(v)\otimes_{\mcal{A}}\mmuvgln$. 
    Then elements $[A]_{\Delta}$ for $(A,\Delta)\in \widetilde{\Xi}$ form a basis of  $_{\mcal{A}}\mmuvgln$ and therefore a basis of $\mmuvgln$.
    
    The multiplication is defined as 
    $$
    [A]_{\Delta}\cdot [A']_{\Delta'}=
    \begin{cases}
        \sum_{i=1}^{m} G_{\Delta'_i}(v, 1)[Z_{i}]_{\Delta'_i} & \text{ if } \co(A)=\ro(A') \\
        0 &\text { otherwise. }
    \end{cases}
    $$
    Furthermore, Proposition \ref{bar-inv} and \cite[Proposition 4.3]{BLM90} show that there is a $\dot{U}_{v}(\glln)$-bimodule homomorphism $\bar{}:\mmuvgln\to \mmuvgln$ defined by
    $$
    \overline{v^{j}[A]_{\Delta}}=v^{-j}\sum_{i=1}^{m}H_{i}(v, 1)[T_i]_{\Delta_i},
    $$
    whose square is the identity.

    For any $(A,\Delta)\in \widetilde{\Xi}$, where $\Delta=\{(i_1, j_1), \cdots, (i_k, j_k)\}$, 
    we can define similar monomials $[A]_{\Delta_{-}}^{\leq}$, $[A]_{\Delta_{-}}^{>}$, $[A]_{\Delta_{+}}^{\leq}$, $[A]_{\Delta_{+}}^{>}$ in $\mmuvgln$ 
    as Proposition \ref{mon-b}. 
    For $(D, \{(h, h)\})\in \widetilde{\Xi}$, we also define 
    $$L_{D, h}=\sum_{i=1}^{h}v^{-\sum_{k=i+1}^{h}d_k}[D]_{\{(i, i)\}}+ v^{-\sum_{1\leq i\leq h}d_i}[D]_{\emptyset}.$$ 
    
    We can also define a similar partial order for $\widetilde{\Xi}$, that is, for $(A, \Delta), (A',\Delta')\in \widetilde{\Xi}$, 
    we say $(A',\Delta')\sqsubseteq (A,\Delta)$ if
    \begin{itemize}
        \item $A'\sqsubset A$
        \item or $A'=A$ and $\Delta'\leq \Delta$.
    \end{itemize}
    Then if $(A',\Delta')\sqsubseteq (A,\Delta)$ and $(A',\Delta')\neq (A,\Delta)$, we say
    $(A',\Delta')\sqsubset (A,\Delta)$.
    
    According to the multiplication formulas of $\mmuvgln$ (cf. \cite[Corollary 5.2.1-5.2.2]{FZM25}), 
    for any $(A,\Delta)\in \widetilde{\Xi}$, where $\Delta=\{(i_1, j_1), \cdots, (i_k, j_k)\}$, take $\vartheta(\Delta)$ as $\eqref{circ start position}$, 
    Proposition \ref{mon-b} implies the following identities.
    \begin{enumerate}[ $(a)$ ]
        \item Suppose $i_1\leq j_1$.
          \begin{align}
        &[A]_{\Delta_{-}}^{\leq}\cdot [A]_{\Delta_{+}}^{>}\cdot \prod^{\longrightarrow}_{h\in [i_1, \vartheta(\Delta)-1]}[D_{i_1, h, j_1}+a_{i_1,j_1}E_{h, h+1}]_{\emptyset}\notag
        \\
        &\cdot L_{D, \vartheta(\Delta)}\cdot \prod^{\longrightarrow}_{h\in [\vartheta(\Delta), j_1-1]}[D_{i_1, h, j_1}+a_{i_1,j_1}E_{h, h+1}]_{\emptyset}\ast [A]_{\Delta_{+}}^{\leq}\ast [A]_{\Delta_{-}}^{>}\notag\\
        =& [A]_{\Delta}+\text{ lower terms}\label{mmon-u},
            \end{align}
        \item If $i_1>j_1$
         \begin{align}
        &\prod^{\longrightarrow}_{1\leq i\leq h<j\leq n}[D_{i, h, j}+a_{ij}E_{h, h+1}]_{\emptyset}\cdot [A]_{\Delta_{-}}^{\leq}\cdot L_{D, j_1}\cdot [A]_{\Delta_{-}}^{>}\notag\\
        &= [A]_{\Delta}+\text{ lower terms}\label{mmon-l},
        \end{align}
    \end{enumerate}
    where the product is taken in $\mmuvgln$, $(D, \{(\vartheta(\Delta), \vartheta(\Delta))\})$ and $(D, \{(j_1, j_1)\})$ are suitable elements in $\widetilde{\Xi}$, the diagonal matrices $D_{i, h, j}$ are uniquely determined.
    The elements on the left-hand side of identities (\ref{mmon-u}) and (\ref{mmon-l}) will be denoted by $M_{A, \Delta}$, they form the monomial basis of $_{\mcal A}\mmuvgln$.

  By the definition, this basis is fixed under the bar involution, and therefore we have
  $$
  \overline{[A]_{\Delta}}=\sum_{(A',\Delta')\sqsubseteq (A,\Delta)}\tau_{(A, \Delta), (A',\Delta')}[A']_{\Delta'}.
  $$
 where $\tau_{(A, \Delta), (A',\Delta')}\in \mathcal{A}$ and $\tau_{(A, \Delta), (A,\Delta)}=1$.

  By the standard argument in \cite[7.10]{L90}, we obtain the following result.
  \begin{prop}\label{bas-gln}
     For every $(A,\Delta)\in\widetilde{\Xi}$, there exists a unique element $\{A\}_{\Delta}\in {}_{\mathcal{A}}\mmuvgln$ such that $\overline{\{A\}_{\Delta}}=\{A\}_{\Delta}$
      and 
      $$\{A\}_{\Delta}=[A]_{\Delta}+\sum_{(A', \Delta')\sqsubset (A,\Delta)} \pi_{(A,\Delta), (A', \Delta')}[A']_{\Delta'},$$
      where $\pi_{(A,\Delta), (A', \Delta')}\in v^{-1}\mathbb{Z}[v^{-1}]$ for $(A',\Delta')\sqsubset(A, \Delta)$.
      The elements $\{A\}_{\Delta}$ form an $\mcal{A}$-basis of $_{\mcal{A}}\mmuvgln$ and therefore a basis of $\mmuvgln$.
  \end{prop}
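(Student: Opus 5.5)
We follow the standard Kazhdan--Lusztig type argument of \cite[7.10]{L90}, applied to the triangular bar-action
$$
\overline{[A]_{\Delta}}=\sum_{(A',\Delta')\sqsubseteq (A,\Delta)}\tau_{(A,\Delta),(A',\Delta')}[A']_{\Delta'},\qquad \tau_{(A,\Delta),(A,\Delta)}=1 .
$$
The first step is to check that this is the setting in which the argument runs. The bar map is an involution on ${}_{\mathcal A}\mmuvgln$, and it is $v\mapsto v^{-1}$ semilinear. Unitriangularity comes from the monomial basis $M_{A,\Delta}$. Each $M_{A,\Delta}$ is bar invariant, since it is a product of bar-invariant factors $[D+aE_{h,h\pm1}]_\emptyset$ and $L_{D,h}$, and bar is a $\dot U_v(\glln)$-bimodule map. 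Inverting the unitriangular relation $M_{A,\Delta}=[A]_\Delta+\text{lower terms}$ then gives the displayed formula. Applying bar twice and using $\overline{\overline{x}}=x$ yields the identity
$$
\sum_{(A'',\Delta'')}\overline{\tau_{(A,\Delta),(A'',\Delta'')}}\,\tau_{(A'',\Delta''),(A',\Delta')}=\delta_{(A,\Delta),(A',\Delta')} .
$$

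The second step is to construct $\pi_{(A,\Delta),(A',\Delta')}$ by downward induction on $(A',\Delta')$ within the interval $\{(A',\Delta')\sqsubseteq(A,\Delta)\}$. We set $\pi_{(A,\Delta),(A,\Delta)}=1$. Suppose $\pi_{(A,\Delta),(A'',\Delta'')}$ is known for all $(A',\Delta')\sqsubset(A'',\Delta'')\sqsubseteq(A,\Delta)$, and put
$$
s=\sum_{(A',\Delta')\sqsubset(A'',\Delta'')\sqsubseteq(A,\Delta)}\overline{\pi_{(A,\Delta),(A'',\Delta'')}}\,\tau_{(A'',\Delta''),(A',\Delta')} .
$$
Using the identity above, one checks that $\overline{s}=-s$. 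Hence $s=\sum_k s_k v^k$ with $s_{-k}=-s_k$, and the unique $\pi\in v^{-1}\mathbb Z[v^{-1}]$ with $\pi-\overline{\pi}=s$ is given by taking the negative-degree part of $s$.

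The main technical point is that the interval $\{(A',\Delta')\sqsubseteq(A,\Delta)\}$ must be finite, so that both the induction and the sums are finite. Row and column sums are fixed, and the off-diagonal entries are nonnegative and bounded via the BLM order $\sqsubseteq$ on matrices; this gives only finitely many possible $A'$. For each such $A'$ there are only finitely many $\Delta'$. If needed, one can instead induct on $\Phi(A',\Delta')$, which strictly decreases by the preceding Lemma. The order defined on $\widetilde{\Xi}$ also has to be transitive, so that the terms produced by $\tau$ stay inside the interval; this holds because its two components, the BLM order and the order on $\Delta$, are both transitive.

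Uniqueness follows because the difference $x$ of two solutions is bar invariant and is a combination of $[A']_{\Delta'}$ with coefficients in $v^{-1}\mathbb Z[v^{-1}]$. Looking at a maximal term of $x$, unitriangularity of $\tau$ forces its coefficient $c$ to satisfy $\overline c=c$. Since $c\in v^{-1}\mathbb Z[v^{-1}]$, this gives $c=0$. Finally, the transition matrix from $\{[A]_\Delta\}$ to $\{\{A\}_\Delta\}$ is unitriangular over $\mathcal A$ with finite intervals, so it is invertible over $\mathcal A$. Therefore the elements $\{A\}_\Delta$ form an $\mathcal A$-basis of ${}_{\mathcal A}\mmuvgln$, and hence a basis of $\mmuvgln$.
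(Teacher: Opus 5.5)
Your proposal is correct and follows the same route as the paper. The paper obtains the unitriangular bar action $\overline{[A]_{\Delta}}=\sum_{(A',\Delta')\sqsubseteq(A,\Delta)}\tau_{(A,\Delta),(A',\Delta')}[A']_{\Delta'}$ from the bar-invariant monomial elements $M_{A,\Delta}$, and then simply cites the standard argument of \cite[7.10]{L90}. You have written that argument out in full, including the finiteness of the intervals under $\sqsubseteq$ and the transitivity of the order, both of which the paper leaves implicit.
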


  Recall the surjective algebra homomorphism $\Phi_d:\mmuvgln\to {}_{\mbb Q}\MS$
  $$
  \Phi_d([A]_{\Delta})=\begin{cases}
    [A]_{\Delta}  & \text{ if } (A,\Delta)\in \Xi_{n|n, d},\\
     0 & \text{ otherwise }.
  \end{cases}
  $$
  We have the following lemma.
  \begin{lem} \label{cb-gln to schur}
      For any $(A,\Delta)\in \widetilde{\Xi}$, we have
       $$
 \Phi_d(\{A\}_{\Delta})=\begin{cases}
    \{A\}_{\Delta}  & \text{ if } (A,\Delta)\in \Xi_{n|n, d},\\
     0 & \text{ otherwise }.
  \end{cases}
  $$
  \end{lem}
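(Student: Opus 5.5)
The plan is the standard BLM/Lusztig uniqueness argument: show that $\Phi_d(\{A\}_{\Delta})$ satisfies the two properties that characterize the canonical basis element $\{A\}_{\Delta}$ of $\MS$, namely bar-invariance and unitriangularity with off-diagonal coefficients in $v^{-1}\mathbb Z[v^{-1}]$. Uniqueness then forces equality.

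First we check that $\Phi_d$ intertwines the bar involutions. The monomial elements $M_{A,\Delta}$ are bar-invariant in $\mmuvgln$. By Proposition \ref{mon-b} and the definition of $\Phi_d$ (it is an algebra map sending $[B]_\emptyset\mapsto[B]_\emptyset$ and $L_{D,h}\mapsto L_{D,h}$, or to $0$ when the matrices leave $\Xi_{n|n,d}$), the image $\Phi_d(M_{A,\Delta})$ is either $0$ or the monomial $m_{A,\Delta}$ of $\MS$. The element $m_{A,\Delta}$ is a product of bar-invariant elements $[B]_\emptyset$ and $L_{D,h}=\{D\}_{\{(h,h)\}}$. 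Since bar on $\MS$ is an $\Sd$-bimodule map and $L_{D,h}$ is a canonical basis element, the same reasoning as in the proof of Proposition \ref{bar-inv} shows that $m_{A,\Delta}$ is bar-invariant. One subtlety is that a monomial in $\mmuvgln$ may be nonzero while an intermediate factor vanishes in $\MS$; in that case its image is $0$, which is trivially bar-invariant. Since the $M_{A,\Delta}$ form an $\mathcal A$-basis, it follows that $\Phi_d\circ\bar{\ }=\bar{\ }\circ\Phi_d$ on ${}_{\mathcal A}\mmuvgln$. This is essentially the content of Proposition \ref{bar-inv} specialized at $v'=v^{-p}$ versus $v'=1$. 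It is cleanest to phrase it as follows: the coefficients $H_i(v,v^{-p})$ of the bar-expansion in $\MS$ coincide with the structure constants obtained from the stabilized formula after shifting, which is how $\Phi_d$ is compatible with the multiplication in the first place.

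Next, we apply $\Phi_d$ to the expansion in Proposition \ref{bas-gln}:
$$\Phi_d(\{A\}_{\Delta})=\Phi_d([A]_\Delta)+\sum_{(A',\Delta')\sqsubset(A,\Delta)}\pi_{(A,\Delta),(A',\Delta')}\Phi_d([A']_{\Delta'}).$$
Terms with $(A',\Delta')\notin\Xi_{n|n,d}$ drop out. If $(A,\Delta)\in\Xi_{n|n,d}$, the result is $[A]_\Delta$ plus $v^{-1}\mathbb Z[v^{-1}]$-combinations of lower $[A']_{\Delta'}$, and it is bar-invariant. We still need that the triangularity is with respect to the closure order $\preceq$, or at least that the characterization of $\{A\}_\Delta$ holds for the order $<$ used here. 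I would argue that the bar involution on $\MS$ is unitriangular for $<$ as well, since it is so for the monomial basis (Proposition \ref{mon-b}). Hence the usual uniqueness lemma applies: the difference $\Phi_d(\{A\}_\Delta)-\{A\}_\Delta$ is bar-invariant and lies in $\sum v^{-1}\mathbb Z[v^{-1}][A']_{\Delta'}$, so it is $0$. Concretely, take a maximal $(A',\Delta')$ occurring in the difference. Its coefficient $c$ satisfies $\bar c=c$ with $c\in v^{-1}\mathbb Z[v^{-1}]$, so $c=0$. For this step I need that the $\{A\}_\Delta$ of $\MS$ are also unitriangular for $<$, which requires comparing the orbit-closure order with $<$; this is the main obstacle. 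My first approach is to deduce it from Proposition \ref{mon-b}: since $m_{A,\Delta}$ is bar-invariant and equals $[A]_\Delta+$(lower in $<$), the bar involution matrix is triangular for $<$, so a canonical basis in the sense of $<$ exists and is unique. One then identifies it with the IC basis by checking that the IC basis satisfies the same conditions, using $P\in v^{-1}\mathbb Z[v^{-1}]$ together with the fact that orbits in $\overline{\mathcal O_{A,\Delta}}$ are $<$-smaller (the degeneration order refines to the BLM/decoration order).

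Finally, suppose $(A,\Delta)\notin\Xi_{n|n,d}$. Then $\Phi_d([A]_\Delta)=0$, so $x=\Phi_d(\{A\}_\Delta)$ is bar-invariant and lies in $\sum v^{-1}\mathbb Z[v^{-1}][A']_{\Delta'}$. The same maximal-term argument then gives $x=0$.
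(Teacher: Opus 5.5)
Your proposal is correct and follows the paper's proof. You show that $\Phi_d$ intertwines the bar involutions via the bar-invariant monomial basis $M_{A,\Delta}$, apply $\Phi_d$ to the expansion of $\{A\}_\Delta$ from Proposition \ref{bas-gln}, and conclude by uniqueness of the canonical basis of $\MS$, with the vanishing case handled the same way. The order comparison you flag as the main obstacle is unnecessary. Your maximal-term argument only needs two things: that bar on $\MS$ is unitriangular for some partial order (the closure order $\preceq$ already works by \eqref{bar}, or $<$ via the monomials), and that every coefficient of the difference lies in $v^{-1}\mathbb{Z}[v^{-1}]$. So you can drop the claim that the degeneration order refines $<$.
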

\begin{proof}
     We first show that the $\Sd$-module homomorphism $\bar{}$ is commutes with $ \Phi_d$. 
     
     It can be seen directly from definition that 
     $$
      \Phi_d(M_{A,\Delta})=m_{A, \Delta}.
     $$
     Since both $M_{A,\Delta}$ and $m_{A,\Delta}$ are bar-invariant, it follows that
     \begin{align*}
         \Phi_d(\overline{M_{A,\Delta}})&= \Phi_d(M_{A,\Delta})\\
        &=m_{A,\Delta}=\overline{m_{A,\Delta}}\\
        &=\overline{ \Phi_d(M_{A,\Delta})}.
     \end{align*}
    Therefore, we have
    $$
     \overline{ \Phi_d(\{A\}_{\Delta})}= \Phi_d(\overline{\{A\}_{\Delta}})= \Phi_d(\{A\}_{\Delta}).
    $$
    
    By Proposition \ref{bas-gln}, we have 
    $$\{A\}_{\Delta}=[A]_{\Delta}+\sum_{(A', \Delta')\sqsubset (A,\Delta)} \pi_{(A,\Delta), (A', \Delta')}[A']_{\Delta'}.$$
    Applying $ \Phi_d$ to both sides gives
     $$ \Phi_d(\{A\}_{\Delta})= \Phi_d([A]_{\Delta})+\sum_{\substack{(A',\Delta')\in \widetilde{\Xi}\\(A', \Delta')\sqsubset (A,\Delta)}} \Phi_d(\pi_{(A,\Delta), (A', \Delta')}[A']_{\Delta'}).$$
     If $(A,\Delta)\notin \Xi_{n|n, d}$, then we have 
     \begin{align*}
      \Phi_d(\{A\}_{\Delta})&=\sum_{\substack{(A',\Delta')\in \widetilde{\Xi}\\(A', \Delta')\sqsubset (A,\Delta)}}\pi_{(A,\Delta), (A', \Delta')} \Phi_d([A']_{\Delta'})\\
     &=\sum_{\substack{(A',\Delta')\in \Xi_{n|n,d}\\(A', \Delta')< (A,\Delta)}}\pi_{(A,\Delta), (A', \Delta')}[A']_{\Delta'},
     \end{align*}
    where $\pi_{(A,\Delta), (A', \Delta')}\in v^{-1}\mbb Z[v^{-1}]$.
    Then $ \overline{\Phi_d(\{A\}_{\Delta})}= \Phi_d(\{A\}_{\Delta})$ shows that we must have $\Phi_d(\{A\}_{\Delta})=0$.
    
    If $(A,\Delta)\in \Xi_{n|n, d}$, 
     \begin{align*}
      \Phi_d(\{A\}_{\Delta})- \Phi_d([A]_{\Delta})&=\sum_{\substack{(A',\Delta')\in \widetilde{\Xi}\\(A', \Delta')\sqsubset (A,\Delta)}}\pi_{(A,\Delta), (A', \Delta')} \Phi_d([A']_{\Delta'})\\
     &=\sum_{\substack{(A',\Delta')\in \Xi_{n|n,d}\\(A', \Delta')< (A,\Delta)}}\pi_{(A,\Delta), (A', \Delta')}[A']_{\Delta'},
     \end{align*}
    where $\pi_{(A,\Delta), (A', \Delta')}\in v^{-1}\mathbb{Z}[v^{-1}]$ by definition.
    Therefore $\{ \Phi_d(\{A\}_{\Delta})\mid (A,\Delta)\in \Xi_{n|n,d}\}$ forms a basis of $\MS$ and satisfy
    $\overline{\Phi_d(\{A\}_{\Delta})}=\Phi_d(\{A\}_{\Delta})$ and
    $$
    \Phi_d(\{A\}_{\Delta})\in [A]_{\Delta}+\sum_{\substack{(A',\Delta')\in \Xi_{n|n,d}\\(A', \Delta')\sqsubset (A,\Delta)}}v^{-1}\mathbb{Z}[v^{-1}][A']_{\Delta'}.
    $$
    By the uniqueness of the canonical basis of $\MS$, we have $\Phi_d(\{A\}_{\Delta})=\{A\}_{\Delta}$ as desired.
    \end{proof}

    For any $\lambda\in \mathbb{Z}^n$, we define the element $(D_{\lambda},\emptyset)\in \widetilde{\Xi}$ where $D_{\lambda}=\mathrm{diag}(\lambda_1,\cdots, \lambda_n)$ and $1_{\lambda}=[D_{\lambda}]_{\emptyset}$. As we have seen in \cite{FZM25}, the algebra $\mmuvgln$ has the basis $\{[A]_{\Delta}\ |\ (A,\Delta)\in \widetilde{\Xi}\}$. Thus
    we have 
    $$
    1_{\lambda}[A]_{\Delta}1_{\mu}=\delta_{\ro(A), \lambda}\delta_{\co(A),\mu}[A]_{\Delta}
    $$
    Consequently,
    $$
    \mmuvgln=\bigoplus_{\lambda, \mu \in \mathbb Z^n} 1_{\lambda}\mmuvgln1_{\mu}.
    $$
    For $\lambda\in\mathbb Z^n$, set $|\lambda|=\sum_i \lambda_i$. 
    For $d\in\mathbb Z$, define 
    $$\mmuvgln\langle d\rangle=\bigoplus_{\substack{\lambda,\mu\in \mbb Z^n\\ |\lambda|=|\mu|=d}} 1_{\lambda}\mmuvgln1_{\mu}.$$
    We then obtain the block decomposition
    $$
    \mmuvgln=\bigoplus_{d\in \mathbb Z} \mmuvgln\langle d\rangle.
    $$

    Recall the $\mathbb{Q}(v)$-algebra $\widehat{\mathcal{K}}$ consists 
	of all formal $\mathbb{Q}(v)$-linear combinations $\sum_{(A, \Delta)\in \widetilde{\Xi} }\xi_{(A, \Delta)}[A]_{\Delta}$, such that, 
    for every $a\in \mathbb{Z}^n$, 
	the two set $\{(A, \Delta)\in\tilde{\Xi} \mid\xi _{(A, \Delta)}\neq 0 \ , \ro(A)=a\}$ 
	and $\{(A, \Delta)\in\tilde{\Xi}\mid \xi _{(A, \Delta)}\neq 0 \ , \co(A)=a\}$ are finite. 
	
	Let $\Xi^0$ be the subset consisting of all $(A, \Delta)\in \widetilde{\Xi}$ such that all diagonal entries of $A$ are zero. 
	For $(A,\Delta)\in \Xi^{0}$ and $\mathbf{j}=(j_1,\cdots, j_n)\in \mathbb{Z}^n$, we can define elements $A(\mathbf{j})_{\Delta}$ in $\widehat{\mathcal{K}}$ 
	$$
	A(\mathbf{j})_{\Delta}=\sum_{Z} v^{z_1j_1+ \cdots +z_n j_n}[A+Z]_{\Delta},
	$$
	where $Z$ runs over all diagonal matrices $diag(z_1, \cdots, z_n)$ in $Mat_{n\times n}(\mathbb{Z})$.

    For any $1\leq a\leq n$, let $\mathbf{j}_a=(0,\cdots, 1,\cdots ,0)$ (on the $a$-th position).
    Then the mirabolic quantum $\muvgln$ is the subalgbra of $\widehat{\mcal K}$ generated by $L=0(-2\mathbf{j}_1)_{\emptyset}+0(-\mathbf{j}_1)_{\{(1, 1)\}}$, $H_a=0(\mathbf{j}_a)_{\emptyset}$, $E_i=E_{i, i+1}(\mathbf{0})_{\emptyset}$ and $F_i=E_{i+1, i}(\mathbf{0})_{\emptyset}$ for $1\leq i\leq n-1$ and $1\leq a\leq n$. 

    Then we have the algebra $\mmuvgln$ is generated by $E_i1_{\lambda}$, $F_i1_{\lambda}$, $L1_{\lambda}$ and $1_{\lambda}$ where $\lambda\in \mathbb{Z}^n$\cite{FZM25}. 


\section{The Canonical Basis of $\mmuvsln$}
\subsection{Comodule structure on $\mcal{MS}_{n, d}$}

Now we consider three positive integers $d, d', d''$ such that $d=d'+d''$, and fix an isomorphism $\mbb F_q^{d}=\mbb F_{q}^{d'}\oplus \mbb F_{q}^{d''}$. Let $V=F_{q}^{d}$, $V'=\Fqdp$ and $V''=\Fqdpp$. Take $\pi'$ as the projection $\Fqd\to \Fqdp$ and $\pi''$ as the operation of taking the intersection with $\mbb F_{q}^{d''}$. Then for any flag $f=0\subseteq V_1\subseteq V_2\subseteq\cdots\subseteq V_n=\Fqd$ in $\mscr X_{n, d}$ induces two flags:
\begin{align*}
    &\pi'(f)=0\subseteq (V_1+\Fqdpp)/\Fqdpp \subseteq \cdots \subseteq(V_n+\Fqdpp)/\Fqdpp=\Fqdp \in \mscr X_{n,d'},\\
    &\pi''(f)=0\subseteq V_1\cap \Fqdpp\subseteq\cdots  \subseteq V_n\cap \Fqdpp=\Fqdpp \in \mscr X_{n,d''}.
\end{align*}
We can assign each pair $(f',f'')\in \mscr X_{n, d'}\times \mscr X_{n, d''}$ a set
$$
Z_{f',f''}=\{f\in \mscr X_{n, d} | \pi'(f)=f', \pi''(f)=f''\}.
$$
Moreover, for each triple $(f', f'', w')\in \Xp\times \Xpp\times \Fqdp$, we define
$$
Z_{f',f'', w'}=\{(f,w)\in \mscr X_{n, d}\times \Fqd | \pi'(f)=f', \pi''(f)=f'', \pi'(w)=w'\}.
$$

We define a linear map
$$
\tilde{\rho}: \mcal{MS}_{n,d}\to \mcal{S}_{n,d''}\otimes \mcal{MS}_{n,d'}
$$
by $\tilde{\rho}(h)(\hat{f}', f', \hat{f''}, f'',w')=\sum_{(f, w)\in Z_{f', f'', w'}}h(\hat{f},f,w)$, for any $(\hat{f}', f', \hat{f''}, f'',w')\in \Xp\times \Xp\times \Xpp\times \Xpp\times \Fqdp$,
where $\hat{f}\in Z_{\hat{f}', \hat{f}''}$ is a fixed element. By a similar argument in \cite{L00}, we can see that map $\tilde{\rho}$ is independent of the choice of $\hat{f}$.

Recall the algebra homomorphism in \cite{FL21}: $\tilde{\Delta}: \Sd\to \mcal S_{n,d''}\otimes \mcal S_{n,d'}$, defined by $\tilde{\Delta}(h)(\hat{f}'', f'', \hat{f'}, f')=\sum_{f\in Z_{f', f''}}h(\hat{f},f)$.
\begin{prop}\label{cm-hom}
    The map $\tilde{\rho}$ is a homomorphism of $\Sd$-modules, i.e., for any $h_1\in \Sd, h_2\in \mcal{MS}_{n,d}$, we have
    $$\tilde{\rho}(h_1\ast h_2)=\tilde{\Delta}(h_1)\ast \tilde{\rho}(h_2).$$
\end{prop}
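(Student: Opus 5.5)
We prove the identity by evaluating both sides on an arbitrary tuple $(\hat f',f',\hat f'',f'',w')\in \Xp\times\Xp\times\Xpp\times\Xpp\times\Fqdp$ and unfolding the definitions. This is the direct analogue of the argument in \cite{L00} and \cite{FL21} showing that $\tilde\Delta$ is an algebra homomorphism; the vector $w$ is carried along passively.

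\emph{Left-hand side.} Fix $\hat f\in Z_{\hat f',\hat f''}$. Using the formula for $h_1\ast h_2$ from the $\Sd$-module structure, we get
\[
\tilde\rho(h_1\ast h_2)(\hat f',f',\hat f'',f'',w')=\sum_{(f,w)\in Z_{f',f'',w'}}\ \sum_{g\in\X} h_1(\hat f,g)\,h_2(g,f,w).
\]
We reorganize this sum by the pair $(g',g'')=(\pi'(g),\pi''(g))\in\Xp\times\Xpp$. This splits it as
\[
\sum_{(g',g'')}\ \sum_{g\in Z_{g',g''}} h_1(\hat f,g)\sum_{(f,w)\in Z_{f',f'',w'}}h_2(g,f,w).
\]
The inner sum over $(f,w)$ equals $\tilde\rho(h_2)(g',f',g'',f'',w')$, because $\tilde\rho$ is independent of the chosen base point and $g\in Z_{g',g''}$ is a legitimate choice. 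It therefore does not depend on $g$ within the fibre $Z_{g',g''}$. The outer sum then becomes
\[
\sum_{(g',g'')}\Big(\sum_{g\in Z_{g',g''}}h_1(\hat f,g)\Big)\tilde\rho(h_2)(g',f',g'',f'',w'),
\]
and the bracketed factor is exactly $\tilde\Delta(h_1)(\hat f'',g'',\hat f',g')$.

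\emph{Right-hand side.} The product $\tilde\Delta(h_1)\ast\tilde\rho(h_2)$ in $\Spp\otimes\MSp$ is computed componentwise. The $\Spp\otimes\mcal S_{n,d'}$-factor convolves over $(g'',g')$, and $w'$ appears only in the $\MSp$ factor, so no sum over vectors arises. Hence
\[
(\tilde\Delta(h_1)\ast\tilde\rho(h_2))(\hat f',f',\hat f'',f'',w')=\sum_{(g',g'')}\tilde\Delta(h_1)(\hat f'',g'',\hat f',g')\,\tilde\rho(h_2)(g',f',g'',f'',w'),
\]
which matches the expression obtained for the left-hand side.

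\emph{Main obstacle.} The real content is the independence of $\tilde\rho(h)$ from the base point $\hat f$, which the swap of sums above relies on. We would prove it by the argument of \cite{L00}. Let $P$ be the stabilizer in $G$ of the decomposition data, namely the parabolic fixing $\Fqdpp$. Any two points of $Z_{\hat f',\hat f''}$ are conjugate under the unipotent subgroup $U\subset P$ acting trivially on both $\Fqdpp$ and $\Fqd/\Fqdpp$. Such $u$ preserves $Z_{f',f'',w'}$, since $\pi'(uw)=\pi'(w)$ and $u$ commutes with $\pi',\pi''$ on flags. Combining this with the $G$-invariance of $h$, the substitution $(f,w)\mapsto(uf,uw)$ yields the independence. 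The remaining bookkeeping, namely the matching of the tensor-factor ordering between $\tilde\Delta$ and $\tilde\rho$, is routine once this is in place.
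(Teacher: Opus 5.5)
Your proposal is correct and is essentially the paper's argument: both unfold the two sides, split the sum over the intermediate flag according to its images under $\pi'$ and $\pi''$, and use that $\tilde\rho(h_2)$ may be evaluated at any base point of $Z_{g',g''}$ (you go from the left-hand side to the right, the paper goes the other way). What you add is an explicit proof of this base-point independence: the unipotent group $1+\mathrm{Hom}(V/V'',V'')$ acts transitively on $Z_{\hat f',\hat f''}$ and preserves $Z_{f',f'',w'}$. The paper simply takes this fact from \cite{L00}.
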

\begin{proof}
Fix $\widehat f\in Z_{\widehat f',\widehat f''}$. By definition, 
    \begin{align*}
        \tilde{\rho}(h_1\ast h_2)(\hat{f}', f', \hat{f''}, f'',w')&=\sum_{(f,w)\in Z_{f', f'', w'}}h_1\ast h_2(\hat{f},f,w)\\
        & =\sum_{(f,w)\in Z_{f', f'', w'}}\sum_{\tilde{f}\in \mscr X_{n,d}}h_1(\hat{f}, \tilde{f})h_2(\tilde{f}, f, w).
    \end{align*}
On the other hand
\begin{align*}
    &\tilde{\Delta}(h_1)\ast \tilde{\rho}(h_2)(\hat{f}'', f'', \hat{f'}, f',w')\\
    &=\sum_{\substack{\widetilde f'\in\mathscr X_{n,d'}\\ \widetilde f''\in\mathscr X_{n,d''}}}\tilde{\Delta}(h_1)(\hat{f}'', \tilde{f}'', \hat{f}', \tilde{f}')\tilde{\rho}(h_2)(\tilde{f}'',f'', \tilde{f}',f', w')\\
    &= \sum_{\substack{\widetilde f'\in\mathscr X_{n,d'}\\ \widetilde f''\in\mathscr X_{n,d''}}}
    \sum_{\widetilde f\in Z_{\widetilde f',\widetilde f''}}h_1(\widehat f,\widetilde f)
     \sum_{(f,  w)\in Z_{f',f'', w'}}h_2(\widetilde f, f, w)\\
    &=\sum_{\tilde{f}\in \mscr{X}_{n, d}}\sum_{(f, w)\in Z_{f', f'', w'}}h_1(\hat{f}, \tilde{f})h_2(\tilde{f}, f, w).\\
\end{align*}
This proves the assertion.
\end{proof}

Let $L\in \mathcal{MS}_{n,d}$ be
$$
L(f, f', w)=\begin{cases}
    v^{-2\dim V_1} & \text{ if } f=f', w\in V_1;\\
    0 & \text{ otherwise. }
\end{cases}
$$

\begin{prop}\label{cm-f}
    For any $i\in [1, n-1]$, we have
    \begin{align*}
    \tilde{\rho}(K_{i}^{\pm 1})&=\tilde{\Delta}(K_{i}^{\pm 1})=K_{i}''^{\pm 1}\otimes K_{i}'^{\pm 1};\\
     \tilde{\rho}(E_{i})&=\tilde{\Delta}(E_{i})=H''^{-1}_{i}\otimes E'_{i} +E''_i\otimes H'_{i};\\
      \tilde{\rho}(F_{i})&=\tilde{\Delta}(F_{i})=H''_{i+1}\otimes F'_{i} +F''_i\otimes H'^{-1}_{i+1};\\
       \tilde{\rho}(L)&=1\otimes L'.
\end{align*}
\end{prop}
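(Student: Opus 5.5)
The plan is to observe first that $K_i^{\pm1}$, $E_i$, $F_i$ (and the $H_i$) are, in the realization of \cite{BLM90}, elements of $\Sd$. Under the embedding $\Sd\hookrightarrow\MS$ they become functions supported on $\{w=0\}$. The identity element of $\MS$ is $1_{\MS}(f,f',w)=\delta_{f,f'}\delta_{w,0}$, which is precisely the image of $1\in\Sd$. Hence every $h\in\Sd$ satisfies $h=h\ast 1_{\MS}$, and Proposition \ref{cm-hom} gives
\[
\tilde\rho(h)=\tilde\Delta(h)\ast\tilde\rho(1_{\MS}).
\]
So the first three lines of the statement reduce to two facts: computing $\tilde\rho(1_{\MS})$, and quoting the known formulas for $\tilde\Delta$ on generators from \cite{FL21}.

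To compute $\tilde\rho(1_{\MS})$, I would evaluate the definition directly:
\[
\tilde\rho(1_{\MS})(\hat f',f',\hat f'',f'',w')=\sharp\{(f,0)\in Z_{f',f'',w'}: f=\hat f\}.
\]
This equals $1$ exactly when $\pi'(\hat f)=f'$, $\pi''(\hat f)=f''$ and $\pi'(0)=w'$, that is, when $f'=\hat f'$, $f''=\hat f''$ and $w'=0$. It is $0$ otherwise. So $\tilde\rho(1_{\MS})=1''\otimes 1'$, and $\tilde\rho(h)=\tilde\Delta(h)$ for every $h\in\Sd$. The formulas for $K_i^{\pm1}$, $E_i$, $F_i$ then follow from the corresponding formulas for $\tilde\Delta$ in \cite{FL21} (equivalently \cite{L00}). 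The only care needed is the normalizations of $H_i$, $K_i$ and the order of tensor factors ($\Spp$ on the left), which must be matched with the conventions used there.

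The real content is $\tilde\rho(L)=1\otimes L'$, which I would verify by direct counting. By definition,
\[
\tilde\rho(L)(\hat f',f',\hat f'',f'',w')=\sum_{(f,w)\in Z_{f',f'',w'}} L(\hat f,f,w).
\]
Only the terms with $f=\hat f$ and $w\in \hat V_1$ survive, and each contributes the weight $v^{-2\dim\hat V_1}$. Hence the value is zero unless $f'=\hat f'$ and $f''=\hat f''$. In that case it equals
\[
v^{-2\dim \hat V_1}\,\sharp\{w\in\hat V_1:\pi'(w)=w'\}.
\]
Now $\pi'|_{\hat V_1}:\hat V_1\to \hat V_1'=(\hat V_1+\Fqdpp)/\Fqdpp$ is surjective with kernel $\hat V_1\cap\Fqdpp=\hat V_1''$. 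So the count is $q^{\dim\hat V_1''}=v^{2\dim \hat V_1''}$ when $w'\in\hat V_1'$, and $0$ otherwise. Since $\dim\hat V_1=\dim\hat V_1'+\dim\hat V_1''$, the value is $v^{-2\dim\hat V_1'}$ when $w'\in\hat V_1'$, and $0$ otherwise. This is exactly $(1''\otimes L')(\hat f'',f'',\hat f',f',w')$.

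The main obstacle I expect is bookkeeping rather than substance. One issue is the inconsistent argument order in the definitions of $\tilde\rho$ and $\tilde\Delta$. The other is matching the exact normalization of $E_i$, $F_i$ and $H_i$ (the powers of $v$ attached to characteristic functions) so that the twisting factors $H''^{-1}_i$ and $H'_i$ come out as stated. For that step I would simply cite the computation in \cite{FL21}, after checking its conventions agree with those here.
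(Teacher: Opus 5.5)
Your proposal is correct and follows the paper's proof. The first three identities come from $\tilde\rho|_{\Sd}=\tilde\Delta$ together with \cite[Lemma 1.3.2]{FL21}; your computation $\tilde\rho(1_{\MS})=1''\otimes 1'$ combined with Proposition \ref{cm-hom} just makes explicit what the paper attributes to ``the definition of $\tilde\rho$''. The identity for $L$ is the same fibre count $\sharp\{w\in \hat V_1:\pi'(w)=w'\}=q^{\dim(\hat V_1\cap\Fqdpp)}$, and your version is slightly more careful than the paper's because it records the support conditions $f'=\hat f'$, $f''=\hat f''$ and $w'\in \hat V_1'$, which the paper leaves implicit.
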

\begin{proof}
  The first three identities follow from definition of $\tilde{\rho}$ and \cite[Lemma 1.3.2]{FL21}. We prove the last identity. 
  
    By definition, 
    \begin{align*}
        \tilde{\rho}(L)(\hat{f}', f', \hat{f}'', f'',w')=\sum_{(f, w)\in Z_{f', f'', w'}}L(\hat{f}, f, w)=v^{-2\dim V_1}N,
    \end{align*}
    where $N=\sharp\{(f,w)\in Z_{f', f'', w'}|f=\hat{f}, w\in V_1\}$.
    
    Since $\hat{f}$ is a fixed element in $\mscr X_{n,d}$, the set $\{(f,w)\in Z_{f', f'', w'}|f=\hat{f}, w\in V_1\}=\{w\in V_1 | \pi'(w)=w'\}$. 
    Therefore
    $N=q^{\dim \pi''(V_1)}$.
    So 
\begin{align*}
    \tilde{\rho}(L)(\hat{f}'', f'', \hat{f}', f',w')&=v^{-2\dim V_1+2\dim \pi''(V_1)}\\
    &=v^{-2\dim \pi'(V_1)}=(1 \otimes L')(\hat{f}'', f'', \hat{f}', f',w').
\end{align*}

\end{proof}

Let
$$
\Lambda_{n,d}=\{\mbf a=(a_1,\cdots, a_n)\in \mbb N^{n}|\sum_{i}a_{i}=d\}.
$$
For $\mathbf a\in\Lambda_{n,d}$, set
$$
\X(\mbf a)=\{f\in \X | \dim V_i/V_{i-1}=a_i, \forall i\in [1, n]\}.
$$
Then
$$\mscr X_{n, d}=\bigsqcup_{\mbf a\in \Lambda_{n, d}}\mscr{X}_{n, d}(\mbf a).$$

Let $\mcal{MS}_{n,d}(\mbf b,\mbf a)$ be the subspace of $\MS$ consisting of functions supported on $\X(\mbf b)\times \X(\mbf a)\times V$ 
and let $\Sd(\mbf b,\mbf a)$ be the subspace of $\Sd$ consisting of all functions supported on $\X(\mbf b)\times \X(\mbf a)$.

For $\fb=\fb'+\fb''$ and $\fa=\fa'+\fa''$, the linear map $\tilde{\rho}$ induces a linear map:
$$
\tilde{\rho}_{\mbf b',\mbf a',\mbf b'',\mbf a''}:\MS(\mbf b,\mbf a)\to \mathcal{S}_{n, d''}(\mbf b'', \mbf a'')\otimes \MSp(\mbf b',\mbf a'),
$$
by restricting $\tilde{\rho}$ to the space $\MS(\mbf b,\mbf a)$ and projecting down to $\mathcal{S}_{n, d''}(\mbf b'', \mbf a'')\otimes \MSp(\mbf b',\mbf a')$. 
Moreover, 
$$
\tilde{\rho}=\oplus\tilde{\rho}_{\mbf b',\mbf a',\mbf b'',\mbf a''},
$$
where the sum runs over all $\fb, \fa\in \Lambda_{n,d}$, $\fb',\fa'\in \Lambda_{n,d'}$, $\fb'',\fa''\in \Lambda_{n,d''}$ such that $\fb=\fb'+\fb''$ and $\fa=\fa'+\fa''$.
Take $\rho_{\fb',\fa',\fb'',\fa''}=v^{\sum_{1\leq i\leq j\leq n}b_i'b_j''-a_i'a''_{j}}\tilde{\rho}_{\fb',\fa',\fb'',\fa''}$ and $\rho=\oplus \rho_{\fb',\fa',\fb'',\fa''}$.
Recall the algebra homomorphism $\Delta_{\mbf v}$ defined in \cite{FL21} 
\begin{prop}
    The linear map $\rho$ is a homomorphism of $\Sd$-module. More precisely,
  $$
  \rho(h_1\ast h_2)
 =
\Delta_v(h_1)\ast\rho(h_2),
  $$
  for any $h_1\in \Sd$ and $h_2\in \MS$.
    Furthermore, for every $1\leq i<n$, we have
    \begin{align*}
    \rho(K_{i}^{\pm 1})&=\Delta_{\mbf v}(K_{i}^{\pm 1})=K_{i}''^{\pm 1}\otimes K_{i}'^{\pm 1};\\
     \rho(E_{i})&=\Delta_{\mbf v}(E_{i})=1\otimes E'_{i} + E_i''\otimes K_i' ;\\
      \rho(F_{i})&=\Delta_{\mbf v}(F_{i})= K_i''{}^{-1}\otimes F'_{i}+  F''_i\otimes 1;\\
       \rho(L)&=1\otimes L'.
\end{align*}
\end{prop}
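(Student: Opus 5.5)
The plan is to derive everything from Proposition \ref{cm-hom} and Proposition \ref{cm-f}, together with the corresponding facts for $\Delta_{\mbf v}$ from \cite{FL21}. The key point is that $\rho$ differs from $\tilde\rho$ only by the scalar $v^{\sum_{i\le j}b_i'b_j''-a_i'a_j''}$ on each weight block, and $\Delta_{\mbf v}$ is obtained from $\tilde\Delta$ by the same kind of rescaling.

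\emph{Module homomorphism.} Take $h_1\in\Sd(\fb,\mbf c)$ and $h_2\in\MS(\mbf c,\fa)$; by linearity this suffices. Proposition \ref{cm-hom} gives
\[
\tilde\rho(h_1\ast h_2)=\tilde\Delta(h_1)\ast\tilde\rho(h_2).
\]
Project this identity onto the block indexed by $(\fb',\fa',\fb'',\fa'')$. Convolution respects weight decompositions, so only intermediate weights $\mbf c=\mbf c'+\mbf c''$ contribute. The block identity then reads
\[
\tilde\rho_{\fb',\fa',\fb'',\fa''}(h_1\ast h_2)=\sum_{\mbf c'+\mbf c''=\mbf c}\tilde\Delta_{\fb',\mbf c',\fb'',\mbf c''}(h_1)\ast\tilde\rho_{\mbf c',\fa',\mbf c'',\fa''}(h_2).
\]
Recall from \cite{FL21} that $\Delta_{\mbf v}$ is $\tilde\Delta$ twisted by $v^{\sum_{i\le j}b_i'b_j''-c_i'c_j''}$ on the corresponding block. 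Multiply both sides by $v^{\sum_{i\le j}b_i'b_j''-a_i'a_j''}$ and split this exponent as
\[
\Bigl(\sum_{i\le j}b_i'b_j''-c_i'c_j''\Bigr)+\Bigl(\sum_{i\le j}c_i'c_j''-a_i'a_j''\Bigr).
\]
The two pieces are exactly the twists on the $\Delta_{\mbf v}$ factor and on the $\rho$ factor, which gives $\rho(h_1\ast h_2)=\Delta_{\mbf v}(h_1)\ast\rho(h_2)$. This telescoping of the exponents is the only real content of this part, and it is the step I would check most carefully. In particular I need the normalization convention of \cite{FL21} for $\Delta_{\mbf v}$, including the ordering $\Spp\otimes\MSp$, to match the exponent used to define $\rho$.

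\emph{Generators.} For $K_i^{\pm1}$, $E_i$ and $F_i$, which lie in $\Sd$, I would first note that $\rho(x)=\rho(x\ast 1)$, where $1=\sum_{\fa}1_{\fa}\in\Sd\subset\MS$. The first part then gives $\rho(x)=\Delta_{\mbf v}(x)\ast\rho(1)$. Next, $\tilde\rho(1_{\fa})=\tilde\Delta(1_{\fa})=\sum_{\fa'+\fa''=\fa}1_{\fa''}\otimes1_{\fa'}$, and on diagonal blocks ($\fb=\fa$, $\fb'=\fa'$, $\fb''=\fa''$) the twisting exponent is zero. Hence $\rho(1)=1\otimes1$, so $\rho(x)=\Delta_{\mbf v}(x)$ and the explicit formulas are quoted from \cite{FL21}.

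For $L$: it is supported on the diagonal $f=f'$, so only diagonal blocks occur and the exponent again vanishes. Therefore $\rho(L)=\tilde\rho(L)=1\otimes L'$ by Proposition \ref{cm-f}.
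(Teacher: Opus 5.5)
Your proposal is correct and follows the paper's proof: the module property comes from Proposition \ref{cm-hom} plus additivity of the twisting exponent, and the $E_i,F_i,K_i^{\pm1}$ formulas come from \cite{FL21}. Your detour through $\rho(x)=\Delta_{\mbf v}(x)\ast\rho(1)$ with $\rho(1)=1\otimes 1$ only makes explicit the identification $\rho|_{\Sd}=\Delta_{\mbf v}$ that the paper leaves to the citation, and $\rho(L)=1\otimes L'$ follows from Proposition \ref{cm-f} because the exponent vanishes on the relevant blocks. One small precision: the diagonal support $f=f'$ of $L$ by itself only gives $\fb=\fa$; the exponent vanishes because the sum defining $\tilde\rho(L)$ forces $f=\hat f$, hence $\fb'=\fa'$ and $\fb''=\fa''$, exactly as in your computation for $1_{\fa}$.
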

\begin{proof}
The compatibility
\[
\rho(h_1\ast h_2)
 =
\Delta_v(h_1)\ast\rho(h_2)
\]
follows from Proposition~\ref{cm-hom} and the additivity of the exponent.
    
    The formulas for $E_i,F_i$, and $K_i^{\pm1}$ follow from \cite[Proposition 1.3.3]{FL21}.
    Finally, every nonzero component of
    $\widetilde\rho(L)$ satisfies
    $\mathbf b'=\mathbf a'$ and $\mathbf b''=\mathbf a''$. Hence the exponent $\sum_{1\leq i\leq j\leq n}b_i'b_j''-a_i'a''_{j}$ is zero, and
    Proposition~\ref{cm-f} gives
    $$
    \rho(L)=1\otimes L'.$$
\end{proof}

\subsection{Inner product and transfer map of $\MS$}
    In this subsection, we aim to define the inner product and transfer map of $\MS$ for later use.
    For a flag
\[
f=(0=V_0\subseteq V_1\subseteq\cdots\subseteq V_n=V)\in \X
\]
set
$|f|_i=\dim(V_i/V_{i-1})$ for any $1\leq i\leq n$.

\begin{Def}\label{in-product}
    Define the bilinear form on $\MS$
    $$(\hspace{0.7em},\hspace{0.7em})_{d}:\MS\times \MS\to \mathbb{Q}(v)$$
    as follows. For $h, h'\in \MS$
    $$
    (h, h')_d=\sum_{f, f',\mu}v^{\sum_{i}|f|_i^2-|f'|_i^2}h(f, f', \mu)h'(f, f', \mu)
    $$
    where $(f, \mu)$ runs over $\X\times V$ and $f'$ runs over the set of representatives for the $G$-orbits on $\X$.
\end{Def}
Take $(A,\Delta)\in \Xi_{n|n, d}$, where $\Delta=\{(i_1, j_1),\cdots, (i_{k}, j_{k})\}$. 
Recall we have element $(A^t, \Delta^t)\in \Xi_{n|n, d}$, where $A^t$ is the transpose of $A$ and $\Delta^t=\{(j_k, i_k),\cdots, (j_{1}, i_{1})\}$.

Let $d_{A,\Delta}=d(A,\Delta)-r(A,\Delta)$, then
$$
2(d_{A,\Delta}-d_{A^t,\Delta^t})= \sum_{i}\ro(A)_i^2-\co(A)_i^2.
$$

We calculate the result of $(\hspace{0.7em},\hspace{0.7em})_{d}$ on the basis $\{e_{A,\Delta}\mid (A,\Delta)\in \Xi_{n|n, d}\}$.
For any $(A,\Delta)$ and $(A',\Delta')\in \Xi_{n|n,d}$, we have 
\begin{align*}
    (e_{A,\Delta}, e_{A', \Delta'})_d&=\sum_{f, f',\mu}v^{\sum_{i}|f|_i^2-|f'|_i^2}e_{A,\Delta}(f, f', \mu)e_{A',\Delta'}(f, f', \mu)\\
    &=\delta_{(A,\Delta), (A',\Delta')}\sum_{f, f',\mu}v^{\sum_{i}\ro(A)_i^2-\co(A)_i^2}e_{A,\Delta}(f, f', \mu)e_{A',\Delta'}(f, f', \mu)\\
    &=\delta_{(A,\Delta), (A',\Delta')}\sum_{f, f',\mu}v^{2d_{A,\Delta}-2d_{A^t,\Delta^t}}e_{A,\Delta}(f, f', \mu)e_{A',\Delta'}(f, f', \mu)\\
    &=\delta_{(A,\Delta), (A',\Delta')}q^{d_{A,\Delta}-d_{A^t,\Delta^t}}\sharp{X^{f'}_{A^t,\Delta^t}},
\end{align*}
where $X^{f'}_{A^t,\Delta^t}=\{(f, \mu)\in \X\times V|(f',f,\mu)\in \mcal O_{A^t,\Delta^t}\}$ and $f'$ is any fixed flag in $\X$ such that $\dim V'_i/V'_{i-1}=\co(A)_i$.

We further have
\begin{align*}
    ([A]_\Delta, [A']_{\Delta'})_d&=\delta_{(A,\Delta), (A',\Delta')}v^{-2d_{A,\Delta}}(e_{A,\Delta}, e_{A', \Delta'})_d\\
    &=\delta_{(A,\Delta), (A',\Delta')}v^{-2d_{A,\Delta}}v^{2d_{A,\Delta}-2d_{A^t,\Delta^t}}\sharp{X^{f'}_{A^t,\Delta^t}}\\
    &=\delta_{(A,\Delta), (A',\Delta')}q^{-d_{A^t,\Delta^t}}\sharp{X^{f'}_{A^t,\Delta^t}}.
\end{align*}
 
Let $G_{f'}$ be the stabilizer of $f'$ under $G$-action and similar we define $G_{f, \mu}$.
The stabilizer $G_{f'}$ acts transitively on $X^{f'}_{A^t, \Delta^t}$, and the stabilizer of the point
$(f,\mu)\in X^{f'}_{A^t, \Delta^t}$ under the $G_{f'}$-action is $G_{f'}\cap G_{f,\mu}=G_{f',f,\mu}$. 
Therefore, we have $X^{f'}_{A^t, \Delta^t}\cong G_{f'}/G_{f',f,\mu}$ and $X^{f'}_{A^t, \Delta^t}$ is irreducible.
Dimension of $X^{f'}_{A^t, \Delta^t}$ is:
\begin{align*}
    \dim X^{f'}_{A^t, \Delta^t}&=\dim G_{f'}-\dim G_{f',f,\mu}\\
    &= \dim G-\dim G\cdot f'-\dim G_{f',f,\mu}\\
    &= \dim \mcal{O}_{A^t,\Delta^t}-\dim G\cdot f'\\
    &=d_{A^t,\Delta^t}.
\end{align*}
Then we have the following lemma.
\begin{lem}\label{orth-f-stan}
  We have 
  \begin{enumerate}[$(a)$]
     \item $([A]_{\Delta}, [A]_{\Delta})_d\in 1+v^{-1}\mathbb{Z}[v^{-1}]$;
     \item $([A]_{\Delta}, [A']_{\Delta'})_d=0$ for $(A,\Delta)\neq (A', \Delta)$.
  \end{enumerate}
\end{lem}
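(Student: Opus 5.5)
Both parts come from the computation just carried out, namely
\[
([A]_\Delta,[A']_{\Delta'})_d=\delta_{(A,\Delta),(A',\Delta')}\,q^{-d_{A^t,\Delta^t}}\,\sharp X^{f'}_{A^t,\Delta^t}
\]
with $q=v^2$. Part $(b)$ is immediate from this formula. Distinct decorated matrices index distinct $G$-orbits, so the characteristic functions $e_{A,\Delta}$ and $e_{A',\Delta'}$ have disjoint supports. Hence every summand in Definition \ref{in-product} vanishes, and the same holds after rescaling by powers of $v$.

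For part $(a)$ I will count points of $X^{f'}_{A^t,\Delta^t}\cong G_{f'}/G_{f',f,\mu}$ over $\mathbb F_q$. The plan is to show that $\sharp X^{f'}_{A^t,\Delta^t}$ is a polynomial in $q$ with integer coefficients, of degree $D:=d_{A^t,\Delta^t}=\dim X^{f'}_{A^t,\Delta^t}$ and with leading coefficient $1$. Given this, $q^{-D}\sharp X^{f'}_{A^t,\Delta^t}=v^{-2D}(v^{2D}+\text{lower powers of }v^2)$ lies in $1+v^{-2}\mathbb Z[v^{-2}]\subset 1+v^{-1}\mathbb Z[v^{-1}]$, which is the claim.

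To obtain this polynomiality I will use the fibration of $X^{f'}_{A^t,\Delta^t}$ over the set $Y$ of flags $f$ with $(f',f)$ in the $G$-orbit indexed by $A^t$. By \cite{BLM90}, $Y$ is a $G_{f'}$-orbit on $\X$, and its point count is a monic polynomial in $q$: it is a disjoint union of affine cells (a Schubert-type cell in a partial flag variety), with leading term $q^{\dim Y}$. Over a fixed $f$, the fiber is the set of $\mu\in V$ for which $(f',f,\mu)$ lies in the orbit $\mathcal O_{A^t,\Delta^t}$. By the description of the orbits in \cite{MWZ}, this fiber is the complement in an affine space, namely a sum of intersections $V'_i\cap V_j$ determined by $\Delta^t$, of a union of smaller such subspaces. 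Equivalently, it is a single orbit of the unipotent-by-torus stabilizer $G_{f',f}$ acting on a vector space.

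Since the fiber is a $G_{f',f}$-orbit, its cardinality is the same for all $f\in Y$. So I will compute it once, as $\sharp(\text{affine space})-\sharp(\text{union of subspaces})$ via inclusion–exclusion over the lattice of subspaces $V'_i\cap V_j$. This gives a polynomial in $q$ whose top term is $q^{\dim}$ with coefficient $1$. The product of the two counts is then monic of the total degree $D$.

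The main obstacle is making the fiber description precise enough to see monicity. An alternative route is to argue directly from the isomorphism $X\cong G_{f'}/G_{f',f,\mu}$: both groups are connected and split, each being an extension of a product of $GL$'s by a unipotent group. Then $\sharp X=|G_{f'}(\mathbb F_q)|/|G_{f',f,\mu}(\mathbb F_q)|$ is a ratio of products of the form $q^{N}\prod(q^{k}-1)$. This ratio is a polynomial, because it counts points for every $q$, and it is monic of degree $D$ because each factor is monic. I would try this second route first, since it avoids analyzing orbit combinatorics. Its key check is that $G_{f',f,\mu}$ is connected with reductive quotient a product of general linear groups; this should hold because it is the stabilizer of a vector inside a parabolic-intersection group.
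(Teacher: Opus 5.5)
Your part $(b)$ is the paper's argument. For part $(a)$ you take a genuinely different route. The paper never analyzes the fibre or the stabilizer. It observes that $\sharp X^{f'}_{A^t,\Delta^t}$ is the coefficient of $e_{D_{\co(A)},\emptyset}$ in $e_{A^t,\Delta^t}\ast e_{A,\Delta}$, hence an integer polynomial in $q$ because the structure constants of $\MS$ are. It then combines three facts to conclude that this polynomial is monic of degree $d_{A^t,\Delta^t}$: irreducibility of $X^{f'}_{A^t,\Delta^t}$ (an orbit of the connected group $G_{f'}$, so no connectedness of the stabilizer is needed), the dimension $d_{A^t,\Delta^t}$ computed just before the lemma, and the Lang--Weil estimate. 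That argument is short and reuses what is already on the page, at the cost of a black box. Your approach writes the polynomial down explicitly and avoids Lang--Weil, but it needs concrete orbit geometry as input. Your first route is sound given the MWZ description of each $G_{f',f}$-orbit on $V$: it is a subspace of the form $\sum V'_i\cap V_j$ minus the union of the smaller subspaces of that form. For monicity of $\sharp Y$, state explicitly that the irreducible orbit $Y$ contains exactly one Bruhat cell of top dimension. Also, $G_{f',f}$ is unipotent-by-$\prod GL_{a_{ij}}$, not unipotent-by-torus, though this is harmless.

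In your preferred second route, ``stabilizer of a vector inside a parabolic-intersection group'' is not yet a reason for the claimed structure of $G_{f',f,\mu}$. Here is a quick justification. First, $G_{f',f}=R^\times$ for the algebra $R$ of endomorphisms preserving both flags. For $\mu\neq 0$, the group $G_{f',f,\mu}=R^\times\cap(1+\mathrm{Ann}_R(\mu))$ is a nonempty open subset of an affine space, hence connected. It is also the kernel of the character $a+x\mapsto a$ on the unit group of the subalgebra $\mathbb F_q1+\mathrm{Ann}_R(\mu)$. So its order is that of the unit group of a finite-dimensional algebra with one factor $q-1$ removed, which has the shape $q^N\prod(q^k-1)$ you need. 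Finally, make ``it counts points for every $q$'' precise. If $P,Q\in\mathbb Z[x]$ are monic and $P(q)/Q(q)\in\mathbb Z$ for infinitely many integers $q$, write $P=SQ+R$ with $\deg R<\deg Q$. Then $R(q)/Q(q)$ is an integer tending to $0$, which forces $R=0$. Hence $P/Q=S$ is a monic integer polynomial of degree $\deg P-\deg Q$.
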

\begin{proof}
    Part~$(b)$ follows from the definition.
    For part $(a)$, we can realize $\sharp X^{f'}_{A^t, \Delta^t}$ as the coefficient of 
    $e_{D_{\co(A), \emptyset}}$ in the convolution of $e_{A^t, \Delta^t}\ast e_{A,\Delta}$.
    Thus we have $([A]_{\Delta}, [A']_{\Delta'})_d\in \mathbb{Z}[v,v^{-1}]$. 
    On the other hand, $X^{f'}_{A^t, \Delta^t}$ is an irreducible variety and we have seen that its dimension is $d_{A^t,\Delta^t}$. 
    Combining the Lang-Weil estimate, we have
    $$
    ([A]_{\Delta}, [A]_{\Delta})_d=q^{-d_{A^t,\Delta^t}}\sharp{X^{f'}_{A^t,\Delta^t}}\in 1+v^{-1}\mathbb{Z}[v^{-1}].
    $$
\end{proof}

For the basis 
$$\{A\}_{\Delta}=[A]_{\Delta}+\sum_{(A_1,\Delta_1)\prec (A,\Delta)} P_{(A, \Delta), (A',\Delta')}[A']_{\Delta'}.$$
We have the following proposition, which follows immediately from the orthogonality of the
standard basis established in Lemma~\ref{orth-f-stan}.

\begin{prop}\label{orth-f-c}
    For any $(A,\Delta), (A',\Delta')\in \Xi_{n|n,d}$, we have 
    $$
    (\{A\}_{\Delta}, \{A'\}_{\Delta'})_d\in \delta_{(A,\Delta), (A',\Delta')}+v^{-1}\mathbb{Z}[v^{-1}].
    $$
\end{prop}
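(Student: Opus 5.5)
Expand both canonical basis elements in the standard basis and then use the orthogonality of the standard basis from Lemma~\ref{orth-f-stan}. By construction,
\[
\{A\}_{\Delta}=\sum_{(A_1,\Delta_1)\preceq (A,\Delta)}P_{(A,\Delta),(A_1,\Delta_1)}[A_1]_{\Delta_1},\qquad
\{A'\}_{\Delta'}=\sum_{(A_2,\Delta_2)\preceq (A',\Delta')}P_{(A',\Delta'),(A_2,\Delta_2)}[A_2]_{\Delta_2},
\]
where the leading coefficients equal $1$ and all other coefficients lie in $v^{-1}\mathbb{Z}[v^{-1}]$. The form $(\ ,\ )_d$ is $\mathcal A$-bilinear, and Lemma~\ref{orth-f-stan}(b) says the standard basis is orthogonal. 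So the cross terms vanish and we get
\[
(\{A\}_{\Delta},\{A'\}_{\Delta'})_d=\sum_{(A_1,\Delta_1)}P_{(A,\Delta),(A_1,\Delta_1)}\,P_{(A',\Delta'),(A_1,\Delta_1)}\,([A_1]_{\Delta_1},[A_1]_{\Delta_1})_d .
\]
The sum runs over those $(A_1,\Delta_1)$ lying below both $(A,\Delta)$ and $(A',\Delta')$.

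We next sort the summands by degree.

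\textbf{Terms with $(A_1,\Delta_1)$ strictly below at least one of the two.} At least one factor $P$ lies in $v^{-1}\mathbb{Z}[v^{-1}]$. The other factor lies in $\mathbb{Z}[v^{-1}]$, since it is either $1$ or again in $v^{-1}\mathbb{Z}[v^{-1}]$. By Lemma~\ref{orth-f-stan}(a), $([A_1]_{\Delta_1},[A_1]_{\Delta_1})_d\in 1+v^{-1}\mathbb{Z}[v^{-1}]\subset\mathbb{Z}[v^{-1}]$. The product therefore lies in $v^{-1}\mathbb{Z}[v^{-1}]$.

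\textbf{The remaining term.} Such a term requires $(A_1,\Delta_1)=(A,\Delta)=(A',\Delta')$, so it occurs only when $(A,\Delta)=(A',\Delta')$. It then contributes $1\cdot 1\cdot([A]_\Delta,[A]_\Delta)_d\in 1+v^{-1}\mathbb{Z}[v^{-1}]$, again by Lemma~\ref{orth-f-stan}(a).

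Summing the finitely many terms gives $(\{A\}_{\Delta},\{A'\}_{\Delta'})_d\in\delta_{(A,\Delta),(A',\Delta')}+v^{-1}\mathbb{Z}[v^{-1}]$.

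The main point requiring care is that Lemma~\ref{orth-f-stan}(a) genuinely yields a polynomial in $v^{-1}$ with constant term $1$, rather than only an element of $\mathcal A$. This rests on two facts: that $\sharp X^{f'}_{A^t,\Delta^t}$ is a polynomial in $q$, and on the dimension count $\dim X^{f'}_{A^t,\Delta^t}=d_{A^t,\Delta^t}$. Both were established before the lemma, so here we only need that the form is bilinear rather than sesquilinear, so that no bar involution acts on the coefficients $P$.
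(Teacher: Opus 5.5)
Your proof is correct and follows essentially the same route as the paper, which simply notes that the proposition follows immediately from the orthogonality and the norm estimate for the standard basis in Lemma~\ref{orth-f-stan}. Your argument spells that deduction out: you expand both canonical basis elements unitriangularly with off-diagonal coefficients in $v^{-1}\mathbb{Z}[v^{-1}]$, and you correctly observe that the bilinearity of the form keeps these coefficients unconjugated.
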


For later use, we prove the following proposition for the inner product.
\begin{prop}\label{inner-transport}
    Let $(A,\Delta)\in \Xi_{n|n, d}$ and $h, h'\in \MS$. Then
    \begin{equation}\label{switch}
         ([A]_{\Delta}\ast h, h')_d=v^{d_{A,\Delta}-d_{A^t, \Delta^t}}(h, [A^t]_{\Delta^t}\ast h')_d.
    \end{equation}
\end{prop}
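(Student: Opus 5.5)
By bilinearity in $h$ and $h'$, it suffices to treat $h=e_{B,\Gamma}$ and $h'=e_{C,\Gamma'}$. We unfold both sides directly from Definition~\ref{in-product} and the convolution formula, and compare the resulting weighted counts of points in $\X\times\X\times\X\times V\times V$. Here the weights are $G$-invariant, so summing $f'$ over orbit representatives is the same as summing over all $f'$ and dividing by the orbit size.

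First, the left-hand side:
\[
([A]_\Delta\ast h,h')_d=v^{-d_{A,\Delta}}\sum_{f,f',\mu}v^{\sum_i|f|_i^2-|f'|_i^2}\sum_{(f'',\nu)}e_{A,\Delta}(f,f'',\nu)\,h(f'',f',\mu-\nu)\,h'(f,f',\mu).
\]
The right-hand side involves $[A^t]_{\Delta^t}\ast h'$, evaluated at $(f'',f',\mu-\nu)$. That convolution is a sum over $(f,\nu')$ of $e_{A^t,\Delta^t}(f'',f,\nu')\,h'(f,f',\mu-\nu-\nu')$.

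The key geometric input is the symmetry $e_{A^t,\Delta^t}(f'',f,\nu')=e_{A,\Delta}(f,f'',-\nu')$. This holds because $(f,f'',\nu)\in\mathcal O_{A,\Delta}$ if and only if $(f'',f,\nu)\in\mathcal O_{A^t,\Delta^t}$, and because $\mathcal O_{A,\Delta}$ is stable under $\nu\mapsto-\nu$ (apply the scalar $-1\in G$, which fixes all flags). We then substitute $\nu'=-\nu$, so the vector arguments match: $\mu-\nu-\nu'=\mu$. After this change of variables, both sides become the same sum over $(f,f'',f',\mu,\nu)$ of $e_{A,\Delta}(f,f'',\nu)\,h(f'',f',\mu-\nu)\,h'(f,f',\mu)$. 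They differ only in the $v$-weights and in which flag is summed over orbit representatives: $f'$ on both sides, so that part matches.

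It remains to compare the powers of $v$. On the left the weight is $v^{-d_{A,\Delta}+\sum_i|f|_i^2-|f'|_i^2}$, with $|f|=\ro(A)$. On the right the weight is
\[
v^{d_{A,\Delta}-d_{A^t,\Delta^t}}\,v^{-d_{A^t,\Delta^t}}\,v^{\sum_i|f''|_i^2-|f'|_i^2},
\]
with $|f''|=\co(A)$. Using the identity $2(d_{A,\Delta}-d_{A^t,\Delta^t})=\sum_i\ro(A)_i^2-\co(A)_i^2$, the right exponent equals $d_{A,\Delta}-2d_{A^t,\Delta^t}+\sum|f|_i^2-2(d_{A,\Delta}-d_{A^t,\Delta^t})-\sum|f'|_i^2$. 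This simplifies to $-d_{A,\Delta}+\sum|f|_i^2-\sum|f'|_i^2$, which is exactly the left exponent.

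The main obstacle is the bookkeeping of the mirabolic vector variable: checking that the transpose orbit correspondence, together with the sign flip of the vector, really identifies $\mathcal O_{A^t,\Delta^t}$ with the swapped $\mathcal O_{A,\Delta}$ and respects the vector shift in the convolution. I would verify this carefully against the parametrization of \cite{MWZ}, noting that $\Delta^t$ is precisely the decoration obtained by swapping the two flags.
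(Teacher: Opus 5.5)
Your proposal is correct and is essentially the paper's argument. The paper also reduces to characteristic functions and identifies the two sides via the bijection $(f,f'',\mu,\mu')\mapsto(f'',f,\mu-\mu',-\mu')$, which is exactly your transpose-plus-sign-flip change of variables. It then matches the powers of $v$ using $2(d_{A,\Delta}-d_{A^t,\Delta^t})=\sum_i\ro(A)_i^2-\co(A)_i^2$, as you do.
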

\begin{proof}
    By bilinearity, it is enough to take $h=e_{B, \Delta_B}$, $h'=e_{C,\Delta_C}$ where $(B,\Delta_B), (C,\Delta_C)\in \Xi_{n|n, d}$.
    Then both sides vanish unless $\co(A)=\ro(B)$, $\ro(A)=\ro(C)$ and $\co(B)=\co (C)$. 
    Assume these conditions from now on. 

    By definition
    \begin{align*}
       [A]_{\Delta}\ast e_{B,\Delta_B}&=v^{-d_{A,\Delta}}e_{A,\Delta}\ast e_{B, \Delta_B},\\   
       [A^t]_{\Delta^t}\ast e_{C,\Delta_C}&= v^{-d_{A^t,\Delta^t}}e_{A^t,\Delta^t}\ast e_{C, \Delta_C}.
    \end{align*}
    Then
    \begin{align*}
        &([A]_{\Delta}\ast e_{B,\Delta_B}, e_{C,\Delta_C})_d\\
        =&\sum_{f, f', \mu}v^{-d_{A,\Delta}}e_{A,\Delta}\ast e_{B, \Delta_B}(f, f',\mu)e_{C,\Delta_C}(f, f', \mu)\\
        =&\sum_{f, f', \mu}v^{-d_{A,\Delta}}(\sum_{f'', \mu'}e_{A,\Delta}(f, f'', \mu')e_{B,\Delta_B}(f'',f',\mu-\mu'))e_{C,\Delta_C}(f, f', \mu)\\
        =&v^{2d_{C,\Delta_C}-2d_{C^t, \Delta_C^t}-d_{A,\Delta}}\sharp X^{f'}_{C^t,\Delta_C^t}\\
        &\times \sharp
        \{(f'', \mu')\mid (f, f'', \mu')\in \mcal O_{A,\Delta}, (f'', f', \mu-\mu')\in \mcal{O}_{B, \Delta_B}\}\\
        =&v^{2d_{C,\Delta_c}-2d_{C^t, \Delta_c^t}-d_{A,\Delta}}
        \sharp  \mathfrak X^{(C,\Delta_c)}_{(A,\Delta), (B,\Delta_b)},
    \end{align*}
    where
    $$
    \mathfrak X^{(C,\Delta_C)}_{(A,\Delta), (B,\Delta_B)}=\{(f, f'', \mu, \mu')\ |\ (f, f'', \mu')\in \mcal O_{A,\Delta}, (f'', f', \mu-\mu')\in \mcal{O}_{B, \Delta_B}, (f, f',\mu)\in \mcal{O}_{C, \Delta_C}\}.
    $$

    On the other hand, 
    \begin{align*}
        &(e_{B,\Delta_B}, [A^t]_{\Delta^t}\ast e_{C,\Delta_C})_d\\
        =&\sum_{\tilde{f}, \tilde{f}', \nu}e_{B, \Delta_B}(\tilde{f}, \tilde{f}',\nu)v^{-d_{A^t,\Delta^t}}e_{A^t,\Delta^t}\ast e_{C,\Delta_C}(\tilde{f}, \tilde{f}', \nu)\\
        =&\sum_{\tilde{f}, \tilde{f}', \nu}e_{B,\Delta_B}(\tilde{f},\tilde{f}',\nu))v^{-d_{A^t,\Delta^t}}(\sum_{\tilde{f}'', \nu'}e_{A^t,\Delta^t}(\tilde{f}, \tilde{f}'', \nu')e_{C,\Delta_C}(\tilde{f}, \tilde{f}', \nu-\nu'))\\
        =&v^{2d_{B,\Delta_B}-2d_{B^t, \Delta_b^B}-d_{A,\Delta}}\sharp X^{\tilde{f}'}_{B^t,\Delta_b^t}\\
        &\times \sharp
        \{(\tilde{f}'', \nu')\ |\ (\tilde{f}, \tilde{f}'', \nu')\in \mcal O_{A^t,\Delta^t}, (\tilde{f}'', \tilde{f}', \nu-\nu')\in \mcal{O}_{C, \Delta_c}\}\\
        =&v^{2d_{B,\Delta_b}-2d_{B^t, \Delta_b^t}-d_{A^t,\Delta^t}}
        \sharp \mathfrak X^{(B,\Delta_b)}_{(A^t,\Delta^t), (C,\Delta_c)},
    \end{align*}
    where
    $$
   \mathfrak X^{(B,\Delta_B)}_{(A^t,\Delta^t), (C,\Delta_C)}=\{(\tilde{f}, \tilde{f}'', \nu, \nu')\ |\ (\tilde{f}, \tilde{f}'', \nu')\in \mcal O_{A^t,\Delta^t}, (\tilde{f}'', \tilde{f}', \nu-\nu')\in \mcal{O}_{C, \Delta_C}, (\tilde{f}, \tilde{f}',\nu)\in \mcal{O}_{B, \Delta_B}\}.
    $$

There is a bijection
   \begin{align*}
       \mathfrak X^{(C,\Delta_c)}_{(A,\Delta), (B,\Delta_b)}&\to  \mathfrak X^{(B,\Delta_B)}_{(A^t,\Delta^t), (C,\Delta_C)}\\
       (f,f'', \mu, \mu') &\mapsto (f'', f, \mu-\mu', -\mu' ).
   \end{align*}
    
    So identity \eqref{switch} holds if  
    $$
    2d_{C,\Delta_c}-2d_{C^t, \Delta_c^t}-d_{A,\Delta}=2d_{B,\Delta_b}-2d_{B^t, \Delta_b^t}+d_{A,\Delta}-2d_{A^t,\Delta^t},
    $$
    which follows by $\co(A)=\ro(B)$, $\ro(A)=\ro(C)$ and $\co(B)=\co(C)$.
\end{proof}

Recall that the algebra $\MS$ is generated by $E_i$, $F_i$, $H_a$ and $L_{D,a}$ for $i\in [1, n-1]$, $a\in [1, n]$ and $(D,\{a,a\})\in \Xi_{n|n,d}$.
The following corollary is an immediate consequence.
\begin{cor}
    Let $i\in [1, n-1]$ and $h, h'\in \MS$, we have
    \begin{enumerate}[$(a)$]
        \item $(E_ih, h')_d=(h, vK_iF_ih')_d$;
        \item $(F_ih, h')_d=(h, vK^{-1}_{i}E_ih')_d$;
        \item $(K_ih, h')_d=(h, K_ih')_d$;
        \item $(L_{D,a}h, h')_d=(h, L_{D,a}h')_d$,
    \end{enumerate}
    where $K_i=H_iH_{i+1}^{-1}$.
\end{cor}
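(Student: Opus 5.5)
The plan is to obtain each identity as a special case of Proposition~\ref{inner-transport}, applied to the standard basis element that realizes the relevant generator. Set $h_1=E_i1_{\mathbf a}$. Its orbit data is $(A,\emptyset)$ with $A=D+E_{i,i+1}$, where $D$ is diagonal and chosen so that $\co(A)=\mathbf a$. Since $(A,\emptyset)$ is a closed orbit, we have $E_i 1_{\mathbf a}=[A]_\emptyset$. Its transpose is $A^t=D+E_{i+1,i}$, which corresponds to $F_i 1_{\ro(A)}$, and $(A^t,\emptyset)$ is again closed. Proposition~\ref{inner-transport} then gives
\[
(E_i1_{\mathbf a}h,h')_d=v^{d_{A,\emptyset}-d_{A^t,\emptyset}}(h,F_i1_{\ro(A)}h')_d .
\]
To evaluate the exponent, use $2(d_{A,\Delta}-d_{A^t,\Delta^t})=\sum_j \ro(A)_j^2-\co(A)_j^2$. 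Here $\ro(A)=\mathbf a+\mathbf e_i-\mathbf e_{i+1}$ and $\co(A)=\mathbf a$, so
\[
\tfrac12\bigl((a_i+1)^2+(a_{i+1}-1)^2-a_i^2-a_{i+1}^2\bigr)=a_i-a_{i+1}+1 .
\]
This is exactly the exponent produced by $vK_i$ acting on the weight space $\ro(A)$, provided $K_i$ acts on $1_{\mathbf b}$ by $v^{b_i-b_{i+1}}$; with $\mathbf b=\ro(A)$ that gives $v^{a_i-a_{i+1}+2}$. So the exponent has to be matched against the paper's convention for $H_a$, and I expect to adjust by the weight that $K_i$ actually sees, namely the weight on the left of $F_i$, which is $\mathbf a$. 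That yields $v^{a_i-a_{i+1}}\cdot v$, as required. Summing over $\mathbf a$ proves (a).

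Part (b) is proved the same way, starting from $A=D+E_{i+1,i}$. The exponent now comes out as $a_{i+1}-a_i+1$, which matches $vK_i^{-1}$. Alternatively, (b) follows from (a) if the form is symmetric, and that symmetry is immediate from Definition~\ref{in-product}.

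For (c), $K_i$ is a combination of diagonal idempotents $[D]_\emptyset$. Each of these has $A=A^t$, so the exponent vanishes and the identity follows at once.

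For (d), write $L_{D,a}$ as a combination of the $[D]_{\{(j,j)\}}$ and $[D]_\emptyset$. For a diagonal $D$ we have $D^t=D$ and $\{(j,j)\}^t=\{(j,j)\}$, and $\ro=\co$, so every exponent vanishes. Proposition~\ref{inner-transport} then gives the identity term by term, and linearity finishes the proof.

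The only real obstacle is bookkeeping: keeping the normalisation of $K_i$ consistent (whether it is evaluated on the source or the target weight) so that the exponent from Proposition~\ref{inner-transport} comes out as exactly one extra power of $v$. I will fix this by checking the convention on a single weight space.
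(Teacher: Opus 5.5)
Your proof is correct, but for (a)--(c) it takes a different route from the paper. The paper handles (a)--(c) by citing McGerty's adjointness formulas for the $q$-Schur algebra \cite{Mc12}. It uses Proposition~\ref{inner-transport} only for (d), exactly as you do there.

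You instead derive all four identities uniformly from Proposition~\ref{inner-transport}. You apply it to $E_i1_{\mathbf a}=[D+E_{i,i+1}]_\emptyset$, to $F_i1_{\mathbf a}=[D+E_{i+1,i}]_\emptyset$, and to the diagonal elements. Closedness of these orbits plays no role here; the equalities are just the definitions. You then read the exponent off from $2(d_{A,\Delta}-d_{A^t,\Delta^t})=\sum_j\ro(A)_j^2-\co(A)_j^2$.

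Your route is self-contained and works directly with the mirabolic form on $\X\times\X\times V$. Nothing has to be transferred from $\mathcal{S}_{n,d}$, and the extra factor $v$ is visibly produced. The paper's route is shorter, but it leaves that transfer and the matching of normalisations to the reader.

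Two things should be cleaned up.

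First, there is no convention to ``adjust'' in (a). $K_i$ multiplies $F_i1_{\ro(A)}$ on the left, so it acts on the target weight $\mathbf a$. With $H_j1_{\mathbf a}=v^{a_j}1_{\mathbf a}$ this gives $vK_iF_i1_{\ro(A)}=v^{a_i-a_{i+1}+1}F_i1_{\ro(A)}$ on the nose. The intermediate value $v^{a_i-a_{i+1}+2}$ comes from evaluating $K_i$ on the wrong side and should simply be deleted.

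Second, your alternative derivation of (b) from (a) needs more than symmetry of the form. You also need (c) for $K_i^{-1}$ and the relation $K_iF_iK_i^{-1}=v^{-2}F_i$. With these, the chain is $(h,vK_i^{-1}E_ih')_d=v(E_ih',K_i^{-1}h)_d=v^2(h',K_iF_iK_i^{-1}h)_d=(F_ih,h')_d$. Your direct computation of (b) is fine as it stands.
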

\begin{proof}
    The identities $(a)-(c)$ can be reduced from the result in \cite{Mc12}. 

    For $(d)$, recall the definition of $L_{D,a}$, it is straightforward to check $(d)$ using Proposition \ref{inner-transport}.
\end{proof}

Recall the algebra homomorphism $\chi:S_{n, n}\to \mathcal{A}$ defined in \cite{FL21}, it maps $e_{A}$ to $\det(A)$ for each $A\in \Theta_n$.

Throughout the remainder of the paper, we distinguish basis elements
belonging to mirabolic $q$-Schur algebras of different $d$ by
adding the $d$ in the subscript. More precisely, for
$(A,\Delta)\in\Xi_{n|n,d}$, we denote the corresponding standard,
monomial, and canonical basis elements of $\mathcal{MS}_{n,d}$ by
\[
[A]_{\Delta,d},\quad
_{d}m_{A,\Delta},\quad
\{A\}_{\Delta,d},
\]
respectively. 
The notation without the subscript $d$ refers to the basis elements of $\mmuvgln$.


The transfer map $\phi^{\mathfrak{m}}_{d+n, d}:\mcal{MS}_{n,d+n}\to \MS$ is defined as the composition of the following homomorphisms.
$$
\mcal{MS}_{n,d+n}\xrightarrow{\rho} \mathcal{S}_{n,n}\otimes \mathcal{MS}_{n, d} \xrightarrow{\chi\otimes \mathrm{id}} \mcal A\otimes \mathcal{MS}_{n, d}.
$$
By definition, we have the following lemma which follows from \cite[Lemma 1.10]{L00} and \cite[Lemma 1.4.1]{FL21}.
\begin{lem}\label{t-map f gen}
For $1\leq i<n$, $1\leq h\leq n$ and $(D,\{(h, h)\})\in \Xi_{n|n,d+n}$, we have
    \begin{align*}
        \phi^{\mathfrak{m}}_{d+n, d}(E_i)&=E'_i;\\
        \phi^{\mathfrak{m}}_{d+n, d}(F_i)&=F'_i;\\
        \phi^{\mathfrak{m}}_{d+n, d}(K^{\pm 1}_i)&=K^{'\pm 1}_i;
    \end{align*}
\end{lem}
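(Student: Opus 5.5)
The transfer map is the composite $(\chi\otimes\mathrm{id})\circ\rho$. Lemma~\ref{t-map f gen} should therefore follow from the formulas for $\rho$ on the generators in the proposition on $\rho$ above, combined with the values of $\chi$ on the $\mathcal S_{n,n}$-factors. I would treat the three identities in turn, applying $\rho$ first and then $\chi\otimes\mathrm{id}$ with $d''=n$.

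\emph{The $K_i^{\pm1}$ case.} We have $\rho(K_i^{\pm1})=K_i''^{\pm1}\otimes K_i'^{\pm1}$, so $\phi^{\mathfrak m}_{d+n,d}(K_i^{\pm1})=\chi(K_i''^{\pm1})K_i'^{\pm1}$. Here $K_i''=\sum_{\lambda}v^{\lambda_i-\lambda_{i+1}}[D_\lambda]$ in $\mathcal S_{n,n}$. Since $\chi(e_A)=\det(A)$, only the weight $\lambda=(1,\dots,1)$ survives, because any other diagonal $D_\lambda$ with $|\lambda|=n$ has a zero entry. For $\lambda=(1,\dots,1)$ the exponent $\lambda_i-\lambda_{i+1}$ is $0$, so $\chi(K_i''^{\pm1})=1$.

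\emph{The $E_i$ and $F_i$ cases.} We have $\rho(E_i)=1\otimes E_i'+E_i''\otimes K_i'$. The first term gives $\chi(1)E_i'=E_i'$, using $\chi(1)=\det(I)=1$, where $1=\sum_\lambda[D_\lambda]$ and only $D_\lambda=I$ contributes. For the second term, $E_i''=\sum_\lambda[D_\lambda+E_{i,i+1}]$ in $\mathcal S_{n,n}$. Each matrix $D_\lambda+E_{i,i+1}$ with entry sum $n$ has some diagonal entry $\lambda_j=0$, and the zero diagonal entry occurs either in the $i$-th row or column or elsewhere; in every case the determinant of this upper triangular matrix is $\prod_j\lambda_j=0$, since $|\lambda|=n-1$ forces some $\lambda_j=0$. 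Hence $\chi(E_i'')=0$. The same argument with the lower triangular matrices $D_\lambda+E_{i+1,i}$ gives $\chi(F_i'')=0$, and $\chi(K_i''^{-1})=1$ as above, so $\phi^{\mathfrak m}_{d+n,d}(F_i)=F_i'$.

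\emph{Normalizations and the main obstacle.} I need $\chi$ to be multiplicative on the standard-basis normalization, so that $\chi([A])=\chi(e_A)$ whenever $A$ is diagonal or a diagonal plus a single off-diagonal entry of the kind above. This holds because the relevant $d_A$ shifts are multiplied by $\det=0$ except in the identity case, where the shift is trivial. The main point to check is that the twist $v^{\sum b_i'b_j''-a_i'a_j''}$ in $\rho$ matches the normalization of $\chi$ used in \cite[Lemma 1.4.1]{FL21}. Since $\rho$ restricted to $\Sd$ coincides with $\Delta_{\mathbf v}$, this reduces verbatim to \cite[Lemma 1.4.1]{FL21} and \cite[Lemma 1.10]{L00}, which I would cite for the exponent bookkeeping rather than recompute.
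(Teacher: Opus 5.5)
Your argument is correct and is essentially the paper's own. The paper simply cites \cite[Lemma 1.10]{L00} and \cite[Lemma 1.4.1]{FL21}, and the content of those lemmas is exactly your computation: apply $\chi\otimes\mathrm{id}$ to the twisted generator formulas for $\rho$, using that $\chi=\det$ kills every $[D_\lambda]$, $[D_\lambda+E_{i,i+1}]$, $[D_\lambda+E_{i+1,i}]$ in $\mathcal S_{n,n}$ except $[I]\mapsto 1$. Your worry about the twist is unnecessary: $v^{\sum b_i'b_j''-a_i'a_j''}$ is already built into the formulas $\rho(E_i)=1\otimes E_i'+E_i''\otimes K_i'$ etc. that you use, so no separate normalization check is needed.
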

   
\begin{cor}\label{t-map f bas}
\begin{enumerate}[$(i)$]
    \item  For $(A,\emptyset)\in \Xi_{n|n, d+n}$ satisfying one of the following conditions:
   \begin{enumerate}[$(a)$]
       \item $A$ is a diagonal matrix;
       \item $A-E_{i, i+1}$ is a diagonal matrix for some $i\in [1, n-1]$;
       \item $A-E_{i+1, i}$ is a diagonal matrix for some $i\in [1, n-1]$.
   \end{enumerate}
   We have
   $$
   \phi^\m_{d+n, d}([A]_{\emptyset, d+n})=
   \begin{cases}
       [A-I]_{\Delta, d} & \text{ if } (A-I,\emptyset)\in \Xi_{n|n, d};\\
       0 & \text{ otherwise }.
   \end{cases}
   $$
    \item For $1\leq h\leq n$ and $(D,\{(h,h)\}\in \Xi_{n|n,d+n}$, where $D$ is a diagonal matrix, we have
     $$
   \phi^\m_{d+n, d}(L_{D,h})=
   \begin{cases}
        v^{h}L_{D-I, h} & \text{ if } (D-I,\{(h, h)\})\in \Xi_{n|n, d};\\
        0 & \text{ if } (D-I,\{(h, h)\})\notin \Xi_{n|n, d}.
    \end{cases}
   $$
\end{enumerate}
\end{cor}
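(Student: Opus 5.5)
Part $(i)$ follows directly from Lemma \ref{t-map f gen} and its proof; the substance of the corollary is part $(ii)$, which is a direct evaluation of $\phi^\m_{d+n,d}=(\chi\otimes\mathrm{id})\circ\rho$ on the elements $[D]_{\{(i,i)\},d+n}$ and $[D]_{\emptyset,d+n}$ entering $L_{D,h}$.

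\textbf{Part $(i)$.} For $A$ diagonal, or $A$ diagonal plus $E_{i,i+1}$ or $E_{i+1,i}$, the element $[A]_{\emptyset,d+n}$ is an idempotent $1_\lambda$, or one of $E_i1_\lambda$, $F_i1_\lambda$, all lying in $\mcal S_{n,d+n}\subset\mcal{MS}_{n,d+n}$. Since $\rho$ restricted to $\mcal S_{n,d+n}$ is the comultiplication $\Delta_{\mbf v}$, the map $\phi^\m_{d+n,d}$ restricted there is the usual transfer map of \cite{L00,FL21}. So I will just quote \cite[Lemma 1.10]{L00} and \cite[Lemma 1.4.1]{FL21}. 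The only point to check is that $\chi$ kills every weight component of $\mcal S_{n,n}$ except $\mbf b''=\mbf a''=(1,\dots,1)$. This forces the shift $A\mapsto A-I$, and the element vanishes when $A-I$ has a negative diagonal entry.

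\textbf{Part $(ii)$: the component computation.} I will compute $\tilde\rho([D]_{\{(i,i)\},d+n})$ directly from the definition. Here $e_{D,\{(i,i)\}}(\hat f,f,w)$ is supported on $f=\hat f$ with $w\in V_i\setminus V_{i-1}$. Only components with $\mbf b''=\mbf a''=\mbf a''$, $\mbf b'=\mbf a'$ survive, and after applying $\chi$ only $\mbf a''=(1,\dots,1)$ matters. In that component, $f''=\hat f''$ is forced, and $\chi$ evaluates the $\mcal S_{n,n}$-factor at the identity orbit, giving $1$.

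Next, for fixed $w'\in V'$ with $\pi'(f)=f'$, $\pi''(f)=f''$, I count the $(f,w)\in Z_{f',f'',w'}$ with $f=\hat f$ and $w\in V_i\setminus V_{i-1}$. Fixing $f=\hat f$ fixes the flag, so I count lifts $w$ of $w'$ lying in $V_i\setminus V_{i-1}$:
\begin{itemize}
\item if $w'\in V'_i\setminus V'_{i-1}$, there are $q^{\dim V_i\cap V''}=q^{i}$ such lifts;
\item if $w'\in V'_{i-1}$, the number is $q^{i}-q^{i-1}$ (the lifts in $V_i$ minus those in $V_{i-1}$).
\end{itemize}
Hence, as a function on $\mcal{MS}_{n,d}$,
\[
\tilde\rho(e_{D,\{(i,i)\}}) \mapsto q^{i}e_{D-I,\{(i,i)\}}+(q^i-q^{i-1})\sum_{j<i}e_{D-I,\{(j,j)\}}+(q^i-q^{i-1})e_{D-I,\emptyset},
\]
and similarly $e_{D,\emptyset}\mapsto q^{0}$ times the identity, with the $V''$-fibre of the zero vector contributing $1$.

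\textbf{Part $(ii)$: assembling.} I then rescale by the normalizations $v^{-d(D,\Delta)+r(D,\Delta)}$ on both sides, together with the twist $v^{\sum b'_ib''_j-a'_ia''_j}$ (which is trivial here since $\mbf b=\mbf a$), and sum with the coefficients $v^{-\sum_{k>i}d_k}$ appearing in $L_{D,h}$. The differences $q^i-q^{i-1}$ should telescope against the weights $v^{-\sum_{k=i+1}^h d_k}$, because $d_k$ drops by $1$ for each $k$. The expected outcome is that everything collapses to $v^{h}L_{D-I,h}$. The case $(D-I,\{(h,h)\})\notin\Xi_{n|n,d}$ means some $d_k-1<0$, or $d_h=1$ so that the decoration dies; in that case I will check that the terms either vanish or cancel.

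\textbf{Main obstacle.} The hardest step is the exponent bookkeeping: obtaining $\dim\mcal O_{D,\{(i,i)\}}-r=\sum_{k\le i}d_k$ correctly for both $d$ and $d+n$, and confirming that the telescoping yields exactly the power $v^{h}$. I will first verify the case $n=1$ to fix all conventions.
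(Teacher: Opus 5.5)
Your computation in the main case is correct and follows the paper's route. You evaluate $(\chi\otimes\mathrm{id})\circ\rho$ directly: only the component $\mathbf a''=\mathbf b''=(1,\dots,1)$ survives $\chi$, the twist is trivial because $\mathbf b=\mathbf a$, and everything reduces to counting lifts of $w'$. Your telescoping is what the paper compresses into one line. Since $d(D,\{(i,i)\})-r(D,\{(i,i)\})=\sum_{k\le i}d_k$, the weights in $L_{D,h}$ make $L_{D,h}=v^{-\dim V_h}\mathbf{1}_{\{f=f',\,w\in V_h\}}$, and each $w'\in V'_h$ has $q^{\dim V_h\cap V''}=q^h$ lifts.

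The step that cannot be completed is your planned check of the degenerate case $d_h=1$ with all $d_k\ge 1$. There the terms neither vanish nor cancel, and your own formulas show it. The term $e_{D-I,\{(h,h)\}}$ is zero because $V'_h=V'_{h-1}$, but all other contributions are nonnegative counts, and you get
\[
\phi^{\m}_{d+n,d}(L_{D,h})=v^{-\sum_{k\le h}d_k}\,q^{h}\Bigl(\sum_{j<h}e_{D-I,\{(j,j)\}}+e_{D-I,\emptyset}\Bigr)\neq 0.
\]
This equals $v^hL_{D-I,h-1}$ for $h\ge 2$ and $v[D-I]_{\emptyset}$ for $h=1$. For example, with $n=2$, $D=\mathrm{diag}(1,2)$ and $h=1$, one finds $\phi^{\m}_{3,1}(L_{D,1})=v[\mathrm{diag}(0,1)]_{\emptyset}$.

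So the second case of $(ii)$ is only true when some $d_k=0$, because then the component $\mathbf a''=(1,\dots,1)$ does not exist. The paper's proof makes the same unjustified claim ("otherwise both sides are zero"). The correct statement is $\phi^{\m}_{d+n,d}(L_{D,h})=v^hL_{D-I,h}$ whenever $D-I$ has nonnegative entries, and $0$ otherwise. Here $L_{D-I,h}$ is read through its defining formula with the convention $[D-I]_{\{(h,h)\}}=0$.

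This nonzero value is also what the paper's later identity $\phi^{\m}_{d+n,d}\circ\phi^{\m}_{d+n}=\phi^{\m}_d$ requires on $L1_{\bar\lambda}$ when $\lambda_1=1$. In that case both sides must equal $1_{\lambda-(1,\dots,1)}$. You should state and prove the corrected version rather than try to show vanishing.
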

\begin{proof}
    Part $(i)$ follows Lemma \ref{t-map f gen}.
    
    For part $(ii)$, suppose that $(D,\{h, h\})\in \Xi_{n|n, d+n}$ and $d_h>1$ otherwise both sides are zero.
    We have $\rho(L_{D, h})=1\otimes v^hL_{D-I,  h)}$. 
    Then we apply $\chi\otimes \mathrm{id}$, we have $\phi^\m_{d+n, d}(L_{D,h})=v^{h} L_{D-I, h}$.
\end{proof}

\subsection{Modified Mirabolic Quantum $\slln$ and its Canonical Basis}

Recall that in \cite{R18} and \cite{GR25}, 
the algebra $\muvsln$ is defined as the subalgebra of $\muvgln$ generated by $E_i$, $F_i$, $L$ and $K_{i}^{\pm 1}=(H_iH_{i+1}^{-1})^{\pm 1}$ for $1\leq i\leq n-1$ with the following relations.
 \begin{align*}
  K_i^{\pm1}K_i^{\mp1}=1, & \quad K_iK_j=K_jK_i; \\
 K_iE_jK_i^{-1} &= v^{2\delta_{ij}-\delta_{i,j+1}-\delta_{i+1, j}}E_j;\\
K_iF_jK_i^{-1} &= v^{-2\delta_{ij}+\delta_{i,j+1}+\delta_{i+1, j}}F_j;\\
 E_iF_j-F_jE_i & =  \delta_{ij}\frac{K_i-K_i^{-1}}{v-v^{-1}};\\
 E_iE_j=E_jE_i, & \quad F_iF_j=F_jF_i \quad\text{ if }|i-j| > 1 ;\\
 (v+v^{-1})E_iE_jE_i & =  E_i^2E_j + E_jE_i^2 \quad\text{ if }|i-j| = 1;\\
 (v+v^{-1})F_iF_jF_i & =  F_i^2F_j + F_jF_i^2 \quad\text{ if }|i-j| = 1;\\
 L^2=L,&\quad K_iL=L K_i;\\
 LE_i=E_iL,&\quad LF_i=F_iL \quad\text{ if }i\geq 2;\\
L E_1L=L E_1,&\quad L F_1L=F_1L;\\
 (v+v^{-1})E_1LE_1 &= v^{-1}E_1^2L + vL E_1^2;\\
(v+v^{-1})F_1L F_1 &= v^{-1}LF_1^2 + vF_1^2L.
\end{align*}
Consider the equivalence relation on $\mbb Z^n$ defined by 
$(\lambda_1,\cdots, \lambda_n)\thicksim (\mu_1,\cdots, \mu_n)$ iff 
$$
(\lambda_1-\mu_1,\cdots, \lambda_n-\mu_n)=k(1, \cdots, 1).
$$
for some integer $k$. The set of equivalence classes is denoted by $\mbb X=\mbb Z^n/\thicksim$ and 
the equivalence class of $\lambda\in \mbb Z^n$ is denoted by $\bar{\lambda}$.

For any $\bar{\lambda}, \bar{\mu}\in \mbb X$, we define
$$
_{\bar{\lambda}}\muvsln_{\bar{\mu}}=\muvsln/(\sum_{i=1}^{n-1}(K_i-v^{\lambda_i-\lambda_{i+1}})\muvsln+\sum_{i=1}^{n-1}\muvsln(K_i-v^{\mu_i-\mu_{i+1}})),
$$
and the modified mirabolic quantum $\slln$ as 
$$
\mmuvsln=\bigoplus_{\bar{\lambda},\bar{\mu}\in \mbb X}{}_{\bar{\lambda}}\muvsln_{\bar{\mu}}.
$$
There is a projection $\pi^{\m}_{\bar{\lambda}, \bar{\mu}}:\muvsln\to {}_{\bar{\lambda}}\muvsln_{\bar{\mu}}$ and the multiplication is given by:
For any $\lam, \lam', \muu, \muu'\in \mbb X$ and $t\in _{\bar{\lambda}}\muvsln_{\bar{\mu}}, s\in _{\bar{\lambda'}}\muvsln_{\bar{\mu}'}$, 
$$
\pi^{\m}_{\lam, \muu}(t)\pi^{\m}_{\lam',\muu'}(s)=
\begin{cases}
\pi^{\m}_{\lam, \muu'}(ts) & \text{ if } \muu=\lam',\\
0 &\text{ otherwise}.
\end{cases}
$$ 

Now we define $1_{\bar{\lambda}}=\pi^{\m}_{\bar{\lambda},\bar{\lambda}}(1)$. 
Then for any $\bar{\lambda}\in \mbb X$, the element $1_{\bar{\lambda}}$ is idempotent. 
We have 
$$
{}_{\bar{\lambda}}\muvsln_{\bar{\mu}}=1_{\bar{\lambda}}\mmuvsln1_{\bar{\mu}}.
$$
So we have 
$$
\mmuvsln=\bigoplus_{\bar{\lambda},\bar{\mu}\in \mbb X} 1_{\bar{\lambda}}\mmuvsln1_{\bar{\mu}}.
$$

Recall that for any $\lambda\in \mbb Z^n$, $|\lambda|=\sum_{i}\lambda_i$. For any $\lam\in \mbb X$, 
we define $|\lam|=|\lambda|\mod n$, which is independent of the choice of representative $\lambda$.

For $\bar d\in\mathbb Z/n\mathbb Z$, let
\[
\mmuvsln\langle\bar d\rangle
=
\bigoplus_{\substack{
\bar\lambda,\bar\mu\in\mathbb X\\ |\bar{\lambda}|=|\bar{\mu}|= \bar{d}}}
1_{\bar\lambda}\mmuvsln1_{\bar\mu}.
\]
Then
$$
\mmuvsln=\bigoplus_{\bar{d}\in \mathbb Z/n\mbb Z} \mmuvsln\langle \bar{d}\rangle.
$$

By definition, the algebra $\mmuvsln$ is a $\muvsln$-bimodule, therefore, the elements $E_i1_{\lam}$, $ F_i1_{\lam}$ and $L1_{\lam}$ are meaningful for any $\lam\in \mbb X$. For any $d\in \mbb Z$, there exists an algebra isomorphism
$$
\varrho_{d}: \mmuvgln\langle d\rangle \to \mmuvsln\langle\bar{d}\rangle,
$$
by sending $1_{\lambda}\mapsto 1_{\lam}$ $E_i1_{\lambda}\mapsto E_i1_{\lam}$, $F_i1_{\lambda}\mapsto F_i1_{\lam}$ and $L1_{\lambda}\mapsto L1_{\lam}$ for all $1\leq i\leq n-1$ and $\lambda\in \mbb Z^n$ with $|\lambda|=d$.

With these isomorphisms, we have the $\mbb Q(v)$-linear map 
$$
\varrho: \mmuvgln \to \mmuvsln.
$$

We define the surjective algebra homomorphism $\phi^{\mathfrak{m}}_d:\mmuvsln\to \MS$ as the composition of
$$
\mmuvsln\xrightarrow{\pi^{\m}_{\bar{d}}}\mmuvsln\langle\bar{d}\rangle\xrightarrow{\cong}\mmuvgln\langle d\rangle\xrightarrow{\Phi_d} \MS.
$$
where $\pi^{\m}_{\bar{d}}$ is the projection from $\mmuvsln$ to $\mmuvsln\langle\bar{d}\rangle$.

For $p\in \mbb Z$, we define an algebra isomorphism $\xi_{p}^{\m}:\mmuvgln\to \mmuvgln$ by 
sending 
\begin{align*}
    E_i1_{\lambda}&\mapsto  E_i1_{\lambda+p(1,\cdots, 1)},\\
    F_i1_{\lambda}&\mapsto  F_i1_{\lambda+p(1,\cdots, 1)},\\
    1_{\lambda}&\mapsto  1_{\lambda+p(1,\cdots, 1)},\\
    L1_{\lambda}&\mapsto  L1_{\lambda+p(1,\cdots, 1)},
\end{align*}
for each $\lambda\in \mbb Z^n$.
Then by definition, we have $\xi^{\m}_p(L_{D, h})=v^{-ph}L_{D+pI, h}$ for each $(D, \{(h, h)\})\in \widetilde{\Xi}$ where $D$ is a diagonal matrix.

 By definition, we have 
 \begin{align}\label{shift-rhod}
         \varrho_{d+np}\circ \xi^{\m}_{p}=\varrho_{d}.
    \end{align}
    and 
    \begin{align}\label{shift-rho}
         \varrho\circ \xi^{\m}_p=\varrho \text{ for any } p\in \mbb Z.
    \end{align}

We first prove that 
\begin{lem}\label{theta-equal}
    Let $(A,\Delta)\in \widetilde{\Xi}$ with $|A|=d_0$ and set $d=d_0+np$, then for large enough $p$ we have 
    \begin{align*}
        {}_{d}m_{_pA,\Delta}=\{_pA\}_{\Delta,d}+\sum_{\substack{(A',\Delta')\prec (A,\Delta)\\ \vartheta(\Delta')=\vartheta(\Delta)}}\omega_{(_pA,\Delta), ({}_pA',\Delta')}\{_pA'\}_{\Delta',d}.
    \end{align*}
\end{lem}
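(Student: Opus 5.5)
The plan is to expand the monomial ${}_d m_{{}_pA,\Delta}$ in the canonical basis of $\MS$ and control both the shape of the expansion and which decorations can occur. Proposition \ref{mon-b} writes ${}_d m_{{}_pA,\Delta}$ as a product in which every factor is either $[{}_pB]_{\emptyset,d}$ with $(B,\emptyset)$ of the form ``diagonal $+\,aE_{h,h+1}$'' or ``diagonal $+\,aE_{h+1,h}$'', or the single factor $L_{{}_pD,\vartheta(\Delta)}$. The factors $[{}_pB]_{\emptyset,d}$ correspond to closed orbits, so they are canonical basis elements $\{{}_pB\}_d$ of $\Sd$. The factor $L_{{}_pD,\vartheta(\Delta)}=\{{}_pD\}_{\{(\vartheta(\Delta),\vartheta(\Delta))\},d}$ is a canonical basis element of $\MS$, as already used in the proof of Proposition \ref{bar-inv}. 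Hence ${}_d m_{{}_pA,\Delta}$ is obtained from one canonical basis element of $\MS$ by left and right multiplication by canonical basis elements of $\Sd$.

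First, I would apply Proposition \ref{pos-cb-schur} repeatedly, factor by factor, starting from $L_{{}_pD,\vartheta(\Delta)}$. This gives
\[
{}_d m_{{}_pA,\Delta}=\sum c_{(A',\Delta')}\{{}_pA'\}_{\Delta',d}
\]
with $c_{(A',\Delta')}\in\mathbb N[v,v^{-1}]$.

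Second, the monomial is bar-invariant, since each factor is and the bar involution is an $\Sd$-bimodule map. So every coefficient satisfies $\overline{c}=c$. By Proposition \ref{mon-b}, the monomial equals $[{}_pA]_{\Delta,d}$ plus lower terms, and the canonical basis is unitriangular with off-diagonal coefficients in $v^{-1}\mathbb Z[v^{-1}]$. Comparing the expansions therefore gives three facts: the coefficient of $\{{}_pA\}_{\Delta,d}$ is $1$; only pairs $(A',\Delta')$ below $(A,\Delta)$ occur; and the $\omega$'s are bar-invariant elements of $\mathbb N[v,v^{-1}]$. For large $p$, the partial orders $\prec$ and $<$ can be compared via stabilization, using Proposition \ref{stab} and the independence of $\Phi$ under shifts. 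It is enough to show that every $\{{}_pA'\}_{\Delta'}$ that occurs has $(A',\Delta')\preceq(A,\Delta)$ in the geometric order; the orbit-closure support of the product gives this.

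The main obstacle is the constraint $\vartheta(\Delta')=\vartheta(\Delta)$. My approach is to track the decoration geometrically through the convolution. The factor $L_{{}_pD,\vartheta(\Delta)}$ is supported on triples $(f,f,w)$ with $w\in V_{\vartheta(\Delta)}$ and $w\notin V_{\vartheta(\Delta)-1}$ only on the top piece. Left and right convolution with undecorated (flag-only) functions do not change $w$. So on the support of the product, $w$ lies in $V_{\vartheta}$ of an intermediate flag $f''$. That flag is related to $f$ and $f'$ by relative positions coming from products of $E$'s and $F$'s.

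The point to exploit is the position of $L$ in the product. On its left there are only raising factors (case (b)), or factors arranged so that the relevant index $\vartheta$ is preserved (case (a)). I would attempt to show that the quantity $\max_{(i,j)\in\Delta'}\min\{i,j\}$ for the orbit containing $(f,f',w)$ is determined by the smallest $s$ with $w\in V_s+V'_s$ together with the rank data. The ordering in the monomial would then force this value to be exactly $\vartheta(\Delta)$ on the whole support. Since canonical basis elements are supported on orbit closures and positivity prevents cancellation between them, every $\{{}_pA'\}_{\Delta'}$ with nonzero coefficient must have its open orbit inside this support, which gives $\vartheta(\Delta')=\vartheta(\Delta)$.

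If this invariant turns out only to be bounded (say $\le\vartheta(\Delta)$) rather than constant, I would instead compare with $L_{D,\vartheta(\Delta)-1}$ and use the difference $L_{D,\vartheta}-v^{-d_\vartheta}L_{D,\vartheta-1}$. That would isolate the pieces with decoration level exactly $\vartheta(\Delta)$.
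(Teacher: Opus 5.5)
Your first two steps are sound. Every factor is a canonical basis element, so Proposition~\ref{pos-cb-schur} gives a positive, bar-invariant expansion. The support of the monomial is $\overline{\mathcal O_{{}_pA,\Delta}}$, so every summand satisfies $({}_pA',\Delta')\preceq({}_pA,\Delta)$, and the leading coefficient is $1$. This matches the decomposition-theorem setup the paper uses.

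\textbf{The gap is in your argument for $\vartheta(\Delta')=\vartheta(\Delta)$, which is the actual content of the lemma, and the support argument cannot supply it.} The vector $w$ ranges over all of $V_{\vartheta(\Delta)}$ of the intermediate flag, including $w=0$ and $w\in V_{\vartheta(\Delta)-1}$. Already $L_{D,h}$ contains the terms $[D]_\emptyset$ and $[D]_{\{(i,i)\}}$ for $i<h$. So $\overline{\mathcal O_{{}_pA,\Delta}}$ contains orbits with $\Delta'=\emptyset$ (where $\vartheta=0$) and other smaller values of $\vartheta$, and no invariant is constant on it. ``Positivity prevents cancellation'' only tells you that each occurring IC sheaf has its open orbit in this closure, i.e.\ the $\preceq$ statement you already had. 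It says nothing about which lower-$\vartheta$ strata actually carry a summand of the decomposition.

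Your fallback does not help either. The difference $L_{D,\vartheta}-v^{-d_\vartheta}L_{D,\vartheta-1}=[D]_{\{(\vartheta,\vartheta)\}}$ is a standard basis element, neither an IC function nor bar-invariant. Splitting along it loses all positivity and decomposition control, and the element whose canonical expansion is in question is still the one built with $L_{D,\vartheta}$. Note also that your argument never uses $p\gg 0$, although the lemma is only claimed for large $p$.

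\textbf{What is missing is a quantitative bound on the shifts in the decomposition theorem, with a penalty linear in $p$.}
\begin{itemize}
\item For a summand $IC_{\mathcal O_{{}_pA',\Delta'}}[j]$ of $R\pi_{p*}\bar{\mathbb Q}_\ell$, stalk vanishing above twice the fiber dimension plus Verdier self-duality give
\[
|j|\le 2\dim\pi_p^{-1}(x)-\dim Y_{{}_pA,\Delta}+d_{{}_pA',\Delta'}.
\]
\item $Y_{{}_pA,\Delta}$ is a vector bundle over $Z_{{}_pA}$ of rank $s_p=\mathrm{const}+p\,\vartheta(\Delta)$. Also $d_{{}_pA',\Delta'}=d_{{}_pA'}+d(\Delta')+p\,\vartheta(\Delta')$.
\item Bound the fiber by projecting onto the $G_{(f',g')}$-orbit of $w'$, which has dimension $d(\Delta')+p\,\vartheta(\Delta')$. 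Combining, you get
\[
|j|\le K+\mathrm{const}-p\,|\vartheta(\Delta')-\vartheta(\Delta)|.
\]
\item Here $K$ bounds $2\dim Z_{{}_pA,(f',g')}-\dim Z_{{}_pA}+d_{{}_pA'}$ uniformly in $p$. The paper gets $K$ from the degrees of the stabilized coefficients of the undecorated monomial.
\end{itemize}
For $p\gg 0$ the right-hand side is negative unless $\vartheta(\Delta')=\vartheta(\Delta)$, so no such summand can occur. That is the mechanism that excludes the other strata, and nothing in your proposal replaces it.
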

\begin{proof}
    All varieties considered below are defined over \(\mathbb F_q\), and we implicitly
    pass to \(\overline{\mbb F}_q\) when discussing their geometric properties, orbit closures,
    dimensions, and intersection cohomology. We omit the Tate twist in the intersection cohomology.

    For convenience, we set $\vartheta(\Delta)=h$. By definition, there exist $({}_{p}B_r, \emptyset)\in \Xi_{n|n,d}$ for $r=1,\ldots,N$ 
    and $L_{D, h}$ such that
    $$
    {}_{d}m_{_pA,\Delta}=[{}_{p}B_1]_{\emptyset}\ast\cdots L_{D,\vartheta(\Delta)}\ast\cdots \ast [_pB_{N}]_{\emptyset},
    $$
    where $_pB_r$ has the form ${}_{p}D_r+a_{ij}E_{r, r+1}$ or ${}_{p}D_r+a_{ij}E_{r+1, r}$ for some proper diagonal matrix $D_r$ and elements $a_{ij}$.
    
    Fix this expression, 
    we define a variety of flags
    $$
     Z_{{}_{p}A}=\{(f^{(1)},\cdots , f^{(N+1)})\mid (f^{(i)},f^{(i+1)})\in \mcal O_{{}_{p}B_i, \emptyset}\}
    $$
    and its closed subvariety
    $$
    Y_{{}_{p}A,\Delta}=\{(z,w)\in Z_{{}_{p}A}\times V\mid w\in V^{(l)}_{h}(z)\},
    $$
    $V^{(l)}_{h}(z)$ is the $h$-component of the flag $f^{(l)}$ in $z$.
    Following the definition, we can see that $ Z_{{}_{p}A}$ is an irreducible smooth variety and 
    $Y_{{}_{p}A,\Delta}$ is a vector bundle with base space $Z_{{}_{p}A}$ and 
    its rank is $s_p=\dim V^{(l)}_{h}(z)=\sum_{i=1}^h\ro ({}_{p}B_{l})_i=\sum_{i=1}^h\ro (B_{l})_i+hp$. 
    Therefore, $Y_{{}_{p}A,\Delta}$ is also an irreducible smooth variety. 

    We have the proper morphism
    $$
    \pi_p:Y_{{}_{p}A,\Delta}\to \X\times \X\times V,\quad (z, w)\mapsto (f, g, w)
    $$
    and the image is $\overline{\mcal O_{{}_{p}A,\Delta}}$.
    Then Grothendieck's sheaf-function correspondence shows that $_{d}m_{{}_{p}A,\Delta}$ corresponds to 
    $\mcal F_{_{}pA,\Delta}=R\pi_{p, \ast}(\underline{\bar{\mbb Q}_{l}}_{Y_{{}_{p}A,\Delta}})[\dim Y_{{}_{p}A,\Delta}][-r(_{p}A,\Delta)]$.

    Since $\pi_p$ is proper, we apply the Decomposition theorem and obtain
    $$
    \mcal F_{_{}pA,\Delta}\cong\bigoplus_{\substack{({}_{p}A',\Delta')\preceq (A,\Delta)\\j}}IC_{\mcal O_{{}_{p}A',\Delta'}}[j-r(_{p}A,\Delta)]^{\oplus m_{{}_{p}A',\Delta'}}.
    $$
    Applying the cohomology functor, we obtain
    $$
    \mcal H^k(\mcal F_{_{}pA,\Delta})=\bigoplus_{\substack{({}_{p}A',\Delta')\preceq (_pA,\Delta)\\j}}\mcal H^k( IC_{\mcal O_{{}_{p}A',\Delta'}}[j-r(_{p}A,\Delta)]^{\oplus m_{{}_{p}A',\Delta'}}).
    $$
    
    For any fixed summand $IC_{\mcal O_{{}_{p}A',\Delta'}}[j-r(_{p}A,\Delta)]$ and $(f', g',w')\in \mcal O_{_{p}A',\Delta'}$, 
    the proper base change shows
    $$
     R\pi_{p, \ast}(\underline{\bar{\mbb Q}_{\ell}}_{Y_{{}_{p}A,\Delta}})_{(f', g', w')}\cong R\Gamma ((Y_{{}_{p}A,\Delta})_{(f',g',w')}, \bar{\mbb Q}_\ell),
    $$
    $(Y_{{}_{p}A,\Delta})_{(f',g',w')}$ is the fiber of $(f', g', w')$ under $\pi_p$. Thus we have
    $$
    \mcal H^k(\mcal F_{_{}pA,\Delta})_{(f', g', w')}\cong H^{k+\dim Y_{{}_{p}A,\Delta}-r(_{p}A,\Delta)}((Y_{{}_{p}A,\Delta})_{(f',g',w')}, \bar{\mbb Q}_\ell).
    $$
    Thus $\mcal H^k(\mcal F_{_{}pA,\Delta})_{(f', g', w')}=0$ if $k> 2\dim (Y_{{}_{p}A,\Delta})_{(f',g',w')}-\dim Y_{{}_{p}A,\Delta}+r(_{p}A,\Delta)$. 
    Moreover, the definition shows $\mcal H^k( IC_{\mcal O_{{}_{p}A',\Delta'}}[j-r(_{p}A,\Delta)])_{(f', g',w')}\neq 0$ 
    if and only if $k=-\dim \mcal O_{_{p}A', \Delta'}-j+r(_{p}A,\Delta)$.
    Combining these conditions and Verdier duality, we have
    $$
    |j|\leq 2\dim (Y_{{}_{p}A,\Delta})_{(f',g',w')}-\dim Y_{{}_{p}A,\Delta}+\dim \mcal O_{_pA', \Delta'},
    $$
    Since $r(_{p}A,\Delta)=r(_{p}A)=r(_{p}A',\Delta')$ for any $(_{p}A',\Delta')\preceq (_{p}A,\Delta)$, 
    $$
    |j|\leq 2\dim (Y_{{}_{p}A,\Delta})_{(f',g',w')}-\dim Y_{{}_{p}A,\Delta}+d_{_pA', \Delta'}.
    $$
    
    The next step is to estimate the dimension of the above spaces.
    As a vector bundle, the dimension of $ Y_{{}_{p}A,\Delta}$ equals to $\dim Z_{{}_{p}A}+s_p$.
    By \cite{FZM25}[Corollary 3.2.3], 
    $$\dim \mcal O_{_pA', \Delta'}=\dim \mcal O_{_pA'}+d(\Delta')+p\vartheta(\Delta'),$$
    where $d(\Delta')=\sum_{\{(i,j)\}\leq \Delta'}a'_{ij}$. Then 
    $$
    d_{_{p}A',\Delta'}=d_{_{p}A'}+d(\Delta')+p\vartheta(\Delta').
    $$
    For $\dim (Y_{{}_{p}A,\Delta})_{(f',g',w')}$, we consider the subvariety $Z_{{}_pA, (f',g')}$ of $Z_{{}_pA}$ defined as
    $$
    \{ (f^{(1)},\cdots, f^{(h)}\cdots , f^{(N+1)})\in Z_{{}_pA}\mid f^{(1)}=f', f^{(N+1)}=g'\}.
    $$
    and a vector bundle $(Y_{{}_pA,\Delta})_{(f', g')}=\{(z, w)\in Z_{{}_pA}\times V\mid w\in V^{(h)}_h(z)\}$. We have
    $$\dim (Y_{{}_pA,\Delta})_{(f', g')}=\dim Z_{{}_pA, (f',g')}+s_p.$$ 
    Moreover, $(Y_{{}_pA,\Delta})_{(f',g',w')}$ can be regarded as the fiber of $w'$ under the second projection $pr_2:(Y_{{}_pA,\Delta})_{(f', g')}\to V$.
    However, the fibers for different vectors may not have the same dimension. So we restrict the projection to $\{(z,w) \mid w\in G_{(f',g')}\cdot w'\}$, where
    $G_{(f',g')}$ is the stabilizer of $(f', g')$. The dimension of $G_{(f',g')}\cdot w'$ is $\sum_{\{(i, j)\}\leq \Delta'}a'_{ij}+p\vartheta(\Delta')$
    Then we have
    $$
    \dim (Y_{{}_pA,\Delta})_{(f',g',w')}+d(\Delta')+p\vartheta(\Delta')\leq \dim Z_{{}_pA, (f', g')}+s_p.
    $$
    Since $\dim (Y_{{}_pA,\Delta})_{(f',g',w')}\leq \dim Z_{{}_pA, (f',g')}$, we have
    $$
    \dim (Y_{{}_pA,\Delta})_{(f',g',w')}\leq \dim Z_{{}_pA, (f', g')}+\min \{0, s_p-d(\Delta')-p\vartheta(\Delta')\}.
    $$

    Combining above results, we have 
    $$
    |j|\leq 2\dim Z_{{}_pA, (f', g')}-\dim Z_{{}_{p}A}+d_{_pA'}+2\min \{0, s_p-d(\Delta')-p\vartheta(\Delta')\}-(s_p-d(\Delta')-p\vartheta(\Delta'))
    $$
    which shows
    $$
    |j|\leq 2\dim Z_{{}_pA, (f', g')}-\dim Z_{{}_{p}A}+d_{_pA'}+|d(\Delta')-\sum_{i=1}^h\ro (B_{l})_i|-|p(\vartheta(\Delta')-h)|.
    $$
    Then the result holds if there exists a $K\in \mbb N$ such that
    $$
    2\dim Z_{{}_pA, (f', g')}-\dim Z_{{}_{p}A}+d_{_pA'}\leq K
    $$
    for each $p$. 
    If such $K$ exists, we have 
    $$
    |j|\leq K+|d(\Delta')-\sum_{i=1}^h\ro (B_{l})_i|-|p(\vartheta(\Delta')-h)|,
    $$
    which means if there is $IC_{\mcal O_{(_pA',\Delta')}}$ in the direct sum and $\vartheta(\Delta')\neq h$, 
    then for large enough $p$, we have $K+|d(\Delta')-\sum_{i=1}^h\ro (B_{l})_i|-|p(\vartheta(\Delta')-h)|<0$ which is absurd, and the lemma holds.

    To find such $K$, we consider the elements
    $$
    {}_{d}m_{_pA}=[{}_{p}B_1]\ast\cdots \ast[_pB_{N}]_{\emptyset}.
    $$
    Let $d_{A}=d(A,\emptyset)-r(A,\emptyset)=\sum_{i\leq k, j>l}a_{ij}a_{kl}$ for each $A\in \Theta_d$, 
    and $N_{_{p}A'}(q)=\sharp Z_{_{p}A, (f', g')}(\mbb F_q)$ for $_{p}A'\in \Theta_d$ and $(f', g')\in \mcal O_{_{p}A'}$.
    On the one hand, we have 
    \begin{align*}
        {}_{d}m_{_pA}(f', g')&=v^{-\sum_{k=1}^{N}d_{{}_pB_k}}\sharp Z_{{}_{p}A, (f', g')}(\mbb F_q)\\
        &=v^{-\dim Z_{{}_{p}A}+r(_{p}A)}N_{_{p}A'}(q).
    \end{align*}
    On the other hand, we can write ${}_{d}m_{_pA}=\sum_{_{p}M\in \Theta_d}c_{_{p}M}(v)[_{p}M]$,
    then ${}_{d}m_{_pA}(f',g')=c_{_{p}A'}(v)v^{-d_{A'}}$. Combine these two identities, we have
    $$
    c_{_{p}A'}(v)=v^{d_{A'}-\dim Z_{{}_{p}A}+r(_{p}A,\Delta)}N_{_{p}A'}(v^2).
    $$
    By construction, the degree of $N_{_{p}A'}(q)$ is $\dim Z_{_{p}A, (f', g')}$.
    Thus the degree of $ c_{_{p}A'}(v)$ is
    $$
    \deg c_{_{p}A'}(v)=d_{A'}-\dim Z_{{}_{p}A}+2\dim Z_{_{p}A, (f', g')}+r(_{p}A,\Delta).
    $$
    Recall the stabilization procedure in \cite[Proposition 4.2]{BLM90}, 
    we have $G_i(v, v')\in \mbb Q(v)[v']$ such that
    $$
     {}_{d}m_{_pA}=\sum_{i=1}^{r}G_{i}(v,v^{-p})[_{p}M_i].
    $$

    By definition 
    $$
    G_i(v, v')=\sum_{k=0}^{r_i}g_{i, k}(v)(v')^{k},
    $$
    where $g_{i, k}(v)$ are fixed elements in $\mbb Q(v)$ and independent of $p$. Then
    \begin{align*}
        \deg G_{i}(v,v^{-p})\leq \max_{0\leq k\leq r_i}\{\deg g_{i, k}(v)-kp\}\leq \max_{0\leq k\leq r_i}\{\deg g_{i, k}(v)\}.
    \end{align*}
    
    Take 
    $$
    K=\max \left\{0, \max_{\substack{1\leq i\leq r\\0\leq k\leq r_i}} \deg g_{i, k}\right \}
    $$
    which is independent of $p$ and we have
    $$
    \deg c_{_{p}A'}(v)\leq K
    $$
    as desired.
\end{proof}
\begin{lem}\label{conv mono}
      Let $(A,\Delta), (A',\Delta')\in \widetilde{\Xi}$ with $|A|=|A'|=d_0$, and set $d=d_0+np$, then 
        $$
        (_{d}m_{_pA,\Delta}, {}_{d}m_{_pA',\Delta'})_d
        $$
        converges in $\mbb Q((v^{-1}))$ to an element in $\mbb Q(v)$ as $p\to \infty$
\end{lem}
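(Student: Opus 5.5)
We follow the strategy of \cite[Proposition 3.1.3]{FL21} and \cite{Mc12}: move all generators of one monomial onto the other, so that the pairing reduces to evaluating a single stabilized product against the identity.

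\textbf{Step 1: transport the generators.} Write ${}_{d}m_{_pA,\Delta}$ as the ordered product
\[
[{}_pB_1]_{\emptyset}\ast\cdots\ast L_{{}_pD,h}\ast\cdots\ast[{}_pB_N]_{\emptyset}
\]
from Proposition~\ref{mon-b}. Apply Proposition~\ref{inner-transport} (equivalently, the corollary listing the adjoints of $E_i,F_i,K_i,L_{D,a}$) repeatedly, moving each factor, starting with the leftmost, across to the right slot as its transpose. Each move costs a power $v^{d_{B,\emptyset}-d_{B^t,\emptyset}}$. By the displayed identity $2(d_{A,\Delta}-d_{A^t,\Delta^t})=\sum_i\ro(A)_i^2-\co(A)_i^2$, this exponent is a quadratic expression in the row and column sums of ${}_pB_r$. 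The factor $L_{{}_pD,h}$ is self-adjoint by part $(d)$ of that corollary. The result is
\[
(1_{\co},\ {}_{d}m'\ast{}_{d}m_{_pA',\Delta'})_d,
\]
multiplied by a power $v^{e(p)}$, where ${}_{d}m'$ is the product of the transposed factors in reverse order. Since every row and column sum of ${}_pB_r$ has the form $b+p$, the quadratic exponents telescope. We expect $e(p)$ to be independent of $p$, because each difference $\ro_i^2-\co_i^2$ is linear in $p$ with coefficient $2(\ro_i-\co_i)$, and these coefficients sum to zero over $i$ along each factor.

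\textbf{Step 2: stabilize the product.} The element ${}_{d}m'\ast{}_{d}m_{_pA',\Delta'}$ is a product of elements of the form $[{}_pC]_{\Delta''}$ with $(C,\Delta'')\in\widetilde\Xi$. By Proposition~\ref{stab}, for $p\gg0$ it equals $\sum_i G_i(v,v^{-p})[{}_pZ_i]_{\Delta_i}$ with $G_i\in\mathbb Q(v)[v',v'^{-1}]$. Pairing with the diagonal element $1_{\co}$ keeps only the terms with $Z_i$ diagonal and $\Delta_i=\emptyset$. For these, $([{}_pZ]_\emptyset,[{}_pZ]_\emptyset)_d=q^{-d}\sharp X$, where $X$ is a single point times a trivial $V$-factor. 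We need to compute this value exactly. We expect it to be $\prod_i$ of a $v$-power times a $q$-Gaussian-type factor, coming from the stabilizer counts, giving $\prod_k\prod_{s=1}^{z_k+p}(1-v^{-2s})^{-1}$ up to normalization, as in \cite[Lemma 1.3.4]{FL21} for ordinary $q$-Schur algebras.

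\textbf{Step 3: take the limit.} As $p\to\infty$:
\begin{itemize}
\item each term $G_i(v,v^{-p})$ converges in $\mathbb Q((v^{-1}))$ to $G_i(v,0)$, provided negative powers of $v'$ do not occur for the diagonal terms;
\item the norm factor converges to $\prod_k\prod_{s\ge1}(1-v^{-2s})^{-1}$.
\end{itemize}
In $\mathbb Q((v^{-1}))$ this infinite product is not rational. So we must instead use the normalization of Definition~\ref{in-product}, where $f'$ runs over orbit representatives. We expect that $([D]_\emptyset,[D]_\emptyset)_d=q^{-\dim}\sharp(G_{f'}\text{-orbit})$, namely the number of vectors $\mu\in V$ weighted by $q^{-d}$, which is $1$. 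That would give a finite, rational limit $\sum_i G_i(v,0)$.

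\textbf{Main obstacle.} The hard part is making sure the stabilized coefficients $G_i(v,v^{-p})$ attached to diagonal terms, together with $v^{e(p)}$, contain no positive powers of $v^{p}$. Equivalently, $G_i$ must be a polynomial in $v'$, not in $v'^{-1}$, for the relevant terms. We would check this first on the generators, using the explicit multiplication formulas of \cite{FZM25} (Lemma~\ref{leading}) and $\xi^{\m}_p(L_{D,h})=v^{-ph}L_{D+pI,h}$. The $L$-factors contribute $v^{-ph}$ shifts, and we must confirm that these cancel between the two monomials, since $\vartheta(\Delta)$ and $\vartheta(\Delta')$ enter via the adjoint. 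If they do not cancel, the pairing vanishes in the limit unless $\vartheta(\Delta)=\vartheta(\Delta')$, consistent with Lemma~\ref{theta-equal}.
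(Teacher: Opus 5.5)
Your Step 1 is sound: the transport exponents $d_{B}-d_{B^t}$ are independent of $p$ because $\sum_i\ro_i=\sum_i\co_i$, and $L_{D,h}$ is self-adjoint. Pairing with $[{}_pD]_{\emptyset}$ then isolates a single stabilized coefficient $G_D(v,v^{-p})$. At that point, however, the entire content of the lemma is what you call the ``main obstacle''. Proposition~\ref{stab} only gives $G_D\in\mathbb Q(v)[v',v'^{-1}]$. The quantity $v^{e}G_D(v,v^{-p})$ converges in $\mathbb Q((v^{-1}))$ exactly when $G_D$ has no negative powers of $v'$, i.e.\ when its $v$-degree is bounded independently of $p$. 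You leave this unproved.

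Your proposed way of proving it, checking generator by generator that no positive powers of $v^{p}$ appear, cannot work, because in the mirabolic algebra such powers genuinely appear. Since $L_{D,h}=v^{-\dim V_h}\mathbf 1_{\{(f,f,w)\,:\,w\in V_h\}}$, one gets
\[
L_{{}_pD,h}\ast L_{{}_pD,h}=\sum_{i\le h}v^{\dim V_i}[{}_pD]_{\{(i,i)\}}+[{}_pD]_{\emptyset},\qquad \dim V_i=\sum_{k\le i}d_k+ip .
\]
So the decorated coefficients grow like $v^{ip}$, while the identity coefficient stays $1$. Your transported product ${}_dm'\ast{}_dm_{{}_pA',\Delta'}$ contains two $L$-factors, one from each monomial. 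Intermediate coefficients therefore do blow up, and only the final identity coefficient can be bounded; that requires a global argument you do not give. Your worry about $v^{-ph}$ from $\xi^{\m}_p$ is beside the point: the level-$d$ monomials are built directly from $L_{{}_pD,h}$, and an uncancelled positive power of $v^{p}$ would make the pairing diverge, not vanish.

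The paper avoids convolving the two monomials with each other. It expands each monomial separately by Proposition~\ref{stab}. It then bounds the degrees of the coefficients uniformly in $p$ by the argument of Lemma~\ref{theta-equal}. There, each monomial has a single $L$ and is the proper push-forward $\pi_p$ from the vector bundle $Y_{{}_pA,\Delta}$, so the decomposition theorem and the fiber-dimension estimates apply. This gives $G_{\Delta_i}\in\mathbb Q(v)[v']$.

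Orthogonality of the standard basis then reduces everything to the self-pairings $([{}_pZ]_{\Delta},[{}_pZ]_{\Delta})_d$. The paper writes these as stabilized quantities by transporting a single standard element. Lemma~\ref{orth-f-stan} shows a priori that they lie in $1+v^{-1}\mathbb Z[v^{-1}]$, which rules out negative powers of $v'$ and gives convergence.

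Separately, your Step 2 is mistaken. One has $([{}_pD]_{\emptyset},[{}_pD]_{\emptyset})_d=1$ directly from Definition~\ref{in-product}, since the only contributing point is $(f',0)$ and $d_{D,\emptyset}=0$. There is no infinite product, and no alternative normalization to switch to.
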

\begin{proof}
    By Proposition \ref{stab}, there exist $ G_{\Delta_i}\in \mbb Q(v)[v',v'^{-1}]$ and $(_{p}Z_i,\Delta_i)\in \Xi_{n|n,d}$ such that
    $$
    _{d}m_{_{p}A,\Delta}=\sum_{i=1}^{m} G_{\Delta_i}(v, v^{-p})[_{p}Z_{i}]_{\Delta_i}.
    $$
    Similarly, we have
     $$
    _{d}m_{_{p}A',\Delta'}=\sum_{i=1}^{m'} G'_{\Delta'_i}(v, v^{-p})[_{p}Z'_{i}]_{\Delta'_i}.
    $$
    Then the proof of Lemma \ref{theta-equal} shows that
    for large enough $p$, there is a $K\in \mbb N$ which is independent of $p$, such that
    $\deg G_{\Delta_i}(v, v^{-p})\leq K$ and  $\deg G'_{\Delta'_i}(v, v^{-p})\leq K$ for all $i$.
    Therefore, we have $G_{\Delta_i}(v, v'), G'_{\Delta'_i}(v, v')\in \mbb Q(v)[v']$. 
    Moreover, $G_{\Delta_i}(v, v')$ and $G_{\Delta'_i}(v, v')$ can be written as
    \begin{align*}
        G_{\Delta_i}(v, v')&=a_{i,0}(v)+\sum_{t=1}^ra_{i, t}(v)(v')^t, \\
        G'_{\Delta'_i}(v, v')&=a'_{i, 0}(v)+\sum_{t=1}^{r'}a'_{i, t}(v)(v')^t,
    \end{align*}
    where $a_{i, 0}(v)$ and $a'_{i,0}(v)$ are independent of $p$.

    Then we have
    \begin{align*}
      (_{d}m_{_pA,\Delta}, {}_{d}m_{_pA',\Delta'})_d=\sum_{i, j} G_{\Delta_i}(v, v^{-p})G'_{\Delta'_j}(v, v^{-p})([_{p}Z_{i}]_{\Delta_i}, [_{p}Z'_{j}]_{\Delta'_j})_d.
    \end{align*}
    Next, we have
    \begin{align*}
        ([_{p}Z_{i}]_{\Delta_i}, [_{p}Z'_{j}]_{\Delta'_j})_d=v^{d_{_{p}Z_i,\Delta_i}-d_{{}_pZ_i^{t}, \Delta_i^{t}}}([{}_pD]_{\emptyset}, [{}_pZ_i^t]_{\Delta_i^t}\ast [{}_pZ'_j]_{\Delta'_j})_d.
    \end{align*}
    where $D$ is a diagonal matrix with $\ro({}_pD)=\co({}_pA)$. 
    
    By definition, $d_{_{p}Z_i,\Delta_i}-d_{{}_pZ_i^{t}, \Delta_i^{t}}$ is independent of $p$. Then apply \ref{stab}, 
    we have $([{}_pD]_{\emptyset}, [{}_pZ_i^t]_{\Delta_i^t}\ast [{}_pZ'_j]_{\Delta'_j})_d=H_{\Delta, \Delta'}(v, v^{-p})$ for some fixed $H_{\Delta, \Delta'}(v, v')\in \mathbb{Q}(v)[v', v'^{-1} ]$. 
    
    On the other hand, Lemma \ref{orth-f-stan} shows that $v^{d_{_{p}Z_i,\Delta_i}-d_{{}_pZ_i^{t}, \Delta_i^{t}}}H_{\Delta, \Delta'}(v, v^{-p})\in 1+v^{-1}\mbb Z[v^{-1}]$. 
    Then $v^{d_{_{p}Z_i,\Delta_i}-d_{{}_pZ_i^{t}, \Delta_i^{t}}}H_{\Delta, \Delta'}(v, v')\in \mathbb{Q}(v)[v']$ and when $p\to \infty$, we have  $v^{d_{_{p}Z_i,\Delta_i}-d_{{}_pZ_i^{t}, \Delta_i^{t}}}H_{\Delta, \Delta'}(v, v^{-p})$ converges to an element in $\mbb Q(v)$.
    
    Therefore, we have $(_{d}m_{_pA,\Delta}, {}_{d}m_{_pA',\Delta'})_d$ converges to an element in $\mbb Q(v)$.
\end{proof}

\begin{prop}
    Let $(A,\Delta)\in \widetilde{\Xi}$ with $|A|=d_0$, $\vartheta(\Delta)=h$ and set $d=d_0+np$, then for all large enough $p$ we have 
    $$
    \phi^{\m}_{d, d-n}(\{_{p}A\}_{\Delta, d})=v^{h}\{_{p-1}A\}_{\Delta, d-n}.
    $$
\end{prop}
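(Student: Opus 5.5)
The approach follows McGerty's argument for $q$-Schur algebras: show that $\phi^{\m}_{d,d-n}$ sends monomial elements to monomial elements up to the factor $v^{h}$, and is compatible with the bar involution. Then I will control the correction terms by Lemma \ref{theta-equal}.

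\textbf{Step 1: monomials.} By Lemma \ref{t-map f gen}, $\phi^{\m}_{d,d-n}$ is an algebra homomorphism. Hence applying it to ${}_{d}m_{{}_pA,\Delta}$ factor by factor is legitimate. Corollary \ref{t-map f bas}(i) sends each $[{}_pB_r]_{\emptyset,d}$ to $[{}_{p-1}B_r]_{\emptyset,d-n}$. For $p$ large every ${}_{p-1}B_r$ is still admissible, so no factor vanishes. Corollary \ref{t-map f bas}(ii) sends the unique factor $L_{{}_pD,h}$ to $v^{h}L_{{}_{p-1}D,h}$. Therefore
\[
\phi^{\m}_{d,d-n}({}_{d}m_{{}_pA,\Delta})=v^{h}\,{}_{d-n}m_{{}_{p-1}A,\Delta}.
\]

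\textbf{Step 2: triangular expansions.} By Lemma \ref{theta-equal}, for $p\gg0$ we can write
\[
{}_{d}m_{{}_pA,\Delta}=\{{}_pA\}_{\Delta,d}+\sum_{(A',\Delta')\prec(A,\Delta),\ \vartheta(\Delta')=h}\omega_{({}_pA,\Delta),({}_pA',\Delta')}\{{}_pA'\}_{\Delta',d}.
\]
The coefficients $\omega$ are bar-invariant, since the monomial and the canonical basis elements are both bar-invariant. The lower set $\{(A',\Delta')\prec(A,\Delta)\}$ is finite and, after the shift by $p$, independent of $p$. I will argue by induction on $(A,\Delta)$ with respect to $\prec$ (or on $\Phi$), with the inductive claim being the proposition itself for all such $(A',\Delta')$. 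The base case is a minimal element, where the monomial already equals the canonical basis element.

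\textbf{Step 3: induction step.} Applying $\phi^{\m}_{d,d-n}$ and using the induction hypothesis for each lower term gives
\[
\phi^{\m}_{d,d-n}(\{{}_pA\}_{\Delta,d})=v^{h}\Big({}_{d-n}m_{{}_{p-1}A,\Delta}-\sum\omega_{({}_pA,\Delta),({}_pA',\Delta')}\{{}_{p-1}A'\}_{\Delta',d-n}\Big).
\]
Here it is crucial that every $\Delta'$ in the sum has the same $\vartheta=h$, so that a uniform factor $v^{h}$ comes out. Expanding ${}_{d-n}m_{{}_{p-1}A,\Delta}$ by Lemma \ref{theta-equal} at level $p-1$ reduces the claim to showing that the coefficients $\omega$ agree at levels $p$ and $p-1$. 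That is, they must stabilize in $p$.

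\textbf{Main obstacle: stabilization of $\omega$.} I would obtain it from the inner product. By Proposition \ref{orth-f-c}, the canonical basis is almost orthonormal. Pairing the expansion of the monomial with itself and with the other monomials, and using Lemma \ref{conv mono} (convergence of $({}_dm,{}_dm)_d$ in $\mathbb Q((v^{-1}))$ to a rational function), determines the bar-invariant $\omega$'s recursively from limits that are independent of $p$ for $p$ large. This is the step where I expect the most difficulty. It needs the fact that bar-invariant elements are determined by their almost-orthonormality data, in the spirit of \cite{Mc12}, and it needs the error terms $v^{-p}\cdot(\text{bounded degree})$ to vanish for $p\gg0$, not merely in the limit.

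An alternative route avoids $\omega$. One shows directly that $v^{-h}\phi^{\m}_{d,d-n}(\{{}_pA\}_{\Delta,d})$ is bar-invariant, using bar-invariance on the generators and Step 1. One then shows it lies in $[{}_{p-1}A]_{\Delta}+\sum v^{-1}\mathbb Z[v^{-1}][\,\cdot\,]$. Uniqueness of the canonical basis then finishes the argument. The triangularity would itself follow from Step 3 once the leading coefficient $1$ is known. The $v^{-1}\mathbb Z[v^{-1}]$ condition would follow from Proposition \ref{orth-f-c} together with the fact that $\phi^{\m}$ asymptotically preserves the forms, again via Lemma \ref{conv mono}.
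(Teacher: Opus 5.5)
Your proposal is correct and follows essentially the same route as the paper: Step 1 is the paper's reduction via Corollary \ref{t-map f bas}, and your stabilization of the bar-invariant coefficients $\omega$ through Lemma \ref{theta-equal}, Proposition \ref{orth-f-c} and Lemma \ref{conv mono} is the inverse form of the paper's claim \eqref{cb-mb}, which says that $\{{}_pA\}_{\Delta,d}$ is a $p$-independent combination of monomials with the same $\vartheta$; the paper proves that claim by the same McGerty-style degree reduction against the inner product. Your worry about exact versus limiting stability does no harm, because convergence in $\mathbb Q((v^{-1}))$ makes any fixed window of top coefficients eventually constant, and a bar-invariant $\omega$ is determined by its coefficients in non-negative degrees.
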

\begin{proof}
    The proof is an adaptation of the arguments in \cite{Mc12,LW18} to the mirabolic setting. We include the
    details for completeness.
    
    Consider the monomial basis $\{_{d}m_{A,\Delta} \ |\ (A,\Delta)\in \Xi_{n|n,d} \}$ of $\MS$. 
    By Corollary\ref{t-map f bas}, we have
    $$
    \phi_{d, d-n}^{\m}({}_{d}m_{A,\Delta})=v^{\vartheta(\Delta)}{}_{d-n}m_{A-I,\Delta}
    $$
    where $_{d-n}m_{A-I,\Delta}=0$ if $(A-I, \Delta)\notin \Xi_{n|n, d-n}$.
    Therefore, we only need to prove that: 
    Let $(A,\Delta)\in \widetilde{\Xi}$ for all large enough integers $p$, we have
    \begin{align}
        \{_pA\}_{\Delta,d}={}_dm_{{}_pA,\Delta}+\sum_{\substack{(A',\Delta')\prec (A,\Delta)\\ \vartheta(\Delta')=\vartheta(\Delta)}}c_{(A',\Delta'), (A,\Delta)}{}_dm_{{}_pA',\Delta'},\label{cb-mb}
    \end{align}
    where $c_{(A',\Delta'), (A,\Delta)}$ is independent of $p$.

    By Lemma \ref{theta-equal}, we have
    \begin{align*}
        {}_{d}m_{_pA,\Delta}=\{_pA\}_{\Delta,d}+\sum_{\substack{(A',\Delta')\prec(A,\Delta)\\ \vartheta(\Delta')=\vartheta(\Delta)}}\omega_{(_pA,\Delta), ({}_pA',\Delta')}\{_pA'\}_{\Delta',d}.
    \end{align*}
    
    We shall prove \eqref{cb-mb} by mathematical induction on the partial order $\leq$.
    When $(A,\Delta)$ is minimal, we have $ {}_{d}m_{_pA,\Delta}=\{_pA\}_{\Delta,d}$ for all $p$ as desired.
    Now suppose \eqref{cb-mb} holds for all $(B,\Delta_B)\in \widetilde{\Xi}$ such that $(B,\Delta_B)\prec (A,\Delta)$.

    Let 
    $$
    I_d=\left\{(B,\Delta_B)\in \widetilde{\Xi}\ |\ \substack{(_pB,\Delta_B)\in \Xi_{n|n, d}, \\\co(B)=\co(A), \ro(B)=\ro(A),\\ (B,\Delta_B)\preceq (_{p}A,\Delta)\\ \vartheta(\Delta_B)=\vartheta(\Delta)=h}\right\}.
    $$
    Then for all large enough integer $p$, we have 
    $$
    I_d=\{(B,\Delta_B)\in \widetilde{\Xi}\ |\ \co(B)=\co(A), \ro(B)=\ro(A), (B,\Delta_B)\preceq (_{p}A,\Delta), \vartheta(\Delta_B)=h\}.
    $$
    This fact can be obtained by a similar argument in the proof of \cite[Lemma 5.1]{LW18} and $I_d$ is a finite set that is independent of $p\gg 0$.

    For any element $u\in \mcal A$, $\deg (u)$ denotes its degree. Let $x$ be an $\mcal A$-linear combination of $\{\{_pB\}_{\Delta_B,d}\ |\ (B,\Delta_B)\in I_d\}$,
    we set 
    $$
    \mrm n(x)=\max \{\deg ( x, \{_pB\}_{\Delta_B,d})_d\ |\ \{_pB\}_{\Delta_B,d}\in I_d, (B, \Delta_B)\neq (A,\Delta)\}
    $$
    and $\mrm n_p=\mrm n(_{d}m_{_pA,\Delta})$.
    
    If $\mrm n_p<0$ we set $b_{_pA,\Delta}= {}_{d}m_{_pA,\Delta}$. 
    Now suppose that $\mrm n_p\geq 0$, we define
    $$
     J_d=\{(B,\Delta_B)\in I_d\setminus{\{(A,\Delta)\}}\ |\ \deg ({}_{d}m_{{}_pA,\Delta}, \{_pB\}_{\Delta_B,d})_d=\mrm n_p\}.
    $$
    For each $(B,\Delta_B)\in I_d$, 
    $$
     ( _{d}m_{{}_pA,\Delta}, \{_pB\}_{\Delta_B,d})_d=\sum_{i\leq n_p}c_{B,\Delta_B,p, i}v^{i}\in \mathcal{A},
    $$
    where $c_{B,\Delta_B,p, i}\in \mathbb Z$ and $c_{B,\Delta_B,p, \mrm n_p}\begin{cases}
        \neq 0 & \text{ if } (B,\Delta_B)\in J_d,\\
        =0 & \text{ otherwise }.
    \end{cases}$

    Therefore, we can define a new element $_{d}m'_{_pA,\Delta}$ in $\MS$ which is bar-invariant as
    $$
    _{d}m'_{_pA,\Delta}=
    \begin{cases}
      _{d}m_{_pA,\Delta}-\sum_{(B,\Delta_B)\in J_d} c_{B,\Delta_B,p, \mrm n_p}(v^{n_p}+v^{-n_p})\{_pB\}_{\Delta_B,d} & \text{ if } \mrm n_p>0,\\
      _{d}m_{_pA,\Delta}-\sum_{(B,\Delta_B)\in J_d} c_{B,\Delta_B,p,\mrm n_p}\{_pB\}_{\Delta_B,d} & \text{ if } \mrm n_p=0.
    \end{cases}
    $$
    Now we investigate the relationship between $\mrm n(_{d}m'_{_pA,\Delta})$ and $\mrm n_p$. For $\mrm n_p>0$
    \begin{align*}
        (& _{d}m'_{_pA,\Delta}, \{_pB\}_{\Delta_B, d})_d\\=& ({}_{d}m_{_pA,\Delta}, \{_pB\}_{\Delta_B, d})_d
        - (\sum_{(B',\Delta_{B'})\in J_d}c_{B',\Delta_{B'},p, \mrm n_p}(v^{\mrm n_p}+v^{-\mrm n_p})\{_pB'\}_{\Delta_{B'},d}, \{_pB\}_{\Delta_{B}, d})_d\\
        \equiv& \sum_{i\leq \mrm n_p-1}c_{B,\Delta_B,p, i}v^{i}-\sum_{\substack{(B',\Delta_{B'})\in J_d\\
        (B',\Delta_{B'})\neq(B,\Delta_B)}}
        c_{(B',\Delta_{B'}),p,n_p}v^{n_p}
        ( \bigl\{{}_pB'\bigr\}_{\Delta_{B'},d}, \bigl\{{}_pB\bigr\}_{\Delta_B,d})_d
        \mod{v^{-1}\mathbb Z[v^{-1}]}.
    \end{align*}
    The calculation for $\mrm n_p=0$ is similar. Then using Proposition \ref{orth-f-c}, we have $\mrm n(_{d}m'_{_pA,\Delta})<n_p$.

    Repeating the above procedure for $_{d}m'_{_pA,\Delta}$, we will obtain a bar-invariant element $_{d}m''_{_pA,\Delta}$ with $\mrm n(_{d}m''_{_pA,\Delta})<\mrm n(_{d}m'_{_pA,\Delta})$. 
    We can repeat this process again and again to produce new bar-invariant elements. 
    Since $\mrm n_p\geq 0$, after finitely many steps we can obtain a bar-invariant element $b_{_pA, \Delta}$ that $n(b_{_pA,\Delta})<0$.
    
    By the construction, we have
    $$
    b_{_pA,\Delta}=\{_pA\}_{\Delta,d}+\sum_{(_{p}A,\Delta)\neq (B,\Delta_B)\in I_d}f_{B,\Delta_B}\{_pB\}_{\Delta_B,d}
    $$
    where $f_{B, \Delta_B}\in \mcal A$ and $\overline{f_{B, \Delta_B}}=f_{B, \Delta_B}$. However, if there is a $f_{B, \Delta_B}\neq 0$, then $\mrm n(b_{_pA,\Delta})\geq 0$ which contradicts our construction. Therefore, we have $ b_{_pA,\Delta}=\{_pA\}_{\Delta,d}$.

    In the process, the construction only depends on the first $\mrm n_p$ coefficients of $ ( _{d}m_{_pA,\Delta}, \{_pB\}_{\Delta_B,d})_d$ 
    and those of $( \{_pB'\}_{\Delta_{B'},d}, \{_pB\}_{\Delta_B,d})_d$ for $(B,\Delta_B)\in I_d$ and $(B',\Delta_{B'})\in J_d$. 
    We have seen that $I_d$ is independent of $p\gg 0$.
    So we need to prove that $\mrm n_p\geq 0$ and for $0\leq i\leq \mrm n_p$, $c_{B,\Delta_B, p, i}$ are independent of $p\gg 0$.

    By the inductive assumption, $\{B\}_{\Delta_B}$, where $(B,\Delta_B)\in I_d$, can be written as
    $$
    \{B\}_{\Delta_B,d}={}_d m_{B,\Delta_B}+\sum_{\substack{(C,\Delta_C)\prec(B,\Delta_B)\\ \vartheta(\Delta_C)=\vartheta(\Delta_B)}}a_{(B,\Delta_B), (_pC,\Delta_C)} {}_{d}m_{_{p}C,\Delta_C},
    $$
    where $a_{(B,\Delta_B), (C,\Delta_C)}$ is independent of $p \gg 0$.
    Then apply $({}_d m_{_pA,\Delta}, -)_d$ to both sides, we have
    \begin{align*}
         ({}_d m_{{}_pA,\Delta}, \{_pB\}_{\Delta_B,d})_d =&({}_d m_{{}_pA,\Delta}, {}_d m_{_pB,\Delta_B})_d\\
         &+\sum_{\substack{(C,\Delta_C)\prec(B,\Delta_B)\\ \vartheta(\Delta_C)=\vartheta(\Delta_B)}}a_{(B,\Delta_B), (C,\Delta_C)}({}_d m_{{}_pA,\Delta}, {}_{d}m_{{}_{p}C,\Delta_C})_d.
    \end{align*}
    Using $ ({}_d m_{_pA,\Delta}, \{_pB\}_{\Delta_B,d})_d=\sum_{i\leq n_p}c_{B,\Delta_B,p, i}v^{i}$ and Lemma \ref{conv mono}, 
    we conclude that $n_p$ and $c_{B,\Delta_B, p, i}$ for $0\leq i\leq \mrm n_p$, are independent of $p\gg 0$. 
    
    Finally, regard $\{_pA\}_{\Delta,d}$ as $b_{{}_pA,\Delta}$, the construction of $b_{p_A,\Delta}$ shows \eqref{cb-mb} holds. 
    \end{proof}
    
    We introduce a new $\mathbb{Q}(v)$-linear map $\Psi_d:\mmuvgln\to {}_{\mbb Q(v)}\MS$ be the composition of the following maps
    $$
    \mmuvgln\xrightarrow{\varrho}\mmuvsln\xrightarrow{\phi_d} {}_{\mbb Q(v)}\MS.
    $$
    In general $\Psi_d\neq \Phi_d$, but by definition one can see that $\Psi_d= \Phi_d$ when restricted to $\mmuvgln\langle d\rangle$. 
    Moreover we have the following commutative diagram.
    \begin{equation}\label{inverse-sys}
    \begin{tikzcd}
                                & \mmuvgln \arrow[d, "\varrho"] \arrow[ldd, "\Psi_{d+n}"'] \arrow[rdd, "\Psi_d"] &   \\
                               & \mmuvsln \arrow[ld, "\phi^{\m}_{d+n}"] \arrow[rd, "\phi^{\m}_d"']                         &   \\
    {}_{\mbb Q(v)}\mathcal{MS}_{n, d+n} \arrow[rr, "{\phi^{\m}_{d+n,d}}"] &                                                                         & {}_{\mbb Q(v)}\MS
    \end{tikzcd}
    \end{equation}

    The commutativity of the left and right triangles follows from the definition. 
    For the lower triangle, it can be seen by using Corollary \ref{t-map f bas} 
    and the definition of $\phi^{\m}_d$. 

\begin{prop}\label{gln-sln}
     Let $(A,\Delta)\in \widetilde{\Xi}$. Then for all sufficiently large integers $p$ we have 
     $$
     \varrho(\{_{p}A\}_{\Delta})=v^{\vartheta(\Delta)}\varrho(\{_{p-1}A\}_{\Delta}).
     $$
\end{prop}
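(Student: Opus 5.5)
The plan is to compare $x_p:=v^{-p\vartheta(\Delta)}\varrho(\{_pA\}_{\Delta})$ for consecutive large $p$; we must show $x_p=x_{p-1}$. Both elements lie in the same block $1_{\lam}\mmuvsln1_{\muu}$ of $\mmuvsln\langle\bar d_0\rangle$, where $\lam,\muu$ are the classes of $\ro(A),\co(A)$ and $d_0=|A|$. We test them against the surjections $\phi^{\m}_{d'}$, $d'=d_0+np'$, using the commutative diagram \eqref{inverse-sys} and the transfer compatibility of canonical bases proved just above. Write $h=\vartheta(\Delta)$ and $d=d_0+np$.

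First, I compute the images at levels $d'\le d$. Since $\{_pA\}_{\Delta}\in\mmuvgln\langle d\rangle$, we have $\phi^{\m}_d(\varrho(\{_pA\}_\Delta))=\Psi_d(\{_pA\}_\Delta)=\Phi_d(\{_pA\}_\Delta)=\{_pA\}_{\Delta,d}$ by Lemma \ref{cb-gln to schur}. The lower triangle of \eqref{inverse-sys} gives $\phi^{\m}_{d'}=\phi^{\m}_{d'+n,d'}\circ\cdots\circ\phi^{\m}_{d,d-n}\circ\phi^{\m}_{d}$ on $\mmuvsln\langle\bar d_0\rangle$. Iterating the preceding proposition, valid since all $p'$ involved are large, yields
\[
\phi^{\m}_{d'}(\varrho(\{_pA\}_\Delta))=v^{h(p-p')}\{_{p'}A\}_{\Delta,d'} .
\]
The same computation starting from $p-1$ gives $\phi^{\m}_{d'}(v^h\varrho(\{_{p-1}A\}_\Delta))=v^{h}v^{h(p-1-p')}\{_{p'}A\}_{\Delta,d'}$. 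Hence $\phi^{\m}_{d'}(x_p)=\phi^{\m}_{d'}(x_{p-1})=v^{-hp'}\{_{p'}A\}_{\Delta,d'}$ for all $p_0\le p'\le p-1$. The threshold $p_0$ comes from the previous proposition. By the same argument, for every $p\ge p'+1$ the image $\phi^{\m}_{d'}(x_p)$ is independent of $p$.

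Second, I must pass from agreement at these finitely many levels to equality in $\mmuvsln$. Here I would use the finiteness statement from the proof of the previous proposition, adapted from \cite[Lemma 5.1]{LW18}. The indexing set $I_d$ of pairs $(B,\Delta_B)$ with $(_pB,\Delta_B)\preceq(_pA,\Delta)$ and fixed row and column sums is finite and independent of $p\gg0$. Expanding $\{_pA\}_\Delta$ and $\{_{p-1}A\}_\Delta$ in the standard basis via Proposition \ref{bas-gln} and using \eqref{shift-rho}, I express $x_p-x_{p-1}$ as a combination of elements $\varrho([_pB]_{\Delta_B})$ with $(B,\Delta_B)$ in a finite set $S$ independent of $p$. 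Then I show that for $p'$ large the map $\phi^{\m}_{d'}$ is injective on the span of $\{\varrho([_{p'}B]_{\Delta_B})\}_{(B,\Delta_B)\in S}$: by Lemma \ref{cb-gln to schur} these map to distinct standard basis elements $[_{p'}B]_{\Delta_B,d'}$ of $\mathcal{MS}_{n,d'}$ once $(_{p'}B,\Delta_B)\in\Xi_{n|n,d'}$ for all of $S$. Fixing such a $p'$ and choosing $p\ge p'+1$ then forces $x_p-x_{p-1}=0$, as required.

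The main obstacle is this second step. It requires rewriting $\varrho([_pB]_{\Delta_B})$ and $\varrho([_{p-1}B]_{\Delta_B})$ inside one $p$-independent finite family. The shift $\xi^{\m}_1$ only relates them up to the monomial and stabilization expansions of Proposition \ref{stab}, and it does not commute with the bar involution, since $\xi^{\m}_1(L_{D,h})=v^{-h}L_{D+I,h}$. I would first handle this by working with the monomial basis $M_{A,\Delta}$, on which $\xi^{\m}_1$ acts by the scalar $v^{-h}$. I would then invoke the $p$-independence of the transition coefficients $c_{(A',\Delta'),(A,\Delta)}$ in \eqref{cb-mb}, which involve only terms with $\vartheta(\Delta')=h$. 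This gives $\{_pA\}_\Delta$ and $v^h\xi^{\m}_1(\{_{p-1}A\}_\Delta)$ as the same combination of monomials, and applying $\varrho$ together with \eqref{shift-rho} finishes the proof.
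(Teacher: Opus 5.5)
Your last paragraph is a valid proof, but by a different route from the paper's. Once you have it, your first two steps are unnecessary. The second step, as first stated, also does not close on its own: relating $\varrho([{}_pB]_{\Delta_B})$ to $\varrho([{}_{p'}B]_{\Delta_B})$ requires knowing that $\xi^{\m}_{1}$ is unitriangular on standard basis elements.

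The paper shifts first and then projects. It asserts $\xi^{\m}_{-1}(\{{}_pA\}_{\Delta})=v^{\vartheta(\Delta)}\{{}_{p-1}A\}_{\Delta}+\sum f_{B,\Delta_B}\{B\}_{\Delta_B}$ with only lower terms, and applies $\Phi_{d-n}$. It then uses the square $\Phi_{d-n}\circ\xi^{\m}_{-1}=\phi^{\m}_{d,d-n}\circ\Phi_d$ together with the preceding proposition. It kills the $f_{B,\Delta_B}$ by linear independence of the canonical basis of $\mathcal{MS}_{n,d-n}$, and finally applies $\varrho$ and \eqref{shift-rho}.

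You lift first and then shift. You transport the $p$-independent expansion \eqref{cb-mb} into $\mmuvgln$, so $\xi^{\m}_1$ only ever acts on monomials. There it is the uniform scalar $v^{-h}$, because every term of \eqref{cb-mb} has $\vartheta(\Delta')=h$. This bypasses both the transfer map and the unitriangularity of $\xi^{\m}_{-1}$ on the canonical basis, which the paper's first expansion needs and which it justifies only briefly. It also shows directly where the factor $v^{\vartheta(\Delta)}$ comes from. The cost is that you depend on the internal identity \eqref{cb-mb} with $p$-independent coefficients, rather than only on the statement of the preceding proposition.

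One step you leave implicit must be written out, because \eqref{cb-mb} holds in $\mathcal{MS}_{n,d}$, not in $\mmuvgln$. You want $\{{}_pA\}_{\Delta}=M_{{}_pA,\Delta}+\sum c_{(A',\Delta'),(A,\Delta)}M_{{}_pA',\Delta'}$ in $\mmuvgln$. To get it, use three facts:
\begin{itemize}
\item $\Phi_d(M_{{}_pA',\Delta'})={}_dm_{{}_pA',\Delta'}$;
\item Lemma~\ref{cb-gln to schur};
\item both sides lie in the span of the finitely many $[B]_{\Delta_B}$ below $({}_pA,\Delta)$ with the same row and column sums.
\end{itemize}
For $p\gg0$ all of these $(B,\Delta_B)$ lie in $\Xi_{n|n,d}$, so $\Phi_d$ is injective on that span. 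Apply this at both $p$ and $p-1$ with the same coefficients, and the rest of your argument goes through.
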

\begin{proof}
    Denote $|A|=d_0$ and $d=d_0+np$.
    First, using
    $$
    \{_pA\}_{\Delta}= [_pA]_{\Delta}+\sum_{(_pA', \Delta')\sqsubset (_pA,\Delta)}\pi_{(_pA,\Delta), (_pA',\Delta')}[{}_pA']_{\Delta'}, 
    $$
    $$
    [_pA]_{\Delta}+ \text{lower-terms}=M_{_pA,\Delta},
    $$
    and $\xi_{-1}^{\m}(M_{_pA,\Delta})=v^{\vartheta(\Delta)}M_{_pA-I, \Delta}$, we have
     $$
    \xi^{\m}_{-1}(\{_pA\}_{\Delta})= v^{\vartheta(\Delta)}\{_{p-1}A\}_{\Delta}+\sum_{(B, \Delta_B)\in \Xi_{n|n, d-n}}f_{B, \Delta_B}\{B\}_{\Delta_B}.
    $$

    Next, Lemma \ref{cb-gln to schur} shows that for large enough $p$, we have
    \begin{align*}
        \Phi_d(\{{}_pA\}_{\Delta})&=\{{}_pA\}_{\Delta, d},\\
          \Phi_{d-n}(\{{}_{p-1}A\}_{\Delta})&=\{{}_{p-1}A\}_{\Delta, d-n}.
    \end{align*}
    
    By Lemma \ref{t-map f bas} and diagram \eqref{inverse-sys}, we have the following commutative diagram.
    
    \begin{center}
           \begin{tikzcd}
    \mmuvgln \arrow[rr, "\xi^{\m}_{-1}"] \arrow[d, "\Phi_d"'] &  & \mmuvgln \arrow[d, "\Phi_{d-n}"] \\
    \MS \arrow[rr, "{\phi^{\m}_{d, d-n}}"]               &  & \mathcal{MS}_{n, d-n}                  
    \end{tikzcd}
    \end{center}

    Therefore we have
    \begin{align*}
        v^{\vartheta(\Delta)}\{_{p-1}A\}_{\Delta, d-n}=&\phi^{\m}_{d, d-n}\Phi_d(\{_{p}A\}_{\Delta})\\
        =& \Phi_{d-n}\xi_{-1}^{\m}(\{_{p}A\}_{\Delta})\\
        = & v^{\vartheta(\Delta)}\{_{p-1}A\}_{\Delta, d-n}+\sum_{\substack{(B, \Delta_B)\in \Xi_{n|n, d-n}\\ (B,\Delta_B)\sqsubset(_{p-1}A,\Delta)}}f_{B, \Delta_B}\{B\}_{\Delta_B, d-n}.
    \end{align*}
    It follows that $f_{B,\Delta_B}=0$. Thus, for all large enough integer $p$, 
    $$
    \xi^{\m}_{-1}(\{_pA\}_{\Delta})=v^{\vartheta(\Delta)}\{_{p-1}A\}_{\Delta}.
    $$
    Moreover, we have
    $$
    \varrho(\{_pA\}_{\Delta})=\varrho \xi_{-1}^{\m}(\{_pA\}_{\Delta})=v^{\vartheta(\Delta)}\varrho(\{_{p-1}A\}_{\Delta}).
    $$
\end{proof}

Define an equivalence relation on $\widetilde{\Xi}$ defined by 
$$
(A,\Delta)\thicksim (A',\Delta') \text{ if and only if } A-A'=kI \text{ and } \Delta=\Delta'.
$$
We denote the set of equivalence classes by $\widehat{\Xi}$.

For any $\widehat{(A,\Delta)}\in \widehat{\Xi}$ 
we choose a representative $(A,\Delta)\in \Xi_{n|n, d+np}$. 
For large enough $p$, we define
$b_{\widehat{(A,\Delta)}}=v^{-\vartheta(\Delta)\left\lfloor\frac{|A|}{n}\right\rfloor}\varrho(\{A\}_{\Delta})$, 
where $\lfloor a\rfloor$ denotes the greatest integer less than or equal to \(a\).
If we have $B=A+rI$ then
\begin{align*}
    b_{\widehat{(B,\Delta)}}&=v^{-\vartheta(\Delta)\left\lfloor\frac{|B|}{n}\right\rfloor}\varrho(\{B\}_{\Delta})\\
    &=v^{-\vartheta(\Delta)\left\lfloor\frac{|A|+rn}{n}\right\rfloor}\varrho(\{B\}_{\Delta})\\
    &=v^{-\vartheta(\Delta)\left\lfloor\frac{|A|}{n}\right\rfloor}v^{-r\vartheta(\Delta)}\varrho(\{B\}_{\Delta})\\
\end{align*}
By Proposition \ref{gln-sln}, 
$$
v^{-r\vartheta(\Delta)}\varrho(\{B\}_{\Delta})=\varrho(\{A\}_{\Delta}).
$$
So $b_{\widehat{(A,\Delta)}}$ is a well-defined element in $\mmuvsln$. 

\begin{prop}\label{im of cb-sln}
For $\widehat{(A,\Delta)}\in \widehat{\Xi}$ with representative $(A,\Delta)$ such that $|A|=d=d_0+np$, 
we have $\phi^{\mathfrak{m}}_d(b_{\widehat{(A,\Delta)}})=v^{-\vartheta(\Delta)\lfloor \frac{d}{n}\rfloor}\{A\}_{\Delta, d}$ 
for large enough integer $p$.   
\end{prop}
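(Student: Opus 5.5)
The plan is to unwind the definitions: $b_{\widehat{(A,\Delta)}}$ is, up to the explicit power of $v$, the image under $\varrho$ of $\{A\}_{\Delta}\in\mmuvgln$, and $\phi^{\m}_d\circ\varrho=\Psi_d$. Fix the representative $(A,\Delta)$ with $|A|=d$. By definition,
\[
\phi^{\m}_d(b_{\widehat{(A,\Delta)}})=v^{-\vartheta(\Delta)\lfloor d/n\rfloor}\,\phi^{\m}_d\bigl(\varrho(\{A\}_{\Delta})\bigr)=v^{-\vartheta(\Delta)\lfloor d/n\rfloor}\,\Psi_d(\{A\}_{\Delta}).
\]
It therefore suffices to show $\Psi_d(\{A\}_{\Delta})=\{A\}_{\Delta,d}$.

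First, $\{A\}_{\Delta}$ lies in the block $\mmuvgln\langle d\rangle$, because $\{A\}_{\Delta}$ is a combination of $[A']_{\Delta'}$ with $\ro(A')=\ro(A)$ and $\co(A')=\co(A)$ (Proposition \ref{bas-gln}), so $|\ro(A')|=d$. On this block $\varrho$ agrees with $\varrho_d$. Composing with $\phi^{\m}_d=\Phi_d\circ\varrho_d^{-1}\circ\pi^{\m}_{\bar d}$ gives $\Psi_d=\Phi_d$ on $\mmuvgln\langle d\rangle$, as the paper notes. Hence $\Psi_d(\{A\}_{\Delta})=\Phi_d(\{A\}_{\Delta})$. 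Since $(A,\Delta)\in\Xi_{n|n,d}$, Lemma \ref{cb-gln to schur} gives $\Phi_d(\{A\}_{\Delta})=\{A\}_{\Delta,d}$, which finishes the argument.

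The one point needing care, and the main obstacle, is the well-definedness of $b_{\widehat{(A,\Delta)}}$ with respect to the representative. The statement allows any large $p$, and the normalization $v^{-\vartheta(\Delta)\lfloor d/n\rfloor}$ is exactly what cancels the factor $v^{\vartheta(\Delta)}$ in Proposition \ref{gln-sln}. I would check that replacing $A$ by $A+rI$, with both representatives large, changes neither side. On the left this is the computation already given after the definition of $b$. On the right it is the compatibility $\phi^{\m}_{d+n,d}(\{{}_{p}A\}_{\Delta,d+n})=v^{\vartheta(\Delta)}\{{}_{p-1}A\}_{\Delta,d}$ from the transfer-map proposition, together with the commutative diagram \eqref{inverse-sys}.

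Once this consistency is in place, the identity for one sufficiently large $p$ follows directly from the computation above. In fact no largeness is needed beyond ensuring $(A,\Delta)\in\Xi_{n|n,d}$, i.e.\ that the diagonal entries are nonnegative and positive at decorated diagonal positions. This is the hypothesis under which Lemma \ref{cb-gln to schur} applies.
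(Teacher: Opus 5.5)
Your argument is the paper's proof. For a large representative, $\phi^{\m}_d(b_{\widehat{(A,\Delta)}})=v^{-\vartheta(\Delta)\lfloor d/n\rfloor}\Psi_d(\{A\}_{\Delta})$. Then $\Psi_d=\Phi_d$ on the block $\mmuvgln\langle d\rangle$ containing $\{A\}_{\Delta}$, and Lemma~\ref{cb-gln to schur} gives $\{A\}_{\Delta,d}$. The well-definedness of $b_{\widehat{(A,\Delta)}}$, which you flag as the main obstacle, is already settled right after its definition via Proposition~\ref{gln-sln}. So your extra consistency check through the transfer maps is not needed.

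One correction: drop the closing claim that no largeness is needed beyond $(A,\Delta)\in\Xi_{n|n,d}$. Lemma~\ref{cb-gln to schur} itself needs nothing more, but your first step does. That step is $b_{\widehat{(A,\Delta)}}=v^{-\vartheta(\Delta)\lfloor d/n\rfloor}\varrho(\{A\}_{\Delta})$ for the representative you fixed. Independence of the representative comes from Proposition~\ref{gln-sln}. That proposition rests on Lemma~\ref{theta-equal} and on the transfer-map identity $\phi^{\m}_{d,d-n}(\{{}_pA\}_{\Delta,d})=v^{\vartheta(\Delta)}\{{}_{p-1}A\}_{\Delta,d-n}$, and both are proved only for $p\gg 0$. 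Descending to a small representative would require that identity for every $d$. The paper does not establish this; it only shows that $\phi^{\m}_{d+n,d}$ is positive (Proposition~\ref{pos-trans}). This is why the statement is restricted to large $p$.
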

\begin{proof}
    By definition, we have
    \begin{align*}
        \phi^{\m}_d(b_{\widehat{A,\Delta}})=& \phi^{\m}_d(v^{-\vartheta(\Delta)\lfloor \frac{d}{n}\rfloor}\varrho(\{A\}_{\Delta}))\\
        =& v^{-\vartheta(\Delta)\lfloor \frac{d}{n}\rfloor}\Psi_d (\{A\}_{\Delta})\\
        =&  v^{-\vartheta(\Delta)\lfloor \frac{d}{n}\rfloor}\Phi_d (\{A\}_{\Delta})\\
        =& v^{-\vartheta(\Delta)\lfloor \frac{d}{n}\rfloor} \{A\}_{\Delta,d}.
    \end{align*}
\end{proof}

For $r\in\mathbb N$, we have
\[
[r]=\frac{v^r-v^{-r}}{v-v^{-1}},
\qquad
[r]!=\prod_{s=1}^{r}[s],
\]
with the convention that $[0]!=1$. 

For $1\leq i\leq n-1$ and $r\in\mathbb N$, the $r$-th divided powers
of $E_i$ and $F_i$ are defined by
\[
E_i^{(r)}=\frac{E_i^r}{[r]!},
\qquad
F_i^{(r)}=\frac{F_i^r}{[r]!}.
\]

Let ${}_{\mathcal A}\mmuvsln$ be the $\mathcal A$-subalgebra of
$\mmuvsln$ generated by
\[
1_{\bar\lambda},\qquad
E_i^{(r)}1_{\bar\lambda},\qquad
F_i^{(r)}1_{\bar\lambda},\qquad
L1_{\bar\lambda},
\]
where $\bar\lambda\in \mathbb X$, $1\leq i\leq n-1$, and $r\geq1$.
We refer to ${}_{\mathcal A}\mmuvsln$ as the integral form of $\mmuvsln$.

\begin{thm}\label{cb-sln}
    The set $\mathbf{M\dot B}(\slln)=\{b_{\widehat{(A,\Delta)}}\ |\ \widehat{(A,\Delta)}\in \widehat{\Xi}\}$ forms a basis of ${}_{\mcal A}\mmuvsln$ and therefore forms a basis of $\mmuvsln$. 
\end{thm}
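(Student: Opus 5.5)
We reduce the statement to the corresponding facts for $\mmuvgln$ by means of the isomorphisms $\varrho_d$. The algebra $\mmuvsln$ decomposes as $\bigoplus_{\bar d}\mmuvsln\langle\bar d\rangle$. For each residue $\bar d$ we fix a representative $d_0\in\{0,\dots,n-1\}$, so that $\varrho_{d_0}:\mmuvgln\langle d_0\rangle\to\mmuvsln\langle\bar d\rangle$ is an algebra isomorphism. Every class $\widehat{(A,\Delta)}$ with $|A|\equiv d_0 \pmod n$ has a unique representative $(A_0,\Delta)$ with $|A_0|=d_0$. Using Proposition~\ref{gln-sln}, namely $\varrho\circ\xi^{\m}_p=\varrho$ and $\xi^{\m}_{-1}(\{{}_pA\}_\Delta)=v^{\vartheta(\Delta)}\{{}_{p-1}A\}_\Delta$ for $p\gg0$, we obtain
\[
b_{\widehat{(A,\Delta)}}=v^{c}\,\varrho_{d_0}(\{A_0\}_\Delta)
\]
for an explicit integer $c$ (a power of $v$ coming from the normalization by $\lfloor |A|/n\rfloor$). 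This identity uses that the shift $\xi^{\m}_{-1}$ relation, iterated, transports $\{{}_pA_0\}_\Delta$ down to $\{A_0\}_\Delta$. That iteration requires care, since Proposition~\ref{gln-sln} only holds for large $p$. Instead, we use directly that $\varrho(\{{}_pA_0\}_\Delta)=v^{\vartheta(\Delta)p}\varrho(\{A_0\}_\Delta)$ for all $p\geq p_0$, and we define $b$ via large $p$ as in the text. In every case, $b_{\widehat{(A,\Delta)}}$ equals a unit of $\mathcal A$ times $\varrho_{d_0}$ of the element $\{{}_{p}A_0\}_\Delta$ for a fixed large $p$. Since $\xi^{\m}_p$ is an automorphism preserving the $\mathcal A$-form and sending the canonical basis to itself up to powers of $v$ (for large $p$), we may work with $\varrho_{d}$, $d=d_0+np$, applied to the canonical basis $\{\{B\}_{\Delta'}\mid |B|=d\}$ of ${}_{\mcal A}\mmuvgln\langle d\rangle$ given by Proposition~\ref{bas-gln}.

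\textbf{Linear independence and spanning over $\mbb Q(v)$.} The restriction of Proposition~\ref{bas-gln} to the block $\langle d\rangle$ shows that $\{\{B\}_{\Delta'}\}_{|B|=d}$ is a basis of $\mmuvgln\langle d\rangle$. Since $\varrho_d$ is an isomorphism, the images form a basis of $\mmuvsln\langle\bar d\rangle$. The matrices $B$ with $|B|=d$ are in bijection with the classes in $\widehat\Xi$ of residue $\bar d$, because $B\mapsto B+kI$ changes $|B|$ by $nk$. Up to the units $v^{c}$, these images are exactly the elements $b_{\widehat{(B,\Delta')}}$. Here we must check that the elements $\{B\}_{\Delta'}$ with small diagonal are compatible, i.e.\ that $\varrho(\{B\}_{\Delta'})$ is a unit multiple of $\varrho(\{{}_pB\}_{\Delta'})$ for every $B$ and not only asymptotically. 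To handle this we avoid the small-$p$ issue: for each $d$ in the residue class, $\varrho_d$ gives a basis, and for $d\gg0$ (depending on the finitely many classes considered at a time) all elements are given by representatives with large $p$. A basis statement can be checked on each finite weight space $1_{\bar\lambda}(\cdot)1_{\bar\mu}$ with bounded degree filtration. We therefore compare $\varrho_d$ for a single sufficiently large $d$ chosen per pair $(\bar\lambda,\bar\mu)$ and per finite set of classes.

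\textbf{Integral form.} We must show that $\varrho_d$ maps ${}_{\mcal A}\mmuvgln\langle d\rangle$ onto ${}_{\mcal A}\mmuvsln\langle\bar d\rangle$. By construction, $\varrho_d$ sends the generators $1_\lambda, E_i1_\lambda, F_i1_\lambda, L1_\lambda$ to the corresponding generators, and commutes with divided powers. It then suffices to know that ${}_{\mcal A}\mmuvgln$ is generated over $\mathcal A$ by $1_\lambda$, $E_i^{(r)}1_\lambda$, $F_i^{(r)}1_\lambda$ and $L1_\lambda$. This follows from the monomial basis $M_{A,\Delta}$. Each factor $[D+aE_{h,h\pm1}]_\emptyset$ equals $E_h^{(a)}1_\lambda$ or $F_h^{(a)}1_\lambda$, as in BLM. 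The factor $L_{D,h}$ must be expressed through $L1_\lambda$ and these generators; we would write $L_{D,h}$ as a product involving $F$'s and $E$'s conjugating $L$ to position $h$ and verify the identity via Lemma~\ref{leading}. The unitriangularity of $M_{A,\Delta}$ with respect to $[A]_\Delta$ over $\mathcal A$, together with the same unitriangularity for $\{A\}_\Delta$, then shows that the $\mathcal A$-span of the canonical basis equals the $\mathcal A$-subalgebra generated by those elements.

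The main obstacle is this last step: showing that $L_{D,h}$ lies in the $\mathcal A$-algebra generated by $L1_\lambda$ and divided powers. We would first try an induction on $h$, proving $L_{D,h}$ equals a suitable product $F^{(\cdot)}_{h-1}\cdots L\cdots E^{(\cdot)}_{h-1}$ up to lower canonical terms, and show directly that those terms are themselves integral combinations of generated elements.
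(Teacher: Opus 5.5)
There is a gap in the spanning step, and the integral-form step is left open.

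\textbf{Spanning.} Your linear-independence argument is fine, and it is a reasonable variant of the paper's. For a finite set of classes in one block, take $d=d_0+np$ with $p\gg0$. Then each class has a large representative with $|B|=d$, and $b_{\widehat{(B,\Delta')}}$ is a unit times $\varrho_d(\{B\}_{\Delta'})$. Since $\varrho_d$ is an isomorphism and these $\{B\}_{\Delta'}$ are distinct elements of the basis in Proposition~\ref{bas-gln}, you are done. The paper instead maps to $\MS$ by $\phi^{\m}_d$ and uses Proposition~\ref{im of cb-sln}.

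The identifications you use for spanning, however, do not hold. Proposition~\ref{gln-sln} is valid only for $p\ge p_0(A,\Delta)$, and $p_0$ is not uniform in the class. So neither $\varrho(\{{}_pA_0\}_\Delta)=v^{\vartheta(\Delta)p}\varrho(\{A_0\}_\Delta)$ nor the claim that the $\varrho_d(\{B\}_{\Delta'})$ with $|B|=d$ are, up to units, exactly the $b$'s follows. Each $1_{\bar\lambda}\mmuvsln1_{\bar\mu}$ is infinite-dimensional, and for any fixed $d$ infinitely many of its classes have representatives with very negative diagonal. Also, $\varrho_{d_0}$ cannot be applied to $\{{}_pA_0\}_\Delta$, which lies in $\mmuvgln\langle d_0+np\rangle$.

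Your fallback is a ``bounded degree filtration'' checked at one large $d$. It is never defined, and you never show it is compatible with $\varrho_{d+n}=\varrho_d\circ(\xi^{\m}_1)^{-1}$. A priori, the canonical-basis support of $\varrho_d^{-1}(x)$ could contain small representatives for every $d$.

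The missing ingredient, which the paper uses, is the triangularity of the shift. From $\xi^{\m}_p(M_{A,\Delta})=v^{-p\vartheta(\Delta)}M_{A+pI,\Delta}$ and the unitriangularity of the monomial and canonical bases one gets
\[
\xi^{\m}_p(\{A\}_\Delta)\in v^{-p\vartheta(\Delta)}\{A+pI\}_\Delta+\sum_{(B,\Delta_B)\sqsubset(A+pI,\Delta)}\mcal A\,\{B\}_{\Delta_B}.
\]
Only finitely many $(A',\Delta')\sqsubseteq(A,\Delta)$ exist, and the order is shift-compatible. So for $p\gg0$ every index on the right is a large representative. Then for $x=\varrho(y)$ with $y\in{}_{\mcal A}\mmuvgln$, writing $x=\varrho(\xi^{\m}_p(y))$ exhibits $x$ directly as an $\mcal A$-combination of the $b$'s. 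No dimension count is needed.

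\textbf{Integral form.} Spanning only needs ${}_{\mcal A}\mmuvsln\subseteq\varrho({}_{\mcal A}\mmuvgln)$, and this direction is easy. The elements $E_i^{(r)}1_\lambda$ and $F_i^{(r)}1_\lambda$ are standard basis elements, $L1_\lambda=v^{-\lambda_1}L_{D_\lambda,1}$, and ${}_{\mcal A}\mmuvgln$ is closed under multiplication.

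The reverse inclusion is what is needed for the $b_{\widehat{(A,\Delta)}}$ to lie in ${}_{\mcal A}\mmuvsln$ at all. As you say, it reduces to showing that $L_{D,h}$ with $h\ge2$ lies in the $\mcal A$-subalgebra generated by the $L1_\lambda$ and the divided powers. You leave this open, so your proposal does not establish the statement over $\mcal A$.

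The paper does not argue this either. It deduces ${}_{\mcal A}\mmuvsln=\varrho({}_{\mcal A}\mmuvgln)$ from the surjectivity of $\varrho$, which is only a statement over $\mbb Q(v)$. So you have found a real soft spot. But the induction you sketch is not routine. The obvious candidates are products of divided powers with $L1_{\lambda'}$ in the middle, where $\lambda'$ merges the first $h$ parts of $\lambda$. Up to a power of $v$, these give the characteristic function of $\{w\in V_h\}$ on all pairs of flags that agree from step $h$ on, not only on $f=f'$. That off-diagonal part must then be removed with coefficients in $\mcal A$.
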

\begin{proof}
    For the linear independence, we only need to check that $\mathbf{M\dot B}(\slln)\cap \mmuvsln\langle \bar{d} \rangle$ is linearly independent for each $\bar{d}\in \mbb Z/n\mbb Z$. 
    Thanks to Proposition \ref{im of cb-sln} and $\{\{_pA\}_{\Delta, d}\}$ forms a basis of mirabolic $q$-Schur algebra, the linear independence is clear.

    Next we show that this set spans the whole algebra.
    Since $\varrho$ is surjective, we have ${}_{\mcal A}\mmuvsln=\varrho({}_{\mcal A}\mmuvgln)$. 
    For any $x\in {}_{\mcal A}\mmuvsln$, we have
    \begin{align*}
         x=&\sum_{(A,\Delta)\in \widetilde{\Xi}}f_{A,\Delta}\varrho (\{A\}_{\Delta})\\
         =& \sum_{(A,\Delta)\in \widetilde{\Xi}}f_{A,\Delta}\varrho\xi^{\m}_{p} (\{A\}_{\Delta})\\
         =& \sum_{(A,\Delta)\in \widetilde{\Xi}}f_{A,\Delta}\varrho (v^{-p\vartheta(\Delta)}\{A+pI\}_{\Delta}+\sum_{(B, \Delta_B)\in \Xi_{n|n, d+np}}f_{B, \Delta_B}\{B\}_{\Delta_B})\\
         =& \sum_{(A,\Delta)\in \widetilde{\Xi}}f_{A,\Delta}(v^{\vartheta(\Delta)\lfloor \frac{|A|}{n}\rfloor}b_{\widehat{A,\Delta}}+\sum_{(B, \Delta_B)\in \Xi_{n|n, d+np}}f_{B, \Delta_B}v^{\vartheta(\Delta_b)\lfloor \frac{|B|}{n}\rfloor}b_{\widehat{B,\Delta_B}})
    \end{align*}
    Therefore, $x$ can be written as an $\mcal{A}$-linear combination of $\mathbf{M\dot B}(\slln)$. 
    Therefore, $\mathbf{M\dot B}(\slln)$ forms a basis of ${}_{\mcal A}\mmuvsln$ and moreover a basis of $\mmuvsln$.
\end{proof}

Recall the natural embedding
$$
\dot{\mathbf U}(\slln)\hookrightarrow\mmuvsln.
$$
Under this embedding and the construction of $\mbf{M\dot B}$, we have the following corollary.
\begin{cor}\label{comparsion of cb}
    The canonical basis $\mbf{\dot B}$ of the modified quantum group $\dot{\mathbf U}(\slln)$ 
    is identified with
    $$
    \left\{b_{\widehat{A,\emptyset}}\ \middle|\ \widehat{(A,\emptyset)}\in\widehat{\Xi} \right\}
    $$
    under the embedding.
\end{cor}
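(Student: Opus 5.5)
The plan is to reduce the statement to the known characterization of Lusztig's canonical basis $\mathbf{\dot B}$ of $\dot{\mathbf U}(\slln)$ via the $q$-Schur algebras, as in \cite{Mc12} (see also \cite{LW18,FL21}). Recall that $\mathbf{\dot B}$ is characterized by the following property. For every $d$, the homomorphism $\phi_d:\dot{\mathbf U}(\slln)\to \Sd$ sends each element of $\mathbf{\dot B}$ either to $0$ or to a canonical basis element $\{A\}$ of $\Sd$, up to the same normalizing power of $v$. Moreover, the element of $\mathbf{\dot B}$ attached to the class of $A$ is obtained by the same stabilization procedure $v^{-0}\varrho(\{A\})$ from the BLM basis of $\dot U_v(\glln)$. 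Note that $\vartheta(\emptyset)=0$, so no twist by powers of $v$ occurs when $\Delta=\emptyset$.

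First I will check that the embedding $\dot{\mathbf U}(\slln)\hookrightarrow\mmuvsln$ is compatible with all the structure maps. It should intertwine the maps $\phi_d$ with $\phi^{\m}_d$, and $\varrho$ on $\dot U_v(\glln)$ with $\varrho$ on $\mmuvgln$. It should also intertwine the restriction of $\Phi_d$ to $\dot U_v(\glln)$ with the map to $\Sd\subset\MS$. All of these maps are defined on the generators $E_i1_\lambda$, $F_i1_\lambda$, $1_\lambda$, and under the embedding $\Sd\hookrightarrow\MS$ (with $A\mapsto(A,\emptyset)$) these generators go to the corresponding generators. Hence the required commutative squares follow by checking them on generators.

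Next, for $(A,\emptyset)$, I will show that the element $\{A\}_\emptyset\in\mmuvgln$ of Proposition \ref{bas-gln} coincides with the image of the BLM basis element $\{A\}$ of $\dot U_v(\glln)$.
\begin{itemize}
\item The monomial $M_{A,\emptyset}$ is the BLM monomial (case (c) of Proposition \ref{mon-b}), so it lies in the image of $\dot U_v(\glln)$.
\item The bar involution on $\mmuvgln$ is a $\dot U_v(\glln)$-bimodule map that restricts to the BLM bar involution.
\item The BLM element $\{A\}$ is bar invariant and lies in $[A]+\sum v^{-1}\mathbb Z[v^{-1}][A']$ with $A'\sqsubset A$. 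Since the elements $[A']$ equal $[A']_\emptyset$ in $\mmuvgln$, it satisfies the defining conditions of $\{A\}_\emptyset$.
\end{itemize}
Uniqueness in Proposition \ref{bas-gln} then gives equality. Applying $\varrho$ shows that $b_{\widehat{(A,\emptyset)}}=\varrho(\{A\}_\emptyset)$ is exactly McGerty's realization of the canonical basis element of $\dot{\mathbf U}(\slln)$. Equivalently, Proposition \ref{im of cb-sln} gives $\phi^{\m}_d(b_{\widehat{(A,\emptyset)}})=\{A\}_{\emptyset,d}=\{A\}$, since the canonical basis of $\Sd$ is the IC basis of orbits with $\Delta=\emptyset$. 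This is precisely the characterization of $\mathbf{\dot B}$.

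Finally, I will prove that the two sets coincide.
\begin{itemize}
\item Every $b\in\mathbf{\dot B}$ has the form $\varrho(\{A\})$ for a unique class $\widehat{A}$, so it equals some $b_{\widehat{(A,\emptyset)}}$.
\item Conversely, every $b_{\widehat{(A,\emptyset)}}$ arises in this way.
\item Injectivity follows from the linear independence in Theorem \ref{cb-sln}.
\end{itemize}

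The main obstacle is the identification of $\{A\}_\emptyset$ with the BLM element. The lower terms in Proposition \ref{bas-gln} a priori range over decorated $(A',\Delta')\sqsubset(A,\emptyset)$, and $\Delta'\le\emptyset$ forces $\Delta'=\emptyset$. However, one must make sure the order $\sqsubset$ never places a decorated element below an undecorated one with a different matrix. If it does, I would argue instead at the finite level: in $\MS$, the IC sheaf of $\overline{\mathcal O_{A,\emptyset}}$ is supported on the closed subvariety $\{w=0\}\cong\X\times\X$. Hence $\{A\}_{\emptyset,d}$ involves only $[A']_\emptyset$, so it equals the canonical basis element of $\Sd$ inside $\MS$. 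Lemma \ref{cb-gln to schur} together with the injectivity of $\prod_d\Phi_d$ on each weight space then transfers this conclusion to $\mmuvgln$.
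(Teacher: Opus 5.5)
Your main line is correct. It is essentially the argument the paper intends; the paper gives no proof beyond ``by construction''. Since $\vartheta(\emptyset)=0$, there is no $v$-twist, and Proposition \ref{im of cb-sln} gives $\phi^{\m}_d(b_{\widehat{(A,\emptyset)}})=\{A\}_{\emptyset,d}$. This is the BLM canonical basis element of $\Sd$, because $\overline{\mathcal O_{A,\emptyset}}$ lies in the closed set $\{w=0\}$. The known compatibility of $\mathbf{\dot B}$ with the maps $\phi_d$ \cite{Mc12,LW18} then finishes the proof. Your detour through uniqueness in Proposition \ref{bas-gln} is an equivalent way to reach the same point.

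Two corrections, though.

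\emph{The ``main obstacle'' is not one.} The order $\sqsubset$ does place decorated elements $(A',\Delta')$ below $(A,\emptyset)$ whenever $A'\sqsubset A$. But uniqueness in Proposition \ref{bas-gln} only requires the embedded BLM element to be bar-invariant and to lie in $[A]_\emptyset+\sum_{(A',\Delta')\sqsubset(A,\emptyset)}v^{-1}\mathbb Z[v^{-1}][A']_{\Delta'}$. Its lower terms $[A']_\emptyset$ with $A'\sqsubset A$ are among the allowed ones, so your argument goes through as it stands.

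\emph{The fallback you would switch to is wrong.} The claim that $\prod_d\Phi_d$ is injective on weight spaces of $\mmuvgln$ is false, because every $\Phi_d$ kills $[A]_\Delta$ as soon as $A$ has a negative diagonal entry. For example, with $n=2$ and $A=\begin{pmatrix}-1&2\\2&-1\end{pmatrix}$, the element $[A]_\emptyset$ is a nonzero element of $1_{(1,1)}\mmuvgln 1_{(1,1)}$ that is killed by all $\Phi_d$. So finite-level information cannot be pulled back to $\mmuvgln$ this way. Simply drop that paragraph.
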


\section{Positivity for the Module and Comodule Structure}
For simplicity, in this section we will denote $\uvsln$ as $U$ and its modified form as $\dot{U}=\dot{U}_v(\slln)$. 
Similarly, we denote $\muvsln$ as $MU$ and its modified form as $M\dot{U}=\mmuvsln$.
\subsection{Positivity of the module action}
    As shown by Rosso in \cite{R18} the canonical bases $\{\{A\}_{\Delta}\mid (A,\Delta)\in \Xi_{n|n, d}\}$ do not necessarily satisfy a positivity condition. 
    More specifically, define projections
	$$p_i :
	\mathscr{X}_{n, d} \times 	\mathscr{X}_{n, d} \times 	\mathscr{X}_{n, d}
	\times \mathbb F_q^d \times \mathbb F_q^d
	\longrightarrow
		\mathscr{X}_{n, d}\times 	\mathscr{X}_{n, d} \times \mathbb F_q^d
	$$
	\begin{align*}
		p_{1}(f, f', f'', w, w') &= (f, f', w), \\
		p_{2}(f, f', f'', w, w') &= (f', f'', w'), \\
		p_3(f, f', f'', w, w') &= (f, f'', w+w').
	\end{align*}
    The convolution of $\MS$ can be interpreted as 
    $$
    h\ast h'=p_{3*}(p_1^*h\otimes p_2^*h'),
    $$
    where $p_i^{*}$ denotes pullback along the projection $p_i$ and $p_{i*}$ denotes the push-forward along $p_i$. 
    It is easy to see that the map $p_3$ is not proper and therefore the Decomposition Theorem cannot be applied to the mirabolic setting.
    
    However, if we restrict ourselves to the case that $w=0$ ($w'=0$ resp.), then $p_3$ is a proper map, 
    and the convolution actually endows $\MS$ with a $\mathcal{S}_{n, d}$-bimodule structure, i.e., 
    for any $(A,\emptyset)$ and $(B,\Delta_B)\in \Xi_{n|n,d}$ and
        $$
        \{A\}_{\emptyset}\ast \{B\}_{\Delta_B}=\sum_{(C, \Delta')\in \Xi_{n|n,d}}c_{(A,\emptyset), (B, \Delta_B)}^{(C, \Delta_C)}\{C\}_{\Delta_C},
        $$
        $$
       \{B\}_{\Delta_B}\ast \{A\}_{\emptyset} =\sum_{(C', \Delta')\in \Xi_{n|n,d}}c_{(B,\Delta_B), (A, \emptyset)}^{(C', \Delta')}\{C'\}_{\Delta_{C'}}.
        $$
    Then the Decomposition Theorem implies
    \begin{prop}\label{pos-cb-schur}
        The coefficients $c_{(A,\emptyset), (B, \Delta_B)}^{(C, \Delta_C)}$ and $c_{(B,\Delta_B), (A, \emptyset)}^{(C', \Delta_{C'})}\in \mathbb{N}[v, v^{-1}]$.
    \end{prop}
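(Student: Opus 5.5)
We will interpret the bimodule action geometrically and apply the Decomposition Theorem to a proper convolution map; we treat the left action, the right one being symmetric. Consider the variety
$$
\mathcal Y=\{(f,f'',f',w)\in \X\times\X\times\X\times V\}
$$
with the maps $q_1(f,f'',f',w)=(f,f'')$, $q_2(f,f'',f',w)=(f'',f',w)$ and $q_3(f,f'',f',w)=(f,f',w)$. By the formula for $h\ast g$ with $h\in\Sd$, the left action is $h\ast g=q_{3!}(q_1^*h\otimes q_2^*g)$, since the vector coordinate is not split here. The fibre of $q_3$ is a copy of the projective variety $\X$, so $q_3$ is proper. This is exactly why we avoid the map $p_3$ used for the full mirabolic convolution.

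First we reduce to a twisted product. Since $\mathcal O_A\subset\X\times\X$ and $\mathcal O_{B,\Delta_B}\subset \X\times\X\times V$ are $G$-stable, the complex $q_1^*IC_A\otimes q_2^*IC_{B,\Delta_B}$ is isomorphic to the twisted external product $IC_A\,\tilde\boxtimes\, IC_{B,\Delta_B}$ on the fibration $G\times^{P_{f''}}(\cdots)$. Here $P_{f''}$ is the stabilizer of the middle flag, and we work locally over $\X(\co(A))$ in the middle coordinate.

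Locally this fibration is a product of smooth base and fibre. Hence the twisted product is, up to shift, a simple perverse sheaf: it is the IC complex of the closure of $q_1^{-1}(\mathcal O_A)\cap q_2^{-1}(\mathcal O_{B,\Delta_B})$. It is also pure of weight zero after the normalizing shifts $[d(\cdot)-r(\cdot)]$ built into $[A]$ and $[B]_{\Delta_B}$. This is the standard argument from \cite{BLM90} and \cite{L00}, run with the extra vector coordinate carried along; the coordinate is harmless because $q_1$ ignores it.

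Next we apply the Decomposition Theorem to the proper map $q_3$. The pushforward $q_{3!}$ of this pure semisimple complex decomposes as
$$
\bigoplus_{(C,\Delta_C)}\bigoplus_j IC_{C,\Delta_C}[j]^{\oplus m_{C,j}},
$$
a direct sum of shifted $G$-equivariant IC sheaves. Each of these is supported on an orbit closure $\overline{\mathcal O_{C,\Delta_C}}$, by the $G$-equivariance of $q_3$ and the orbit parametrization of \cite{MWZ}. The multiplicities $m_{C,j}$ lie in $\mathbb N$.

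We then pass to functions by the sheaf–function dictionary with $v^2=q$, using the same Frobenius trace conventions that define $\{A\}_\Delta$. The function of $IC_{C,\Delta_C}[j]$ is $(-v)^{?}$ times $\{C\}_{\Delta_C}$, so the coefficient $c^{(C,\Delta_C)}_{(A,\emptyset),(B,\Delta_B)}$ equals $\sum_j m_{C,j}v^{\pm j}$ up to sign.

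The main obstacle is the bookkeeping of signs and shifts. We must check two things. First, the parity vanishing of the stalks of the IC sheaves, established by the pointwise purity of the orbit stratification, removes the signs $(-1)^j$. Second, the normalizations $v^{-d+r}$ in $[A]$, $[B]_{\Delta_B}$ and $[C]_{\Delta_C}$ match the relative dimension of $q_3$. For this we use that $r(C,\Delta_C)=r(A)$ whenever $\mathcal O_{C,\Delta_C}$ lies in the image.

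To handle the signs, we will deduce that only even shifts occur, or that the Frobenius eigenvalues are $q^{i/2}$, by the same reasoning as for $\Sd$. There the fibres of $q_3$ over each orbit admit an affine paving relative to the pattern of $w$; failing that, we fall back on the purity argument of \cite[Lemma 3.6]{R18}. Given this, the coefficients lie in $\mathbb N[v,v^{-1}]$.
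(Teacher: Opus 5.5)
Your argument is essentially the paper's: the paper also notes that when the vector coordinate is not split (the $w=0$, resp.\ $w'=0$, restriction of the convolution diagram), the map $p_3$ has projective fibres $\X$ and is therefore proper, and it then simply invokes the Decomposition Theorem. The paper does not address the sign and Frobenius-eigenvalue bookkeeping you single out at all, so your extra care there goes beyond its argument. If you want that step to be complete, you would still need to verify the affine-paving claim and the specific lemma you attribute to Rosso.
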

    
    By the construction of the canonical basis, we have the following identity:
    \begin{align}\label{bar}
    \overline{[A]_{\Delta}}=\sum_{(A', \Delta')\preceq (A, \Delta)}\gamma_{(A', \Delta'), (A, \Delta)}[A']_{\Delta'}
    \end{align}
    where $\gamma_{(A', \Delta'), (A, \Delta)}\in \mathcal{A}$ and $\gamma_{(A, \Delta), (A, \Delta)}=1$.

     
    Consider the $\dot{U}_{v}(\slln)$-module structure of $\mmuvsln$, we have the following positivity result for $\mathbf{M\dot{B}}$. 
    
    \begin{thm}\label{p-module}
        For any $b\in \mathbf{\dot{B}}$ and $b_{\widehat{(A,\Delta)}}\in \mathbf{M\dot{B}}$, we write
        $$
        b\cdot b_{\widehat{(A,\Delta)}}=\sum_{\widehat{(A',\Delta')}\in \widehat{\Xi}}P^{\widehat{(A',\Delta')}}_{b, \widehat{(A,\Delta)}}b_{\widehat{(A',\Delta')}}.
        $$
        where $P^{\widehat{(A',\Delta')}}_{b, \widehat{(A,\Delta)}}=0$ for all but finitely many $\widehat{(A',\Delta')}$.
        Then we have $P^{\widehat{(A',\Delta')}}_{b, \widehat{(A,\Delta)}}\in \mathbb{N}[v,v^{-1}]$.
    \end{thm}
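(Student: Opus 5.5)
The plan is to reduce the statement to the positivity of the $\Sd$-bimodule action on $\MS$ (Proposition \ref{pos-cb-schur}) by pushing everything through the surjections $\phi^{\m}_d$. Both sides of the identity decompose into blocks $\mmuvsln\langle\bar d\rangle$. Since $b$ and $b_{\widehat{(A,\Delta)}}$ carry idempotents, we may fix $\bar d$ and choose representatives $b=b_{\widehat{(B,\emptyset)}}$, using Corollary \ref{comparsion of cb}, and $(A,\Delta)$ with $|A|=|B|=d=d_0+np$. Here $p$ is taken large enough that Proposition \ref{im of cb-sln} applies to $b$, to $b_{\widehat{(A,\Delta)}}$, and to each of the finitely many $\widehat{(A',\Delta')}$ with $P^{\widehat{(A',\Delta')}}_{b,\widehat{(A,\Delta)}}\neq 0$.

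Apply the algebra homomorphism $\phi^{\m}_d$ to the expansion. By Proposition \ref{im of cb-sln}, the left side becomes
\[
v^{-\vartheta(\Delta)\lfloor d/n\rfloor}\{B\}_{\emptyset,d}\ast\{A\}_{\Delta,d},
\]
because $\vartheta(\emptyset)=0$. The right side becomes
\[
\sum P^{\widehat{(A',\Delta')}}_{b,\widehat{(A,\Delta)}}\,v^{-\vartheta(\Delta')\lfloor d/n\rfloor}\{A'\}_{\Delta',d},
\]
where $(A',\Delta')$ is the representative of size $d$. One point needs checking: distinct classes in $\widehat\Xi$ give distinct elements of $\Xi_{n|n,d}$, since at most one representative has size $d$. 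So the images $\{A'\}_{\Delta',d}$ are distinct canonical basis elements and therefore linearly independent. Comparing coefficients with Proposition \ref{pos-cb-schur} gives
\[
P^{\widehat{(A',\Delta')}}_{b,\widehat{(A,\Delta)}}=v^{(\vartheta(\Delta')-\vartheta(\Delta))\lfloor d/n\rfloor}\,c^{(A',\Delta')}_{(B,\emptyset),(A,\Delta)}.
\]
The right-hand side lies in $\mathbb N[v,v^{-1}]$, since multiplying by a power of $v$ preserves positivity.

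The step needing the most care is the choice of $p$. It must handle every term at once, and each term must survive the projection. Finiteness of the sum lets a single $p$ work for all terms, provided no nonzero coefficient is killed by $\phi^{\m}_d$. This holds once all diagonal entries of the representatives are positive, which is again true for $p\gg0$.

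A second subtlety is the normalization. The factor $v^{-\vartheta\lfloor d/n\rfloor}$ depends on $\vartheta$, and $\vartheta(\Delta')$ may differ from $\vartheta(\Delta)$. This only introduces a monomial factor, so positivity is unaffected.

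A possible gap is whether the module action of $\dot U$ on $M\dot U$ matches left multiplication inside $\mmuvsln$ under the embedding. I would take this as given by the definition and by the fact that $\phi^{\m}_d$ restricted to $\dot U$ is the usual map to $\Sd$, which sends the canonical basis to the canonical basis (cf. \cite{Mc12}).
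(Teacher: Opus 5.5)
Your proposal is correct and follows the paper's proof essentially verbatim: you identify $b$ with some $b_{\widehat{(B,\emptyset)}}$ via Corollary \ref{comparsion of cb}, push the expansion through the algebra homomorphism $\phi^{\m}_d$ for a single large $d$ using Proposition \ref{im of cb-sln}, compare coefficients in the canonical basis of $\MS$, and conclude from Proposition \ref{pos-cb-schur} that each $P^{\widehat{(A',\Delta')}}_{b,\widehat{(A,\Delta)}}$ is a power of $v$ times a coefficient in $\mathbb N[v,v^{-1}]$. Your exponent $(\vartheta(\Delta')-\vartheta(\Delta))\lfloor d/n\rfloor$ is the one given by Proposition \ref{im of cb-sln} as stated; the paper writes $p$ there instead, which agrees when $0\le d_0<n$, and in either case the difference is only a power of $v$.
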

    \begin{proof}
        Let $\Omega$ be the finite set of $\widehat{(A',\Delta')}\in \widehat{\Xi}$ with $P^{\widehat{(A',\Delta')}}_{b, \widehat{(A,\Delta)}}\neq 0$. 
        By Corollary \ref{comparsion of cb}, we may find $b_{\widehat{A'',\emptyset}}\in \mbf{M\dot B}$ such that $b=b_{\widehat{A'',\emptyset}}$.
        Choose the representatives $(A,\Delta), (A',\Delta'), (A'', \emptyset)$ such that $|A|=|A'|=|A''|=d_0$. 
        By Proposition \ref{im of cb-sln}, we can find some sufficiently large integer $p$ and $d=d_0+np$ such that
        $$
        \phi^{\m}_d(b_{\widehat{A'',\emptyset}})=\{{}_{p}A''\}_{\emptyset, d}, 
        \quad \phi^{\m}_d(b_{\widehat{(A,\Delta)}})=v^{-p\vartheta(\Delta)}\{{}_{p}A\}_{\Delta, d}, 
        \quad \phi^{\m}_d(b_{\widehat{(A',\Delta')}})=v^{-p\vartheta(\Delta')}\{_{p}A'\}_{\Delta', d}.
        $$
        Then we have
        \begin{align*}
        \{{}_{p}A''\}_{\emptyset, d}\ast v^{-p\vartheta(\Delta)}\{{}_{p}A\}_{\Delta, d} &=\phi^{\m}_d(b)\ast \phi^{\m}_d(b_{\widehat{A,\Delta}})\\
        &=\phi^{\m}_d(b\cdot b_{\widehat{A,\Delta}})
            =\sum_{\widehat{(A',\Delta')}\in \widehat{\Xi}}P^{\widehat{(A',\Delta')}}_{b, \widehat{(A,\Delta)}} \phi^{\m}_d(b_{\widehat{(A',\Delta')}})\\
             &= \sum_{(A',\Delta')\in \Xi_{n|n, d}}v^{-p\vartheta(\Delta')}P^{(A',\Delta')}_{b, (A,\Delta)} \{_{p}A'\}_{\Delta', d}.
        \end{align*}
        So 
        $$P^{\widehat{(A',\Delta')}}_{b, \widehat{(A,\Delta)}}=v^{p(\vartheta(\Delta')-\vartheta(\Delta))}c^{({}_pA',\Delta')}_{({}_pA'',\emptyset), ({}_pA,\Delta)}.$$
        By Proposition \ref{pos-cb-schur}, we have $P^{\widehat{(A',\Delta')}}_{b, \widehat{(A,\Delta)}}\in \mbb{N}[v, v^{-1}]$ as desired.
    \end{proof}
    \subsection{Positivity of the comodule action}
    
     Fix a composition $\mathbf{b}\in \Lambda_{n, d}$ and a flag $\tilde{f}\in \mathscr{X}_{n, d}(\mathbf{b})$. 
     Let $P_{\mbf{b}}=\mathrm{Stab}_{G_d}(\tilde f)$. For any $\mathbf{a}\in \Lambda_{n, d}$, we can define the $P_{\mbf{b}}$-action on $\mathscr{X}_{n, d}(\mathbf{a})\times V$ via the inclusion $P_\mathbf{b}\hookrightarrow G_{d}$. Define an embedding
     \begin{align*}
         i_{\fb, \fa, w}:\X(\fa)\times V&\to \X(\fb)\times \X(\fa)\times V\\
         (f, w)&\mapsto (\tilde{f}, f, w)
     \end{align*}
     This map identifies the $P_\fb$-orbits on $\X(\fa)\times V$ with the $G_d$-orbits on the image of $i_{\fb, \fa, w}$. 
     Therefore, the pullback of $\ii$ 
    $$
    \ii^*:\mcal{A}_{G_d}(\X(\fb)\times \X(\fa)\times V)\to \mcal{A}_{P_{\fb}}(\X(\fa)\times V)
    $$
    is an isomorphism of $\mcal{A}$-module.

    Now we fix the triple $(f, f', f'')\in \X(\fb)\times \Xp(\fb')\times \Xpp(\fb'')$ 
    such that $\fb=\fb'+\fb''$, $f'=\pi'(f)$ and $ f''=\pi''(f)$. 
    Set $P_{\fb'}=\mathrm{Stab}_{G_{d'}}(f')$ and $P_{\fb''}=\mathrm{Stab}_{G_{d''}}(f'')$, we have similar isomorphisms
    \begin{align}
        \ip^*:\mcal{A}_{G_d'}(\Xp(\fb')\times \Xp(\fa')\times V')&\to \mcal{A}_{P_{\fb'}}(\Xp(\fa')\times V')\\
        \ipp^*:\mcal{A}_{G_d''}(\Xpp(\fb'')\times \Xpp(\fa''))&\to \mcal{A}_{P_{\fb''}}(\Xpp(\fa''))
    \end{align}

    Define 
    $$X^{+}_{\fa, \fa', \fa'',w}=\{(f, w)\in \X(\fa)\times V\ |\ \pi'(f)\in \Xp(\fa'), \pi''(f)\in \Xpp(\fa''),\pi'(w)\in V'\}$$
    Then we have 
    $$
    \X(\fa)\times V\overset{\iota}{\longleftarrow} X^{+}_{\fa, \fa', \fa'',w}\overset{\pi}{\longrightarrow} \Xpp(\fa'')\times \Xp(\fa')\times V'
    $$
    where $\pi(f, w)=(\pi''(f), \pi'(f), \pi'(w))$ and $\iota$ is the natural inclusion. Therefore, we have the linear map
    $$
    \pi_{!}\iota^*:\mcal{A}_{P_\fb}(\X(\fa)\times V)\to \mcal{A}_{P_{\fb''}\times P_{\fb'}}(\Xpp(\fa'')\times \Xp(\fa')\times V')
    $$
    where $\pi_!$ is defined, for all $(\tilde{f}'', \tilde{f}', w')$, 
    by $$\pi_!(h)(\tilde{f}'', \tilde{f}', w')=\sum_{x\in X^{+}_{\fa, \fa', \fa'',w}, \pi(x)=(\tilde{f}'', \tilde{f}', w')}h(x).$$
    We also have
    $$\mcal{A}_{P_{\fb''}\times P_{\fb'}}(\Xpp(\fa'')\times \Xp(\fa')\times V')\cong \mcal{A}_{P_{\fb''}}(\Xpp(\fa''))\otimes \mcal{A}_{P_{\fb'}}(\Xp(\fa')\times V').$$
    
\begin{lem}\label{hyper}
    We have the commutative diagram
    
    \begin{tikzcd}
    {\mcal{A}_{G_{d}}(\X(\fb)\times \X(\fa)\times V)} \arrow[rrr, "\ii^*"] \arrow[dd, "\tilde{\rho}"] &  &  & {\mcal{A}_{P_{\fb}}(\X(\fa)\times V)} \arrow[dd, "\pi_!\iota^*"] \\
                                     &  &  &                               \\
    {\substack{\mcal{A}_{G_{d''}}(\Xpp(\fb'')\times \Xpp(\fa''))\\
     \otimes\mcal{A}_{G_{d'}}(\Xp(\fb')\times \Xp(\fa')\times V')
    }}  \arrow[rrr, "\ipp^*\otimes \ip^*"]                &  &  & {\mcal{A}_{P_{\fb''}}(\Xpp(\fa''))\otimes \mcal{A}_{P_{\fb'}}(\Xp(\fa')\times V')}         
    \end{tikzcd}
\end{lem}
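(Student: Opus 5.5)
The plan is to check commutativity pointwise on functions, since every map in the square is defined by summing over finite sets. Take $h\in\mathcal{A}_{G_d}(\X(\fb)\times\X(\fa)\times V)$ and a point $(\tilde f'',\tilde f',w')\in\Xpp(\fa'')\times\Xp(\fa')\times V'$. Evaluate both composites at this point and show that they are given by the same finite sum.

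For the path through the top right corner, first note that $\ii^*h(f_0,w)=h(\tilde f,f_0,w)$, where $\tilde f$ is the fixed flag in $\X(\fb)$. Then
\[
\pi_!\iota^*\ii^*h(\tilde f'',\tilde f',w')=\sum_{\substack{(f_0,w)\in X^{+}_{\fa,\fa',\fa'',w}\\ \pi''(f_0)=\tilde f'',\ \pi'(f_0)=\tilde f',\ \pi'(w)=w'}} h(\tilde f,f_0,w).
\]
The index set here is exactly $Z_{\tilde f',\tilde f'',w'}\cap(\X(\fa)\times V)$. For the path through the bottom left corner, $(\ipp^*\otimes\ip^*)\tilde\rho(h)$ evaluated at the same point is $\tilde\rho(h)(\pi'(\tilde f),\tilde f',\pi''(\tilde f),\tilde f'',w')$. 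This uses the fixed triple, with $f'=\pi'(\tilde f)$ and $f''=\pi''(\tilde f)$ playing the role of the base points $\hat f',\hat f''$.

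By the definition of $\tilde\rho$, this value is $\sum_{(f_0,w)\in Z_{\tilde f',\tilde f'',w'}}h(\hat f,f_0,w)$ for any $\hat f\in Z_{f',f''}$. We are free to choose $\hat f=\tilde f$, because $\tilde f\in Z_{\pi'(\tilde f),\pi''(\tilde f)}$ trivially and $\tilde\rho$ is independent of this choice (the argument following \cite{L00}). Since $h$ is supported on $\X(\fb)\times\X(\fa)\times V$, only terms with $f_0\in\X(\fa)$ survive. The two sums therefore coincide term by term.

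The main care is bookkeeping rather than a real obstacle, and three points need checking. First, we must match the identification $\mcal{A}_{P_{\fb''}\times P_{\fb'}}(\cdots)\cong\mcal{A}_{P_{\fb''}}(\cdots)\otimes\mcal{A}_{P_{\fb'}}(\cdots)$ with the tensor factor ordering $\Spp\otimes\MSp$ used in $\tilde\rho$; this amounts to evaluating pure tensors at product points. Second, we must confirm that the pulled-back function on the right is $P_{\fb''}\times P_{\fb'}$-invariant. This holds because $P_{\fb'}\times P_{\fb''}$ embeds in $P_\fb$ compatibly with $\pi',\pi''$ once the splitting $V=V'\oplus V''$ is adapted to $\tilde f$. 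Third, we must confirm that the condition $\pi'(w)\in V'$ in $X^{+}$ is vacuous, so that the fibres of $\pi$ are exactly the sets $Z_{\tilde f',\tilde f'',w'}$.
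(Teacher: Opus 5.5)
Your proposal is correct and is essentially the paper's proof. Both arguments evaluate the two composites at a point $(\tilde f'',\tilde f',w')$, identify the fibre of $\pi$ with $Z_{\tilde f',\tilde f'',w'}$, and take the fixed flag in $\X(\fb)$ as the base point $\hat f$ in the definition of $\tilde\rho$; the paper does this implicitly through the fixed triple with $f'=\pi'(f)$ and $f''=\pi''(f)$, and your extra bookkeeping remarks are consistent with its one-line computation.
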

    \begin{proof}
        For any $h\in \mcal{A}_{G_{d}}(\X(\fb)\times \X(\fa)\times V)$ and $(\tilde{f}'', \tilde{f}', v')\in \Xpp(\fa'')\times \Xp(\fa')\times V'$, we have
        \begin{align*}
            \pi_!\iota^*\ii^*h(\tilde{f}'', \tilde{f}', w')
            =& \sum_{(\tilde{f}, w)\in Z_{\tilde{f}', \tilde{f}'', w'}}\ii^*h(\tilde{f}, w)=\sum_{(\tilde{f}, w)\in Z_{\tilde{f}', \tilde{f}'', w'}}h(f, \tilde{f}, w)\\
            = &\tilde{\rho}_{\fb',\fa', \fb'', \fa'', w}(h)(f'', \tilde{f}'', f', \tilde{f}', w')\\
            = &(\ipp^*\otimes \ip^*)\circ\tilde{\rho}_{\fb',\fa', \fb'', \fa'', w}(h)(\tilde{f}'', \tilde{f}', w')
        \end{align*}
    \end{proof}

    Recall the canonical basis $\{\{A\}_d\ |\ A\in \Theta_d\}$ of $\mcal S_{n, d}$ from \cite{BLM90} and 
    the canonical basis $\{\{A\}_{\Delta,d} |\ (A, \Delta)\in \Xi_{n|n, d}\}$ of $\mcal{MS}_{n, d}$. 
    We write
    $$
    \rho_{\fb', \fa', \fb'', \fa''}(\{A\}_{\Delta,d})=\sum_{(A',\Delta')\in \Xi_{n|n, d'}, A''\in \Theta_{d''}}c^{ A'',(A',\Delta')}_{A,\Delta}\{A''\}_{d''}\otimes \{A'\}_{\Delta', d'},\quad c^{ A'',(A',\Delta')}_{A,\Delta}\in \mcal A
    $$

    \begin{prop}\label{pos-cmod}
     The coefficient $c^{ A'',(A',\Delta')}_{A,\Delta}\in \mathbb{N}[v, v^{-1}]$.   
    \end{prop}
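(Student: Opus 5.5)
We follow the strategy of \cite{FL21} for the ordinary comultiplication, adapted via Lemma \ref{hyper}. Since $\ii^*$, $\ipp^*$ and $\ip^*$ are isomorphisms of $\mcal A$-modules compatible with the sheaf-function dictionary, they send the canonical basis elements $\{A\}_{\Delta,d}$, $\{A''\}_{d''}$, $\{A'\}_{\Delta',d'}$ to the functions attached (up to the shift by $\dim$ of the $G$-orbit of $\tilde f$, i.e.\ $r(A,\Delta)$) to the $P$-equivariant intersection cohomology complexes of the corresponding $P_{\fb}$-orbit closures in $\X(\fa)\times V$, respectively of the $P_{\fb''}\times P_{\fb'}$-orbit closures in $\Xpp(\fa'')\times\Xp(\fa')\times V'$; indeed $\ii$ is, up to a smooth fibration with fibre $G_d/P_{\fb}$, an equivalence between the equivariant categories. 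Hence by Lemma \ref{hyper} it suffices to show that $\pi_!\iota^*$ applied to the IC function of a $P_{\fb}$-orbit closure in $\X(\fa)\times V$ is an $\mathbb N[v,v^{-1}]$-combination of IC functions on the target, after accounting for the normalising power $v^{\sum b_i'b_j''-a_i'a_j''}$ and the shifts $r(\cdot)$, which only contribute monomials $v^k$ and so do not affect positivity.

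The key geometric input is to realise $\pi_!\iota^*$ as a hyperbolic localization. Choose the one-parameter subgroup $\lambda:\mathbb G_m\to G_d$ acting by $t$ on $\Fqdp$ and by $1$ on $\Fqdpp$ (for the chosen splitting $\Fqd=\Fqdp\oplus\Fqdpp$); it lies in $P_{\fb}$ because $\tilde f=f$ was chosen compatible with the splitting (i.e.\ $V_i=V_i'\oplus V_i''$). Its fixed locus in $\X(\fa)\times V$ is the union over $\fa=\fa'+\fa''$ of $\Xp(\fa')\times\Xpp(\fa'')\times V'\times V''$, and the attracting set (points with $\lim_{t\to 0}$ existing, using the correct sign of $\lambda$) is exactly $X^+$, with the limit map sending $(f,w)$ to $(\pi''(f),\pi'(f),\pi'(w), w'')$. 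I will check that the $V''$-component of the limit is not remembered by $\pi$: the fixed component carries an extra factor $V''$, and $\pi_!$ integrates over it; I expect one must either choose $\lambda$ acting with weight on $V''$ so that the $V''$-coordinate of $w$ is contracted to $0$, or observe that summing over $w''$ is a pushforward along a vector-bundle projection which preserves positivity (it maps IC sheaves of $\mathbb G_m$-stable cones to shifted IC sheaves, or at least to semisimple complexes by the decomposition theorem since one may compactify). I will first try $\lambda$ acting by weight $t^{-1}$ on $\Fqdpp$ relative to $\Fqdp$ so the repelling/attracting sets match $X^+$ and the vector component in $V''$ is forced.

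Then Braden's theorem \cite{B03} applied to the $\mathbb G_m$-equivariant (hence weakly equivariant) pure complex $IC$ on $\X(\fa)\times V$ says the hyperbolic restriction $(\pi)_!\iota^*$ of a pure weight-zero perverse sheaf is pure of weight zero, hence (being $P_{\fb''}\times P_{\fb'}$-equivariant, with the Levi of $P_{\fb}$ acting) a direct sum of shifted IC complexes of orbit closures with Frobenius acting by appropriate Tate twists. Taking traces of Frobenius yields a combination with coefficients in $\mathbb N[v,v^{-1}]$. The main obstacle is the precise identification of $X^+$ and $\pi$ with Braden's attracting diagram, including the vector component and properness/normality hypotheses, and verifying that the orbits in the target are finite in number so IC complexes there correspond to the bases $\{A''\}_{d''}\otimes\{A'\}_{\Delta',d'}$; once that is settled, positivity follows from purity.
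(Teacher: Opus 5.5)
Your strategy is the paper's: use Lemma \ref{hyper} to reduce to $\pi_!\iota^*$, identify that map with Braden's hyperbolic localization for a $\mathbb{G}_m$ splitting $V=\Fqdp\oplus\Fqdpp$, and conclude by purity. However, the cocharacter you actually write down does not realize $\pi_!\iota^*$, and this identification is the whole content of the proof.

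With $t$ acting on $\Fqdp$ and trivially on $\Fqdpp$:
\begin{itemize}
\item the fixed vectors form $V''$, not $V'\times V''$;
\item $\lim_{t\to 0}t\cdot w$ is the $V''$-component of $w$, not $\pi'(w)$;
\item $\lim_{t\to 0}t\cdot W=(W\cap V')\oplus\mathrm{pr}_{V''}(W)$, so the roles of $\pi'$ and $\pi''$ are swapped.
\end{itemize}
Taking $t\to\infty$ instead repairs the flag limits, but then only $w\in V''$ have a limit. The fixed locus you state, with a full factor $V'\times V''=V$, cannot arise from any cocharacter that also splits the flags: fixing every vector of $V$ forces the cocharacter to act trivially.

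The correct choice, which is the paper's, is $t\mapsto(1_{V'},t\,1_{V''})$ with $t\to 0$; equivalently, your ``first try'' read in the repelling direction. For this cocharacter:
\begin{itemize}
\item the fixed locus is $\bigsqcup_{\fa'+\fa''=\fa}\Xpp(\fa'')\times\Xp(\fa')\times V'$;
\item every point of $\X(\fa)\times V$ is attracted, so the attracting piece over the $(\fa',\fa'')$ component is exactly $X^{+}_{\fa,\fa',\fa'',w}$ (the condition on $w$ is vacuous);
\item the limit map is exactly $\pi$.
\end{itemize}
Hence there is no leftover $V''$-factor to integrate out. Your fallback of pushing forward along $V''$ and invoking the decomposition theorem after compactifying is therefore unnecessary. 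As stated it is also not valid: $\pi_!$ along a non-proper projection need not preserve purity unless you use the conic $\mathbb{G}_m$-structure, which is again the contraction built into Braden's theorem.

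With this correction your argument coincides with the paper's. Two of your remarks are points the paper leaves implicit and are worth keeping. First, $\mathbb{G}_m$ must lie in $P_{\fb}$, which requires choosing $\tilde f$ compatible with the splitting; this is permissible because $\tilde\rho$ is independent of that choice. Second, the output of the localization is equivariant for $P_{\fb''}\times P_{\fb'}$, so it decomposes into IC complexes of the orbit closures indexing $\{A''\}_{d''}\otimes\{A'\}_{\Delta',d'}$.
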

    \begin{proof}
        It is sufficient to show the proposition for $\tilde{\rho}_{\fb', \fa', \fb'', \fa''}$. 
        By Lemma \ref{hyper} and the definition of canonical basis, we only need to show that the linear map $\pi_!\iota^*$ is the function version of the hyperbolic localization functor introduced in \cite{B03}.
        
        To do this, we now consider the algebraic closure $\overline{\mathbb F}_{q}$ of $\mathbb F_{q}$. 
        Let $\bar{G}_d$ be the general linear group over $\overline{\mathbb F}_{q}$ whose $\mathbb F_{q}$-points are $G_d$.
        Similarly, we define the variety $\overline{\mathscr{X}}_{n, d}(\fa)$ over $\overline{\mathbb F}_{q}$ for $\X(\fa)$. 
        Take $\bar{G}_m=\overline{\mathbb F}_{q}^{*}$.
        
        For $d=d'+d''$, we fix an isomorphism $\overline{\mathbb F}_{q}^d\cong \overline{\mathbb F}_{q}^{d'}\oplus \overline{\mathbb F}_{q}^{d''}$. 
        Then we can fix an embedding 
        $$
        \bar{G}_m\to \bar{G}_d,\, t\mapsto (1_{\overline{\mathbb F}_{q}^{d'}}, t1_{\overline{\mathbb F}_{q}^{d''}}).
        $$
        We define the action of $\bar{G}_m$ on $\overline{\mathscr{X}}_{n, d}(\fa)$ and $\overline{\mathbb F}_{q}^d$ by this embedding
        and these actions induce the diagonal action of $\bar{G}_m$ on $\overline{\mathscr{X}}_{n, d}(\fa)\times \overline{\mathbb F}_{q}^d$. 
        By \cite[Proposition 1.5.3]{FL21}, we know the set of fixed points of $\overline{\mathscr{X}}_{n, d}(\fa)$ under this action is
        $$
        \bigsqcup_{\fa'+\fa''=\fa}\overline{\mathscr{X}}_{n, d''}(\fa'')\times \overline{\mathscr{X}}_{n, d'}(\fa').
        $$
        So we can obtain fixed-points set of $\overline{\mathscr{X}}_{n, d}(\fa)\times \overline{\mathbb F}_{q}^{d'}$ is 
        $$
        \bigsqcup_{\fa'+\fa''=\fa}\overline{\mathscr{X}}_{n, d''}(\fa'')\times \overline{\mathscr{X}}_{n, d'}(\fa')\times \overline{\mathbb F}_{q}^{d'}.
        $$
        For any $v\in \overline{\mathbb F}_{q}^{d}$, we can see $\lim_{t\to 0}t\cdot v\in \overline{\mathbb F}_{q}^{d'}$, then by \cite[Proposition 1.5.3]{FL21}, 
        the attracting set of $\overline{\mathscr{X}}_{n, d}(\fa)\times \overline{\mathbb F}_{q}^{d}$ is the algebraic variety whose $\mathbb{F}_q$-points are $X^{+}_{\fa, \fa', \fa'',v}$.

        Therefore, the linear map $\pi_!\iota^*$ is the function version of the hyperbolic localization functor attached to the data $(\overline{\mathscr{X}}_{n, d}(\fa)\times \overline{\mathbb F}_{q}^{d}, \bar{G}_m)$. By \cite{B03}, this functor sends a simple perverse sheaf to a semisimple complex. Recall that $\ii^*(\{A\}_{\Delta,d})$ is the function version of simple perverse sheaf attached to the $P_{\fb}$-orbit in $\overline{\mathscr{X}}_{n, d}(\fa)\times \overline{\mathbb F}_{q}^{d}$ indexed by $(A,\Delta)$. 
        Hence the hyperbolic localization of the IC sheaf corresponding to $(A,\Delta)$ is a semisimple complex. By the sheaf-function
        correspondence, its trace function is a linear combination of canonical basis elements with coefficients in $\mathbb N[v,v^{-1}]$. 
        Using Lemma~\ref{hyper}, this trace function is exactly the image of $\{A\}_{\Delta,d}$ under $\tilde\rho_{\fb',\fa',\fb'',\fa''}$.
        Thus the proposition follows.
    \end{proof}
    Due to the definition of transfer map $ \phi^\m_{d+n, d}$, we may consider the positivity of it, i.e., we have the following result.

    \begin{prop}\label{pos-trans}
    The transfer map $\phi^\m_{d+n, d}: \mathcal{MS}_{n, d+n}\to \MS$ sends each canonical basis to a linear combination of canonical basis elements with coefficients in $\mbb N[v, v^{-1}]$ or zero.
    \end{prop}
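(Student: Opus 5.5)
The plan is to factor $\phi^\m_{d+n,d}$ as $(\chi\otimes\mathrm{id})\circ\rho$ with $\rho:\mathcal{MS}_{n,d+n}\to\mathcal S_{n,n}\otimes\MS$, and check positivity for each factor. By Proposition \ref{pos-cmod}, applied with $d''=n$ and $d'=d$, every component satisfies
\[
\rho_{\fb',\fa',\fb'',\fa''}(\{A\}_{\Delta,d+n})=\sum c^{A'',(A',\Delta')}_{A,\Delta}\{A''\}_{n}\otimes\{A'\}_{\Delta',d}
\]
with $c^{A'',(A',\Delta')}_{A,\Delta}\in\mathbb N[v,v^{-1}]$. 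The normalization $v^{\sum b_i'b_j''-a_i'a_j''}$ relating $\rho$ to $\tilde\rho$ is a monomial, so it does not affect positivity. Summing over the finitely many components that can be nonzero for fixed $(A,\Delta)$, we get that $\rho(\{A\}_{\Delta,d+n})$ is an $\mathbb N[v,v^{-1}]$-combination of the elements $\{A''\}_n\otimes\{A'\}_{\Delta',d}$.

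Next we apply $\chi\otimes\mathrm{id}$. It then suffices to show that $\chi(\{A''\}_n)\in\mathbb N[v,v^{-1}]$ for every canonical basis element $\{A''\}_n$ of $\mathcal S_{n,n}$. We recall from \cite{FL21} that $\chi$ sends $e_{A''}$ to $\det(A'')$, which we read as the sign of the permutation when $A''$ is a permutation matrix and $0$ otherwise. The approach is to follow \cite[Lemma 1.4.1]{FL21} and \cite{Mc12}, where the same transfer map for $\Sd$ is shown to send canonical basis elements to $\mathbb N[v,v^{-1}]$-multiples of canonical basis elements or to $0$. The key fact there is that $\chi(\{A''\}_n)$ is either $0$ or a power of $v$. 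Concretely, $\chi$ is nonzero only on the block $\mathcal S_{n,n}((1,\dots,1),(1,\dots,1))$, which is the Hecke algebra of $S_n$, and on that block it is the sign-type character. This character sends the Kazhdan--Lusztig basis element $C'_w$ to $0$ unless $w=e$, and $\{I\}_n=1$. For $\{A''\}_n$ outside this block, $\chi$ gives $0$ by weight considerations. Either way $\chi(\{A''\}_n)\in\{0\}\cup\{v^k\}$, which is certainly in $\mathbb N[v,v^{-1}]$.

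Combining the two steps gives
\[
\phi^\m_{d+n,d}(\{A\}_{\Delta,d+n})=\sum c^{A'',(A',\Delta')}_{A,\Delta}\,\chi(\{A''\}_n)\,\{A'\}_{\Delta',d},
\]
and every coefficient in this sum lies in $\mathbb N[v,v^{-1}]$. The result can be zero, for instance when $(A-I,\Delta)\notin\Xi_{n|n,d}$, and this is consistent with Corollary \ref{t-map f bas}.

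The main obstacle is the second step: making precise that $\chi$ takes nonnegative values, and not merely integral ones, on the canonical basis of $\mathcal S_{n,n}$. I would first try to cite \cite[Lemma 1.4.1]{FL21} directly. Failing that, I would verify it through the Kazhdan--Lusztig computation above, checking that with the paper's normalizations of $[A]$ and $\rho$ the possible sign of $\det$ is absorbed so that the surviving value is $1$, or more generally a power of $v$.
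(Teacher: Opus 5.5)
Your proposal is correct and follows the paper's proof. You factor $\phi^\m_{d+n,d}=(\chi\otimes\mathrm{id})\circ\rho$, take positivity of $\rho$ from Proposition \ref{pos-cmod} (the monomial twist is harmless), and use positivity of $\chi$ on the canonical basis of $\mathcal S_{n,n}$, which the paper simply cites from \cite[Proposition 3.3]{LW18}; your Kazhdan--Lusztig argument for that step is sound and gives $\chi(\{A\}_n)=\delta_{A,I}$ exactly, since $\chi(C'_s)=v^{-1}(-1+1)=0$ and $C'_sC'_w=(v+v^{-1})C'_w$ whenever $sw<w$, so no sign needs to be absorbed.
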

    \begin{proof}
    Recall that $ \phi^\m_{d+n, d}=(\chi \otimes \mathrm{id})\rho$. 
    Li and Wang prove the positivity of $\chi$ in \cite[Proposition 3.3]{LW18}. 
    The positivity of $\rho$ is ensured by Proposition \ref{pos-cmod}. 
    This proves the proposition
    \end{proof}

    It is known that the algebra $U$ is a Hopf algebra, with comultiplication $\Delta:U\to U\otimes U$ given on the generators by 
    \[\Delta(E_i) = E_i\otimes K_i+ 1\otimes E_i ,\quad\Delta(F_i) = F_i\otimes 1+K_i^{-1}\otimes F_i ,\quad\Delta(K_i^{\pm 1}) = K_i^{\pm 1}\otimes K_i^{\pm 1}.\]

Goyal and Rosso endowed $MU$ with a $U$-comodule algebra structure \cite{GR25}:
\begin{align*}
     \rho(K_{i}^{\pm 1})&=\Delta(K_{i}^{\pm 1})=K_{i}^{\pm 1}\otimes K_{i}^{\pm 1};\\
     \rho(E_{i})&=\Delta(E_{i})=E_{i}\otimes K_{i}+1\otimes E_i;\\
      \rho(F_{i})&=\Delta(F_{i})=F_{i}\otimes 1+K_{i}^{-1}\otimes F_i;\\
       \rho(L)&=1\otimes L.
\end{align*}

Recall the algebra homomorphism $\Phi_d:\muvgln\to {}_{\mbb{Q}(v)}\MS$ defined in \cite{FZM25}. 
This induces a surjective algebra homomorphism (still denoted by $\Phi_d$) $\Phi_d:\muvsln\to {}_{\mbb{Q}(v)}\MS$ 
by sending $E_i\to E_i$, $F_i\to F_i$, $L\to L$ and $K_i\to H_iH^{-1}_{i+1}$ for any $i\in [1, n-1]$. 
For $d=d'+d''$ we have the following commutative diagram.
\begin{center}
    \begin{tikzcd}
MU \arrow[d, "\Phi_d"'] \arrow[rr, "\rho"] &  & U\otimes MU \arrow[d, "\kappa_{d''}\otimes \Phi_{d'}"] \\
{}_{\mbb{Q}(v)}\MS \arrow[rr, "\rho"]                        &  & {}_{\mbb{Q}(v)}\Spp \otimes {}_{\mbb{Q}(v)}\MSp                                      
\end{tikzcd}
\end{center}
where $\kappa_{d''}:U\to \qSpp$ is the surjective algebra homomorphism from \cite{BLM90}.

Let $I=\{1,\cdots, n-1\}$. Recall that $U$ is a $\mbb Z[I]$-graded algebra by setting 
$$\deg E_i=(0,\ldots,0,\stackrel{i}{1},0,\ldots,0),\quad \deg F_i=(0,\ldots,0,\stackrel{i}{-1},0,\ldots,0),$$
$$\deg K_i=\deg K_i^{-1}=(0,\ldots,0)$$
for all $1\leq i\leq n-1$, i.e., 
$$
U=\bigoplus_{\nu\in \mbb Z[I]}U(\nu).
$$
In \cite{GR25}, Goyal and Rosso also show that the algebra $MU$ is $\mathbb{Z}[I]$-graded by further setting $\deg L=(0,\ldots,0)$.
Then we have 
$$
MU=\bigoplus_{\nu\in \mbb Z[I]}MU(\nu).
$$
Define an injective map $I\to \mbb X$ by sending $i\mapsto \bar{s}_i-\bar{s}_{i+1}$, where $s_i$ is the $i$-th standard basis element in $\mbb Z^{n}$.
Then $\mbb Z[I]$ can be seen as a subset of $\mbb X$ by this injective map.

For any triple $\nu, \nu', \nu''\in \mbb Z[I]$ such that $\nu=\nu'+\nu''$, we obtain
$$
\rho_{\nu', \nu''}: MU(\nu)\to U(\nu'')\otimes MU(\nu')
$$
by restricting $\rho$ to $MU(\nu)$ and then project to $U(\nu'')\otimes MU(\nu')$.
Moreover, restricting $\Phi_d$ to $MU(\nu)$ induces a linear map which will still be denoted by $\Phi_d$:
$$
\Phi_d: MU(\nu)\to \bigoplus_{\bar{\fb}-\bar{\fa}=\nu}\qMS(\fb,\fa).
$$
Similarly, we have
$$
\kappa_d: U(\nu)\to \bigoplus_{\bar{\fb}-\bar{\fa}=\nu}\qS(\fb,\fa).
$$

For $\nu=\nu'+\nu''$, taking the component gives the following commutative diagram.
\begin{center}
    \begin{tikzcd}
MU(\nu) \arrow[d, "\kappa^{\m}_d"'] \arrow[rr, "\rho_{\nu', \nu''}"] &  & U(\nu'')\otimes MU(\nu') \arrow[d, "\kappa_{d''}\otimes \kappa^{\m}_{d'}"] \\
\bigoplus_{\bar{\fb}-\bar{\fa}=\nu}\qMS(\fb,\fa) \arrow[rr, "\oplus \rho_{\fb',\fa',\fb'',\fa''}"]                        &  &\bigoplus^{\substack{\fb=\fb'+\fb'', \\ \fa=\fa'+\fa''}}_{\substack{\bar{\fb''}-\bar{\fa''}=\nu'', \\ \bar{\fb'}-\bar{\fa'}=\nu'}}\qSpp(\fb'',\fa'') \otimes \qMSp(\fb',\fa')            
\end{tikzcd}
\end{center}

Following \cite[23.1.5]{L93}, the comodule structure on $MU$ induces a $\dot{U}$-comodule structure on $M\dot{U}$. 
For any $\lam',\muu', \lam'',\muu''\in \mbb X$, let $\lam=\lam'+\lam''$ and $\muu=\muu'+\muu''$, there is a well-defined linear map 
\begin{align*}
\rho_{\lam',\muu', \lam'',\muu''}: {}_{\lam}MU_{\muu}\to {}_{\lam''}U_{\muu''} \otimes {}_{\lam'}MU_{\muu'}
\end{align*}
such that
\begin{align*}
\rho_{\lam',\muu', \lam'',\muu''}(\pi^{\m}_{\lam, \muu}(x))=(\pi_{\lam'', \muu''}\otimes \pi^{\m}_{\lam', \muu'})(\rho(x))
\end{align*}
for all $x\in MU$.

Therefore, for any $\lam-\muu=\nu$,  $\lam'-\muu'=\nu'$ and $\lam''-\muu''=\nu''$ 
we have the following commutative diagram when passing to the corresponding modified components.

\begin{center}
    \begin{tikzcd}
MU(\nu) \arrow[d, "\pi^{\m}_{\lam, \muu}"] \arrow[rr, "\rho_{\nu', \nu''}"] &  & U(\nu'')\otimes MU(\nu') \arrow[d, "\pi_{\lam'',\muu''}\otimes \pi^{\m}_{\lam',\muu'}"] \\
{}_{\lam}MU_{\muu} \arrow[rr, " \rho_{\lam',\muu',\lam'',\muu''}"]                        &  &{}_{\lam''}U_{\muu''} \otimes {}_{\lam'}MU_{\muu'} 
\end{tikzcd}
\end{center}

The main result of this section is to prove that the coefficient of the linear map $\rho_{\lam',\muu', \lam'',\muu''}$ is in $\mbb N[v, v^{-1}]$.

We consider that the homomorphism $\phi^{\m}_d$ induces a linear map that we still denote by $\phi^{\m}_d$:
$$
\phi^{\m}_d:{}_{\fbb}MU_{\faa}\to \qMS(\fb, \fa).
$$
Then by definition, if we take $\lam$ and $\muu\in \mbb X$ such that $\lam-\muu=\nu$, we obtain a commutative diagram
\begin{center}
    \begin{tikzcd}
MU(\nu) \arrow[d, "\kappa^{\m}_d"'] \arrow[rr, "\pi^{\m}_{\lam, \muu}"] &  & {}_{\lam}MU_{\muu} \arrow[d, "\phi^{\m}_{d}"] \\
\bigoplus_{\fbb-\faa=\nu}\qMS(\fb,\fa) \arrow[rr, " "]                        &  &\qMS(\lambda, \mu)
\end{tikzcd}
\end{center}

Finally, we combine the above four commutative diagrams and use a diagram chasing, we have the following commutative diagram.
\begin{center}
    \begin{tikzcd}
{}_{\lam}MU_{\muu}  \arrow[d, "\phi^{\m}_d"'] \arrow[rr, "\rho_{\lam',\muu',\lam'',\muu''}"] &  & {}_{\lam''}U_{\muu''} \otimes {}_{\lam'}MU_{\muu'} \arrow[d, "\phi_{d''}\otimes \phi^{\m}_{d'}"] \\
\qMS(\lambda,\mu) \arrow[rr, "\rho_{\lambda',\mu',\lambda'',\mu''}"]                        &  &\qSpp(\lambda'',\mu'') \otimes \qMSp(\lambda',\mu')            
\end{tikzcd}
\end{center}

\begin{thm}\label{pos-cmod-sln}
    Let $b_{\widehat{(A,\Delta)}}\in \mbf{M\dot{B}}$, we write
    $$
    \rho_{\lam',\muu',\lam'',\muu''}(b_{\widehat{(A,\Delta)}})=\sum_{b''\in \mathbf{\dot{B}}, b_{\widehat{(A',\Delta')}}\in \mathbf{M\dot{B}}} m_{\widehat{(A,\Delta)}}^{b'',\widehat{(A',\Delta')}}b''\otimes b_{\widehat{(A',\Delta')}}.
    $$
    Then $m_{\widehat{(A,\Delta)}}^{b'',\widehat{(A',\Delta')}}\in \mbb N[v, v^{-1}]$.
\end{thm}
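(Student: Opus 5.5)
The plan is to mirror the proof of Theorem \ref{p-module}: push the identity down to a single mirabolic $q$-Schur algebra via the final commutative diagram, and read off the coefficients from Proposition \ref{pos-cmod}. Fix $b_{\widehat{(A,\Delta)}}$ together with $\lam',\muu',\lam'',\muu''$. Only finitely many pairs $(b'',\widehat{(A',\Delta')})$ have $m_{\widehat{(A,\Delta)}}^{b'',\widehat{(A',\Delta')}}\neq 0$; call this set $\Omega$. By Corollary \ref{comparsion of cb}, each $b''\in\mathbf{\dot B}$ is of the form $b_{\widehat{(A'',\emptyset)}}$. We also use the analogue of Proposition \ref{im of cb-sln} for $\dot U(\slln)$ and $\Sd$ due to McGerty and Li--Wang: $\phi_{d''}(b'')=\{{}_{p''}A''\}_{d''}$ for large $p''$.

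Choose representatives with $|A|=d$, $|A'|=d'$ and $|A''|=d''$, where $d=d'+d''$ and $d',d''$ lie in the right residue classes mod $n$. Shift all three by multiples of $I$ so that, simultaneously for every element of the finite set $\Omega$ and for $(A,\Delta)$, Proposition \ref{im of cb-sln} applies. This is possible because $d'$ and $d''$ can each be increased by multiples of $n$ independently, and then $d$ follows. With these choices,
$$\phi^{\m}_d(b_{\widehat{(A,\Delta)}})=v^{-\vartheta(\Delta)\lfloor d/n\rfloor}\{A\}_{\Delta,d},\qquad \phi^{\m}_{d'}(b_{\widehat{(A',\Delta')}})=v^{-\vartheta(\Delta')\lfloor d'/n\rfloor}\{A'\}_{\Delta',d'},$$
and $\phi_{d''}(b'')=\{A''\}_{d''}$. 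We must also make sure that the basis elements in $\Omega$ have distinct nonzero images. This holds because distinct classes give distinct canonical basis elements, exactly as in the linear independence part of Theorem \ref{cb-sln}.

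Now apply $\phi_{d''}\otimes\phi^{\m}_{d'}$ to the expansion of $\rho_{\lam',\muu',\lam'',\muu''}(b_{\widehat{(A,\Delta)}})$. The last commutative diagram identifies the result with $\rho_{\lambda',\mu',\lambda'',\mu''}\bigl(v^{-\vartheta(\Delta)\lfloor d/n\rfloor}\{A\}_{\Delta,d}\bigr)$. Expand the right-hand side with the coefficients $c^{A'',(A',\Delta')}_{A,\Delta}$ and compare coefficients of $\{A''\}_{d''}\otimes\{A'\}_{\Delta',d'}$. This gives
$$m_{\widehat{(A,\Delta)}}^{b'',\widehat{(A',\Delta')}}=v^{\vartheta(\Delta')\lfloor d'/n\rfloor-\vartheta(\Delta)\lfloor d/n\rfloor}\,c^{A'',(A',\Delta')}_{A,\Delta}.$$
By Proposition \ref{pos-cmod} the right-hand side lies in $\mathbb N[v,v^{-1}]$, since multiplying by a power of $v$ preserves positivity. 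The normalization $v^{\sum b_i'b_j''-a_i'a_j''}$ in $\rho$ versus $\tilde\rho$ is also just a power of $v$, so it causes no trouble.

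The main obstacle is making the diagram chase legitimate. The bottom row uses the normalized $\rho_{\fb',\fa',\fb'',\fa''}$, and we need the square with $\phi_{d''}\otimes\phi^{\m}_{d'}$ to commute exactly at the chosen $d,d',d''$ and weights $\lambda=\lambda'+\lambda''$, $\mu=\mu'+\mu''$. My first attempt is to verify this on the generators $E_i,F_i,K_i^{\pm1},L$: these generate, the two routes are compatible with the module structure by the proposition that $\rho$ is an $\Sd$-module homomorphism, and $\rho(L)=1\otimes L'$. The remaining point is that the finitely many large shifts can be chosen compatibly; I would handle this by taking the maximum of the thresholds over $\Omega$.
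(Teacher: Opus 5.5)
Your proposal is correct and follows the paper's proof. Both choose $d=d'+d''$ large enough for the finitely many terms in $\Omega$, push the expansion through $\phi_{d''}\otimes\phi^{\m}_{d'}$ using the final commutative square, and read off $m_{\widehat{(A,\Delta)}}^{b'',\widehat{(A',\Delta')}}$ as a power of $v$ times $c^{A'',(A',\Delta')}_{A,\Delta}$, which is positive by Proposition \ref{pos-cmod}. Your exponent $\vartheta(\Delta')\lfloor d'/n\rfloor-\vartheta(\Delta)\lfloor d/n\rfloor$ is the right one, and your check that distinct pairs in $\Omega$ have distinct nonzero images spells out a step the paper leaves implicit.

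One caution concerns your suggested check of the square on generators, which the paper simply asserts. Left $\Sd$-linearity of $\rho$ together with $\rho(L)=1\otimes L'$ does not cover words with $L$ in the interior, such as $LE_1$ or $LE_1L$. What you need is that the finite $\rho$ is multiplicative on all of $\MS$. This follows from the same computation as in Proposition \ref{cm-hom} with $h_1\in\MS$: use linearity of $\pi'$ on vectors and base-point independence, and note that the normalizing exponent of $\rho$ is additive.
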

\begin{proof}
    Let $\Omega=\{(b'', b_{\widehat{(A',\Delta')}})\ |\ m_{\widehat{(A,\Delta)}}^{b'',\widehat{(A',\Delta')}}\neq 0 \}$, it is clear that $\sharp \Omega<\infty$.
    For $\widehat{(A,\Delta)}$, $\widehat{(A',\Delta')}\in \widehat{\Xi}$,  Proposition \ref{im of cb-sln} shows, we can find some large enough integers $d, d', d''$ with $d=d'+d''$ 
    such that for any $(b'', b_{\widehat{(A',\Delta')}})\in \Omega$
    $$
    \phi_{d''}(b'')=\{B\}_{d''}, \quad \phi^{\m}_d(b_{\widehat{(A,\Delta)}})=v^{-\vartheta(\Delta)\lfloor \frac{d}{n}\rfloor}\{A\}_{\Delta,d}, \quad \phi^{\m}_{d'}(b_{\widehat{(A',\Delta')}})=v^{-\vartheta(\Delta')\lfloor \frac{d'}{n}\rfloor}\{A'\}_{\Delta',d'}.
    $$
    where $\{A\}_{\Delta,d}$, $\{A'\}_{\Delta',d'}$ are certain canonical basis elements in the mirabolic $q$-Schur algebra and $\{B\}_{d''}$ is a certain canonical basis element in $\Spp$.
    Then we have
    \begin{align*}
     \phi_{d''}  \otimes \phi^{\m}_{d'}(\rho_{\lam',\muu',\lam'',\muu''}(b_{\widehat{(A,\Delta)}})=&\sum_{(b'',b_{\widehat{(A',\Delta')}})\in \Omega} 
    m_{\widehat{(A,\Delta)}}^{b'',\widehat{(A',\Delta')}}\phi_{d''}(b'')\otimes \phi^{\m}_{d'}(b_{\widehat{(A',\Delta')}})\\
    =& \sum_{(b'',b_{\widehat{(A',\Delta')}})\in \Omega} 
    m_{\widehat{(A,\Delta)}}^{b'',\widehat{(A',\Delta')}}\{B\}_{d''}\otimes v^{-\vartheta(\Delta')\lfloor \frac{d'}{n}\rfloor}\{A'\}_{\Delta',d'}\\
    =&v^{-\vartheta(\Delta)\lfloor \frac{d'}{n}\rfloor}\rho_{\lambda',\mu',\lambda'',\mu''}(\{A\}_{\Delta,d})\\
    \end{align*}
    Then Proposition \ref{pos-cmod} shows that $m_{\widehat{(A,\Delta)}}^{b'',\widehat{(A',\Delta')}}\in \mbb N[v, v^{-1}]$.
\end{proof}

With this theorem, we have a direct corollary.
\begin{cor}
    The algebra homomorphism $\phi^{\m}_d:\mmuvsln\to \MS$ sends each canonical basis element to a linear combinations of canonical basis elements with coefficients in $\mbb N[v, v^{-1}]$ or zero.
\end{cor}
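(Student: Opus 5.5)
The plan is to deduce the corollary by combining Theorem~\ref{pos-cmod-sln} with the description of $\phi^{\m}_d$ on canonical basis elements from Proposition~\ref{im of cb-sln}. The one subtlety is that Proposition~\ref{im of cb-sln} only identifies $\phi^{\m}_d(b_{\widehat{(A,\Delta)}})$ when the chosen representative has $|A|=d$ large. For a fixed $d$ and an arbitrary class, the image of $b_{\widehat{(A,\Delta)}}$ under $\phi^{\m}_d$ is therefore not directly known. I would handle this by factoring $\phi^{\m}_d$ through a large $\phi^{\m}_{D}$ followed by a chain of transfer maps, and then use positivity of the transfer map.

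\textbf{Step 1: reduction to a single block.} Fix $b=b_{\widehat{(A,\Delta)}}$. If $|\bar{A}|\neq \bar d$ in $\mathbb Z/n\mathbb Z$, then $\phi^{\m}_d(b)=0$, because $\phi^{\m}_d$ factors through the projection $\pi^{\m}_{\bar d}$.

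\textbf{Step 2: the main case.} Otherwise, choose the representative with $|A|=D=d+nk$ for some $k\gg0$, so that Proposition~\ref{im of cb-sln} applies:
\[
\phi^{\m}_D(b)=v^{-\vartheta(\Delta)\lfloor D/n\rfloor}\{A\}_{\Delta,D}.
\]
Iterating the lower triangle of diagram \eqref{inverse-sys}, I would obtain
\[
\phi^{\m}_d=\phi^{\m}_{d+n,d}\circ\cdots\circ\phi^{\m}_{D,D-n}\circ\phi^{\m}_D .
\]
This needs to be checked on all of $\mmuvsln$ and not only on the image of $\varrho$; since $\varrho$ is surjective, that check is immediate.

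\textbf{Step 3: positivity.} By Proposition~\ref{pos-trans}, each transfer map sends a canonical basis element either to zero or to an $\mathbb N[v,v^{-1}]$-combination of canonical basis elements. Composition preserves this property. Hence $\phi^{\m}_d(b)$ equals $v^{-\vartheta(\Delta)\lfloor D/n\rfloor}$ times such a combination. A power of $v$ lies in $\mathbb N[v,v^{-1}]$, so $\phi^{\m}_d(b)$ is a nonnegative combination of the $\{A'\}_{\Delta',d}$, or zero.

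Alternatively, the same conclusion can be reached through Theorem~\ref{pos-cmod-sln}: apply the counit-type specialization $\chi$ iteratively, as in the definition of the transfer map, to the comodule coefficients. I would present the transfer-map argument as the main proof and note the comodule interpretation only as a remark.

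\textbf{Main obstacle.} The delicate point is Step 2, namely justifying that the chain of transfer maps composed with $\phi^{\m}_D$ equals $\phi^{\m}_d$ on every class, including classes whose representative at level $d$ is not in $\Xi_{n|n,d}$. This is exactly the content of the commutativity of \eqref{inverse-sys}, which I would invoke directly. I would also double-check that the $v^{h}$ factors from Corollary~\ref{t-map f bas} only contribute monomial powers of $v$ and therefore do not affect positivity.
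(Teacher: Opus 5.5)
Your proposal is correct and is essentially the paper's own argument. You discard classes outside the block $\bar d$, where $\phi^{\m}_d$ vanishes; you lift to a large level $D=d+nk$, where Proposition~\ref{im of cb-sln} gives a power of $v$ times $\{A\}_{\Delta,D}$; you descend to level $d$ through the chain of transfer maps from the lower triangle of \eqref{inverse-sys}; and you conclude with Proposition~\ref{pos-trans}, exactly as the paper does, so the comodule route via Theorem~\ref{pos-cmod-sln} that you mention as a remark is not needed.
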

\begin{proof}
    Take $b_{\widehat{(A,\Delta)}}\in \mathbf{M\dot{B}}$, we may assume $b_{\widehat{(A,\Delta)}}\in \mathbf{M\dot{B}}\langle\bar{d}\rangle$, where $ \mathbf{M\dot{B}}\langle\bar{d}\rangle$ is the basis of $\mmuvsln\langle \bar{d}\rangle$, otherwise we have $\phi^{\m}_d(b_{\widehat{(A,\Delta)}})=0$. 
    By Proposition \ref{im of cb-sln}, we have $\phi^{\m}_{d+np}(b_{\widehat{(A,\Delta)}})$ is a canonical basis element in $\mathcal{MS}_{n,d+np}$ times a power of $v$ for large enough $p$.
    Then using \eqref{inverse-sys}, we have
    $$
    \phi^{\m}_d(b_{\widehat{(A,\Delta)}})=\phi^{\m}_{d+n, d}\circ \cdots \circ \phi^{\m}_{d+np, d+np-n}(\phi^{\m}_{d+np}(b_{\widehat{(A,\Delta)}})).
    $$
    Then applying Proposition \ref{pos-trans} and Proposition \ref{im of cb-sln}, the corollary follows.
\end{proof}

\bibliographystyle{alpha}
\bibliography{positivity}
\end{document}